\renewcommand{\ITT}{\textsf{ITT}}
\newcommand{\ETT}{\textsf{ETT}}
\newcommand{\Ext}{\textsf{Ext}}
\newcommand{\ext}{\textsf{ext}}
\newcommand{\UIP}{\textsf{UIP}}
\renewcommand{\CxlCat}{\textsf{CxlCat}}
\renewcommand{\CwA}{\textsf{CwA}}
\DeclareMathOperator{\ft}{ft}
\let\Ty\undefined\DeclareMathOperator{\Ty}{Ty}
\renewcommand{\pair}{\textsf{pair}}
\newcommand{\sigmarec}{\textsf{split}}
\newcommand{\app}{\textsf{app}}
\renewcommand{\Id}{\mathsf{Id}}
\newcommand{\unitrec}{\textsf{unitrec}}
\renewcommand{\Type}{\textsf{Type}}
\renewcommand{\Term}{\textsf{Term}}
\renewcommand{\Ctx}{\textsf{Ctx}}
\newcommand{\tosimeq}{\xrightarrow{\simeq}}
\newcommand{\nameditem}[1]{%
\item[#1]\protected@edef\@currentlabel{#1}%
}
\DeclareMathOperator{\core}{core}
\newcommand{\Norm}{\mathsf{Norm}}
\newcommand{\Nf}{\mathsf{Nf}}
\renewcommand{\Ty}{\mathsf{Ty}}
\renewcommand{\Id}{\mathsf{Id}}
\newcommand{\NCwA}{\mathsf{NCwA}}
\newcommand{\sz}{\mathsf{sz}}
\title{Extensional concepts in intensional type theory, revisited}
\author{Krzysztof Kapulkin and Yufeng Li}
\begin{abstract}
  Revisiting a classic result from M.~Hofmann's dissertation, we give a direct
  proof of Morita equivalence, in the sense of V.~Isaev, between extensional
  type theory and intensional type theory extended by the principles of
  functional extensionality and of uniqueness of identity proofs.
\end{abstract}
\begin{document}

\maketitle

\section*{Introduction}

In his Ph.D.~dissertation \cite{hofmann:extensional-concepts,hofmann:extensional-constructs}, Hofmann constructs an interpretation of extensional type theory in intensional type theory, subsequently proving a \emph{conservativity} result of the former over the latter extended by the principles of functional extensionality and of uniqueness of identity proofs (UIP), cf.~\cite[\S3.2]{hofmann:extensional-concepts}.
Interestingly, Hofmann's proof is ``stronger'' than the statement of his theorem, as the requisite language in which to speak of conservativity and equivalence of dependent type theories did not exist at the time.

A major insight came as a result of the development of homotopy type theory.
In particular, Voevodsky's definition, ca.~2009, of when a morphism in a model of dependent type theory is a \emph{weak equivalence} allows one to consider different homotopy-theoretic structures both within such models and the categories thereof.

In \cite{kapulkin-lumsdaine-18}, it was observed that the category of models of a dependent type theory carries the structure of a left semi-model category.
This structure was subsequently used by Isaev \cite{isaev18} to define a \emph{Morita equivalence} of dependent type theories.
In essence, two theories are Morita equivalent if their categories of models are equivalent in a suitable ``up-to-homotopy'' sense.
More precisely, a Morita equivalence between theories is a translation between them that induces a Quillen equivalence (the correct notion of equivalence for left semi-model structures) between their left semi-model categories of models.
Perhaps unsurprisingly, Isaev cites Hofmann's theorem as one of the motivating examples behind his definition, without actually proving it to be one.

In the present work, we give a direct proof of Morita equivalence between extensional type theory and intensional type theory by principles of functional extensionality and of uniqueness of identity proofs.
While Hofmann proves that the initial models of these theories are suitably equivalent, we generalize this result to all possible extensions of the base theories by types and terms, including propositional equalities.
In the homotopy-theoretic terms, these are exactly the \emph{cofibrant} extentions.

Therefore, thanks to proving Morita equivalence, one does not need to prove an analogue of Hofmann's result for any new extension but instead appeal to our result addressing all extensions once and for all.
As new variants and extensions of intensional type theory are constantly proposed, this reduces the burden of proving their expected properties by making what should be formal formal.

\subsection*{Outline}
Our proof follows Hofmann's quite closely but requires a major innovation to account for all possible (cofibrant) extensions.
In \cite{hofmann:extensional-constructs}, Hofmann describes a classs of context morphisms termed \emph{propositional isomorphisms} by inspecting the outer-most type former in (the last type of) each context and collapses these maps to identities, thus obtaining a functor from the syntactic model of intensional type theory to that of extensional type theory.
Our general approach is identical: in \cref{sec:w-ett}, we describe the \emph{extensional kernel} of a cofibrant model of intensional type theory, which we then collapse in \cref{sec:c-ett} to obtain a (cofibrant) model of extensional type theory.
The verification that the construction indeed produces a model of extensional type theory and yields Morita equivalence between the theories is presented in \cref{sec:c-ett-logic} and \cref{sec:c-ett-lift}, respectively.

A major innovation is required in \cref{sec:cofib}.
Because of working with syntactic categories, Hofmann is able to inspect the outer-most constructor in each context.
This property need not hold in every model, but it does hold in the cofibrant ones (which is proven in \cref{sec:cofib}), which is therefore sufficient to follow the remainder of Hofmann's strategy to establish a Morita equivalence.

We give a more detailed overview of the proof in \Cref{sec:outline}, including a detailed comparison with Hofmann's proof, after reviewing the necessary background  in \Cref{sec:background}.

\subsection*{Related work}
While the concept of a Morita equivalence in mathematics goes back a long way, it is a relatively new concept in the context of dependent type theories, cf.~\cite{isaev18}.
The only literature on the topic that we are aware of is \cite{bocquet:coherence}, where Bocquet developed an impressively general theory for proving the existence of Morita equivalence in situations where extensional principles are involved.
Our approach, in contrast, is direct and entirely self-contained, although the result can be deduced from Bocquet's.

\subsubsection*{Acknowledgements}
We thank the anonymous reviewers for their insightful comments that identified
an error in the original submission and their suggestions that helped improve
the quality of presentation of the manuscript.
We would also like to thank Zhixuan Yang for various helpful comments regarding
\Cref{sec:cofib}.


\section{Background} \label{sec:background}
This section reviews the required background on
the homotopical structure of the category of models of a given type
theory.
We briefly recall the definition of a contextual category as well as
their underlying homotopy-theoretic properties and give a definition of what it
means for two type theories to be Morita equivalent.
Our presentation of contextual categories follows
\cite{kapulkin-lumsdaine-18,simplicial-uf}, whereas the definition of Morita equivalence is then a direct translation of \cite{isaev18} to our setting.

\subsection{Contextual Categories}
A model of type theory in the present context is taken to be a
\emph{contextual category}, defined as follows.

\begin{definition}[Contextual Category]
  A contextual category $\bC$ consists of a category $\bC$ along with the
  following data:
  \begin{itemize}
    \item A grading on objects
    $\ob\bC = \coprod_{n \in \bN} \ob_n{\bC}$.
    \item An object $1 \in \ob_0{\bC}$.
    \item A \emph{father} function
    $\ft_n \colon \ob_{n+1}\bC \to \ob_n\bC$ along with
    \emph{dependent projection}
    $\pi_\Gamma \colon \Gamma \twoheadrightarrow \ft{\Gamma}$ for each
    $n \in \bN$ and $\Gamma \in \ob_{n+1}\bC$.
    \item For each
    $\Gamma \in \ob_{n+1}\bC$ and $f \colon \Delta \to \ft{\Gamma}$,
    an object $f^*\Gamma$ along
    with the \emph{connecting map}
    $f.\Gamma \colon f^*\Gamma \to \Gamma$.
  \end{itemize}
  subject to the following conditions:
  \begin{itemize}
    \item The empty context $1 \in \ob_0\bC$ is
    terminal and the unique object in $\ob_0\bC$.
    \item For all
    $f \colon \Delta \to \ft\Gamma$, the object $f^*\Gamma$ is such that
    $\ft{f^*\Gamma} = \Delta$ and the following square
    \begin{equation*}
      \begin{tikzcd}
        f^*\Gamma
        \ar[d, two heads, "\pi_{f^*\Gamma}"']
        \ar[r, "f.\Gamma"]
        \ar[rd, phantom, "\lrcorner" {pos=0}]
        & \Gamma \ar[d, two heads, "\pi_\Gamma"] \\
        \Delta \ar[r, "f"] & \ft\Gamma
      \end{tikzcd}
    \end{equation*}
    is a pullback.
    Further, these operations are strictly functorial, i.e.,
    \begin{itemize}
      \item $\id^*\Gamma = \Gamma$ and
      $\id.\Gamma = \id$.
      \item For $g \colon \Theta \to \Delta$
      and $f \colon \Delta \to \ft{\Gamma}$ as above, the following
      equalities hold:
      \begin{align*}
        (fg)^*\Gamma = g^*f^*\Gamma &&
        (fg).\Gamma = g.f^*\Gamma \cdot f.\Gamma
      \end{align*}
    \end{itemize}
  \end{itemize}
\end{definition}
When working with contextual categories, dependent projections will be denoted by the two-headed arrow, e.g., $\pi_\Gamma \colon \Gamma \twoheadrightarrow \ft{\Gamma}$ for $\Gamma \in \ob_{n+1}\bC$.
Moreover, we will often omit the subscript, writing $\pi \colon \Gamma \twoheadrightarrow \ft{\Gamma}$ instead of $\pi_\Gamma$.

For $\Gamma \in \ob_n\bC$, we write $\Ty(\Gamma)$ for the set of objects
$A \in \ob_{n+1}\bC$ such that $\ft{A} = \Gamma$.
We refer to the elements of $\Ty\Gamma$ as
the \emph{types in context $\Gamma$}.
When $\ft{A} = \Gamma$, we write $\Gamma.A$ for $A$.
In this case, for $f \colon \Delta \to \Gamma$, the
canonical substitution pullback is denoted
\begin{equation*}
  \begin{tikzcd}
    \Delta.f^*A
    \ar[d, two heads, "\pi_{f^*A}"']
    \ar[r, "f.A"]
    \ar[rd, phantom, "\lrcorner" {pos=0}]
    & \Gamma.A \ar[d, two heads, "\pi_A"] \\
    \Delta \ar[r, "f"] & \Gamma
  \end{tikzcd}
\end{equation*}
For $A \in \Ty\Gamma$, we often write $\Gamma.A.A$ to mean
$\Gamma.A.\pi^*A$, omitting the explicit weakening substitution
$\pi^*$ for readability.
Furthermore, the set $\Tm_\Gamma A$ consists of sections
$\Gamma \to \Gamma.A$ of the projection
$\Gamma.A \twoheadrightarrow \Gamma$, and is referred to as the
\emph{terms of $A$}.

More generally, if $\Theta \in \ob_{n+m}{\bC}$ is such that
$\ft^m{\Theta} = \Gamma$ then $\Theta$ is written as $\Gamma.\Theta$ for some
$\Theta$ depending on $\Gamma$ so that there is a chain of $m$ composition of
projections $\Gamma.\Theta \twoheadrightarrow \cdots \twoheadrightarrow \Gamma$.
For $f \colon \Delta \to \Gamma$, the $m$-times vertical composition
of the substitution pullbacks gives rise to the canonical pullback of
$\Gamma.\Theta \twoheadrightarrow \Gamma$ along $f$ as
$\Delta.f^*\Theta$ with connecting map $f.\Theta$.

Contextual categories can be equipped with additional structure corresponding to
the logical structure present in type theory, e.g., $\Pi$-types (or
$\Pi_{\Ext}$-types, if we are additionally assuming function extensionality),
$\Sigma$-types, or $\Id$-types, as defined in \cite[Appendix B]{simplicial-uf}.
Denote by $\CxlCat$ the category of contextual categories and $\CxlCat_{\ITT}$
the category consisting of contextual categories equipped with $\Id$-,
$\Pi_{\Ext}$ and $\Sigma$-types as objects and maps $F \colon \bC \to \bD$
between such contextual categories are those maps that preserve all structure of
contextual categories (the grading, the empty context, the context projections
and the substitution pullbacks) and the logical structures.

\subsection{Uniqueness of Identity Proofs and Extensional Type Theories}\label{subsec:uip-ett}
Contextual categories equipped with
$\Id$-type structure also give rise to a way of formalising the
\emph{uniqueness of identity proofs (UIP)} and \emph{equality
  reflection} rules.

For the rest of this section, let $\bC \in \CxlCat_{\ITT}$ be a
contextual category with structures corresponding to $\Id$ types.
We will make extensive use of Garner's identity contexts \cite[Proposition 3.3.1]{garner09}.
For each context extension
$\Gamma.\Theta \in \ob_{n+m}{\bC}$ of
$\Gamma \in \ob_n{\bC}$ there is an \emph{identity context}
$\Gamma.\Theta.\Theta.\Id_\Theta$ with corresponding reflexivity
constructors and elimination rules generalising those of identity
types $\Gamma.A.A.\Id_A$ for types $A \in \Ty\Gamma$.

\begin{definition}[{\cite[Definition 2.6]{kapulkin-lumsdaine-18}}]\label{def:kl-2.6}
  Given $f,g \colon \Gamma \to \Delta \in \bC$, a \emph{type-theoretic
    homotopy} $e \colon f \simeq g$ is a factorisation $e$ as follows:
  \begin{equation*}
    \begin{tikzcd}
     & \Delta.\Delta.\Id_\Delta \ar[dr, two heads] & \\
      \Gamma \ar[rr, "{(f,g)}"]  \ar[ru, "e"]
      & & \Delta.\Delta
    \end{tikzcd}
  \end{equation*}
\end{definition}
\begin{definition}[$\UIP$]\label{def:uip}
  The structure of \emph{uniqueness of identity proofs ($\UIP$)} on
  contextual category $\bC$ consists of an assignment, for each pair
  of sections $s_1,s_2 \colon \Gamma.A.A \to \Gamma.A.A.\Id_A$, a
  choice of a section $\UIP_A(s_1,s_2) \colon s_1 \simeq s_2$, stably in $\Gamma$, i.e., for all $f \colon \Delta \to \Gamma$, we have
  \begin{align*}
    f^*\UIP_A(s_1,s_2) = \UIP_{f^*A}(f^*s_1,f^*s_2)
    \colon f^*s_1 \simeq f^*s_2
  \end{align*}
\end{definition}
The category $\CxlCat_{\ITT+\UIP}$ has as objects contextual categories $\bC \in \CxlCat_\ITT$
along with an $\UIP$-structure on $\bC$ and maps $\bC \to \bD$ are
those functors in $\CxlCat_\ITT$ that additionally preserve the $\UIP$-structure.

\begin{definition}[Equality Reflection]\label{def:eq-reflect}
  The contextual category $\bC$ supports the \emph{equality
    reflection} rule when, for any two sections
  $f, g \colon \Gamma \to \Gamma.A$, if there is a homotopy
  $e \colon f \simeq g$, then $f = g$.
\end{definition}
The category $\CxlCat_{\ETT}$ is the full subcategory of
$\CxlCat_{\ITT}$ spanned by those contextual categories satisfying
equality reflection.

Note that if $\bC$ supports the equality reflection rule, then it also
admits an $\UIP$-structure by taking each $\UIP_A(s_1,s_2)$ in
\Cref{def:uip} as the reflexivity map.
This gives rise to a forgetful functor
$\abs{-} \colon \CxlCat_\ETT \to \CxlCat_{\ITT+\UIP}$ that is identity
on objects and maps.

\subsection{Fibration Categories}
Objects of the categories $\CxlCat_\ITT$, $\CxlCat_{\ITT+\UIP}$ and
$\CxlCat_{\ETT}$ admit structures of a fibration category, as defined
in this section.

\begin{definition}[{\cite[\S{I.1}]{brown73}}]
  A fibration category $\bC$ consists of a category $\bC$ with two
  wide subcategories: $\McF$, whose morphisms are called \emph{fibrations}, and $\McW$, whose morphisms are called
  \emph{weak equivalences}, subject to the following conditions:
  \begin{itemize}
    \item $\McW$ satisfies the 2-of-6 property.
    \item All isomorphisms are trivial fibrations, which are maps that
    are both weak equivalences and fibrations.
    \item Pullbacks along fibrations exist.
    Fibrations and trivial fibrations are closed under pullbacks.
    \item There is a terminal object $1 \in \bC$.
    All objects are fibrant, in the sense that each $\Theta \to 1$ is a fibration.
    \item All maps can be factored as a weak equivalence followed by a
    fibration.
  \end{itemize}
  When $\McW$ is clear from context, we use the same notation as in
  \Cref{def:kl-2.6} and write $w \colon \Gamma_1 \simeq \Gamma_2$ to mean
  $w \in \McW$.
\end{definition}

\begin{theorem}[{\cite[Theorem 3.2.5]{akl15}}]
  Fix $\bC \in \CxlCat_\ITT$ (respectively,
  $\CxlCat_{\ITT+\UIP}, \CxlCat_{\ETT}$).
  Put $\McW$ to be those $w \colon \Gamma \to \Delta \in \bC$ with
  $g_1,g_2 \colon \Delta \to \Gamma$ such that there are homotopies
  $e_1 \colon g_1w \simeq \id$ and $e_2 \colon wg_2 \simeq \id$.
  Put $\McF$ to be those maps isomorphic to compositions of
  projections.
  Then $\bC$ along with $\McW$ and $\McF$ forms a fibration category.
  \qed
\end{theorem}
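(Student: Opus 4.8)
The plan is to verify the five axioms of a fibration category directly from the given definitions of $\McW$ and $\McF$, treating the three cases $\CxlCat_\ITT$, $\CxlCat_{\ITT+\UIP}$ and $\CxlCat_\ETT$ uniformly. The foundational step, on which everything else rests, is to establish that type-theoretic homotopy is a \emph{congruence}: using the reflexivity constructors and the elimination rules of Garner's identity contexts, one shows that $\simeq$ is reflexive, symmetric and transitive on each hom-set and compatible with pre- and post-composition, so that homotopies may be whiskered and, in particular, precomposed with (hence pulled back along) arbitrary morphisms. This lets us form the homotopy category $\mathrm{Ho}(\bC) := \bC/{\simeq}$ together with the identity-on-objects quotient functor $\gamma \colon \bC \to \mathrm{Ho}(\bC)$.

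With $\gamma$ in hand, I would characterise the weak equivalences as
\[ \McW = \gamma^{-1}(\text{the isomorphisms in } \mathrm{Ho}(\bC)). \]
Indeed, if $w \in \McW$ with $g_1 w \simeq \id$ and $w g_2 \simeq \id$, then $\gamma(g_1)$ and $\gamma(g_2)$ are respectively a left and a right inverse of $\gamma(w)$, so $\gamma(w)$ is an isomorphism; conversely, since $\gamma$ is surjective on morphisms, any inverse of $\gamma(w)$ lifts to a two-sided homotopy inverse. As isomorphisms satisfy the $2$-of-$6$ property in any category and $\gamma$ is a functor, this characterisation yields axiom (1). For axiom (2), an isomorphism lies in $\McW$ (take $g_1 = g_2 = w^{-1}$ with reflexivity homotopies) and is isomorphic to an identity, i.e.\ to the empty composite of projections, hence lies in $\McF$; thus every isomorphism is a trivial fibration. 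Axiom (4) is immediate: the empty context $1$ is terminal, and each $\Theta \to 1$ is the canonical chain $\Gamma.\Theta \twoheadrightarrow \cdots \twoheadrightarrow 1$ of projections, hence a fibration.

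For the fibration half of axiom (3), composites of projections are closed under composition, and their pullbacks along arbitrary maps exist and are again composites of projections by the strictly functorial substitution structure of a contextual category; transporting along isomorphisms shows that $\McF$ admits pullbacks along its members and is closed under them. The genuinely delicate point is the stability of \emph{trivial} fibrations under pullback. Here I would first prove a rectification lemma: a fibration $p$ that is a weak equivalence admits a genuine section $s$ together with a homotopy $s p \simeq \id$, the right homotopy inverse supplied by $\McW$ being straightened to an on-the-nose section using that $p$ is a composite of projections. Both the section and the witnessing homotopy are preserved by the substitution pullback, using the precomposition-stability of $\simeq$ from the first step; consequently the pulled-back fibration again carries such data and is a weak equivalence. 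This rectification-plus-stability argument is the main obstacle, as it is where the interaction between the strict $1$-categorical pullbacks and the up-to-homotopy notion of equivalence must be controlled.

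Finally, for the factorisation axiom (5), given $f \colon \Gamma \to \Delta$ I would use the mapping path construction. The source projection $\mathrm{src} \colon \Delta.\Delta.\Id_\Delta \twoheadrightarrow \Delta$ is a trivial fibration: it is a composite of projections, its section is reflexivity $\refl$, and $\refl \cdot \mathrm{src} \simeq \id$ by path induction on the identity context. Pulling $\mathrm{src}$ back along $f$ yields the mapping path object $N_f$ with a trivial fibration $q \colon N_f \twoheadrightarrow \Gamma$ (by the stability just established) and a fibration $N_f \twoheadrightarrow \Delta$ obtained by composing the connecting map with the target projection $\mathrm{tgt}$. Reflexivity induces a section $j \colon \Gamma \to N_f$ of $q$, which is a weak equivalence by the $2$-of-$3$ property following from the characterisation in axiom (1), and composing $j$ with $N_f \twoheadrightarrow \Delta$ recovers $f$ since $\mathrm{tgt} \cdot \refl = \id$. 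This exhibits $f$ as a weak equivalence followed by a fibration and completes the verification. The three cases require no separate treatment: in the presence of $\UIP$ or equality reflection the homotopy relation only becomes coarser, and every step above goes through verbatim.
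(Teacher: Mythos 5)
The paper itself quotes this theorem from \cite{akl15} with no internal proof, so the benchmark is the argument given there; your outline follows the same architecture (homotopy as a congruence via Garner's identity contexts, $\McW$ as the preimage of the isomorphisms in $\Ho\bC$, the 2-of-6 property inherited from isomorphisms, Brown's mapping-path-space factorisation). However, there is a genuine gap at the step you yourself flag as the crux: pullback stability of trivial fibrations. Your rectification lemma produces a section $s$ of $p \colon E \twoheadrightarrow B$ together with a homotopy $H \colon sp \simeq \id_E$, and you claim this data is ``preserved by the substitution pullback, using the precomposition-stability of $\simeq$.'' That mechanism fails: precomposing $H$ with the connecting map $f.E \colon f^*E \to E$ yields a homotopy $sp \cdot f.E \simeq f.E$ between maps into $E$, not a homotopy $f^*s \cdot q \simeq \id_{f^*E}$ (where $q \colon f^*E \twoheadrightarrow A$ is the pulled-back fibration), and a general homotopy into $E$ does not factor through the pullback. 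What pulls back strictly is \emph{vertical} data: writing $E = B.\Theta$, one needs the contracting homotopy to be a homotopy \emph{over} $B$, i.e.\ a section of the identity context of $\Theta$ formed in context $B.\Theta.\Theta$, which substitution then carries to $f^*E$ on the nose. Producing such a fibrewise contraction from a two-sided homotopy inverse is precisely the content of the key lemma behind \cite[Theorem 3.2.5]{akl15} (acyclic fibrations coincide with maps admitting a section and a fibrewise contraction), and it needs a further argument with transport/$\MsJ$ beyond straightening the section: after straightening, $pH$ is a possibly nontrivial self-homotopy of $p$, which without $\UIP$ you cannot discard, so $H$ itself must be corrected using the fibration structure again. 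As written, your stability claim does not follow from the data you establish.

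A smaller but real gap occurs in the factorisation axiom: you assert that $N_f \to \Delta$, the composite of the connecting map $N_f \to \Delta.\Delta.\Id_\Delta$ with the target projection, is a fibration. Connecting maps are not fibrations in general (they are pullbacks of $f$, not of projections), so this composite being in $\McF$ requires justification: either exhibit $N_f$ as the pullback of the fibration $\Delta.\Delta.\Id_\Delta \twoheadrightarrow \Delta.\Delta$ along $f \times \id \colon \Gamma \times \Delta \to \Delta \times \Delta$ and compose two fibrations, or argue contextually via the exchange isomorphism rewriting $(x{:}\Gamma,\, y{:}\Delta,\, u{:}\Id_\Delta(fx,y))$ as $(y{:}\Delta,\, x{:}\Gamma,\, u{:}\Id_\Delta(fx,y))$, which exhibits $N_f \to \Delta$ as isomorphic to a composite of projections. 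The remainder of your outline --- the congruence properties of homotopy, the characterisation $\McW = \gamma^{-1}(\mathrm{iso})$, fibrancy of all objects, and the observation that the three cases $\ITT$, $\ITT+\UIP$, $\ETT$ need no separate treatment --- is sound and matches the standard proof.
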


\begin{definition}\label{def:glue}
  Let $\bC \in \CxlCat_{\ITT+\UIP}$ (respectively, $\CxlCat_{\ETT}$).
  Take $w_\Gamma \colon \Gamma_1 \simeq \Gamma_2$ and
  $w_A \colon \Gamma_1.A_1 \simeq \Gamma_2.A_2$ and
  $w_\Delta \colon \Delta_1 \simeq \Delta_2$ and let there be maps
  $f_i \colon \Delta_i \to \Gamma_i$ for $i=1,2$ as below such that there is a
  homotopy $H \colon w_\Gamma \cdot f_1 \simeq f_2 \cdot w_\Delta$.
  \begin{equation*}
    \begin{tikzcd}[cramped, sep=small]
      & \Delta_2.f_2^*A_2 \ar{rr} \ar[two heads]{dd}
      \ar[phantom]{rrdd}[pos=0em]{\lrcorner}
      & & \Gamma_2.A_2 \\
      \Delta_1.f_1^*A_1
      \ar[rrru, phantom, "\simeq"{pos=0.4}]
      \ar[dashed]{ur}{\gamma}[description, inner sep=1]{\simeq}
      \ar[crossing over]{rr}
      \ar[phantom]{rrdd}[pos=0em]{\lrcorner}
      \ar[two heads]{dd}
      & & \Gamma_1.A_1 \ar[]{ur}[description, inner sep=1]{\simeq}  \\
      & \Delta_2 \ar{rr}[near start]{f_2}
      & & \Gamma_2 \ar[crossing over, twoheadleftarrow]{uu} \\
      \Delta_1 \ar{rr}[swap]{f_1}
      \ar[rrru, phantom, "\simeq"]
      \ar[]{ur}[description, inner sep=1]{\simeq}
      & & \Gamma_1 \ar[crossing over, twoheadleftarrow]{uu}
      \ar[]{ur}[swap]{\simeq}
    \end{tikzcd}
  \end{equation*}
  Let $w_\Gamma,w_A$ and $w_\Delta$ respectively be given by
  \begin{align*}
    (\vec{x}_1:\Gamma_1) &\vdash t_\Gamma(\vec{x}_1) : \Gamma_2 \\
    (\vec{x}_1:\Gamma_1)(a_1:A_1) &\vdash t_\Gamma(\vec{x}_1), t_A(a_1) : \Gamma_2.A_2 \\
    (\vec{y}_1:\Delta_1) &\vdash t_\Delta(\vec{y}_1) : \Delta_2
  \end{align*}
  so that $H \colon w_\Gamma \cdot f_1 \simeq f_2 \cdot w_\Delta$ is given by
  \begin{align*}
    (\vec{y}_1:\Delta_1) \vdash
    t_H(\vec{y}_1)
    :
    \Id_{\Gamma_2}(t_\Gamma f_1(\vec{y}_1),f_2t_\Delta(\vec{y}_1))
  \end{align*}
  By $\transport$, there is an equivalence over $\Delta_1$
  \begin{align*}
    f_1^*A_1 \xrightarrow{f_1^*t_A} f_1^*t_\Gamma^*A_2 \xrightarrow{\transport(H)} t_\Delta^*f_2^*A_2
  \end{align*}
  which gives rise to an equivalence
  $\gamma(w_A,w_\Gamma,w_\Delta; f_1,f_2,H) \colon \Delta_1.f_1^* \to
  \Delta_2.f_2^*A_2$ over $w_\Delta \colon \Delta_1 \to \Delta_2$.
\end{definition}

\begin{lemma}\label{lem:glue}
  The map
  $\gamma(w_A, w_\Gamma, w_\Delta; f_1, f_2, H) \colon
  \Delta_1.f_1^*A_1 \to \Delta_2.f_2^*A_2$ from \Cref{def:glue} is an
  equivalence.
  Moreover, if there is $H' \colon w_\Gamma \cdot f_1 \simeq f_2 \cdot w_\Delta$
  homotopic to $H$ then
  $\gamma(w_A, w_\Gamma, w_\Delta; f_1, f_2, H) \simeq \gamma(w_A, w_\Gamma,
  w_\Delta; f_1, f_2, H')$.
\end{lemma}
\begin{proof}
  By the Gluing Lemma \cite[Application 12.11]{DHK},
  $\gamma(w_A, w_\Gamma, w_\Delta; f_1, f_2, \refl) \colon \Delta_1.f_1^*A_1 \to
  \Delta_2.f_2^*A_2$ from \Cref{def:glue} is an equivalence, so by
  $\MsJ$-elimination,   $\gamma(w_A, w_\Gamma, w_\Delta; f_1, f_2, H) \colon
  \Delta_1.f_1^*A_1 \to \Delta_2.f_2^*A_2$
  equivalence.
  By $\MsJ$-elimination on the homotopy $H' \simeq H$, one obtains a homotopy
  $\gamma(w_A,w_\Gamma,w_\Delta;f_1,f_2,H) \simeq
  \gamma(w_A,w_\Gamma,w_\Delta;f_1,f_2,H)$.
\end{proof}
Often, if $H=\refl$ in \Cref{def:glue} then we write
$\gamma(w_A,w_\Gamma,w_\Delta;f_1,f_2)$ to mean
$\gamma(w_A,w_\Gamma,w_\Delta;f_1,f_2,\refl)$.

\subsection{Homotopical Structure on the Category of Contextual
  Categories}
We now turn our attention towards defining the notion of \emph{Morita equivalence}, which applies for
adjunctions between categories with \emph{left semi-model structure}.
This allows us to formalise the equivalence between $\ITT+\UIP$ and $\ETT$ as a
Morita equivalence.

We begin by recalling the definition of a left semi-model structure.

\begin{definition}[{\cite[Definition 1(I)]{spitzweck:operads-algebras-modules}}]
  Let $\McE$ be a bicomplete category with three classes of maps:
  \emph{weak equivalences} $\McW$, \emph{fibrations} $\McF$, \emph{cofibrations}
  $\McC$.
  The above data form a \emph{left semi-model structure} when:
  \begin{itemize}
    \item All three classes are closed under retracts; $\McW$ satisfies the
    2-of-3 property; $\McF$ and $\McC$ are stable under pullback.
    \item Cofibratons have the left-lifting property against acyclic fibrations;
    acyclic cofibrations with cofibrant source have the left-lifting property
    against fibrations.
    \item Every map can be factored into a cofibration followed by an acyclic
    fibration; and maps with cofibrant source can be factored into an acyclic
    cofibration followed by a fibration.
  \end{itemize}
\end{definition}

We are now ready to define the notion of a \emph{Morita equivalence}.

\begin{definition}
  [{Morita Equivalence \cite[Definition 2.4]{isaev18}}]
  \label{def:morita}
  Let $\bT_1, \bT_2$ be theories whose categories of models $\bM_1, \bM_2$ are
  equipped with left semi-model structures.
  %
  The theories $\bT_1$ and $\bT_2$ are \emph{Morita equivalent} when there is an
  adjunction
  \begin{equation*}
    \begin{tikzcd}[cramped]
      \bM_1
      \ar[r, {yshift=3}, "L"]
      \ar[r, phantom, "\bot" {font=\tiny}]
      & \bM_2
      \ar[l, {yshift=-3}, "R"]
    \end{tikzcd}
  \end{equation*}
  such that:
  \begin{itemize}
    \item \emph{Qullien adjunction.} $R$ preserves fibrations and
    acyclic fibrations.
    \item \emph{Weak equivalence.} The unit of the adjunction is a
    weak equivalence at each cofibrant object.
  \end{itemize}
\end{definition}

Following \cite{kapulkin-lumsdaine-18}, $\CxlCat_{\ITT}$,
$\CxlCat_{\ITT+\UIP}$, and $\CxlCat_{\ETT}$ admit left semi-model structure, which we now recall.
For the rest of this section, fix $F \colon \bC \to \bD \in \CxlCat_{\ITT}$.

\begin{definition}\label{def:ctx-wequiv}
  A contextual functor $F$ is a \emph{weak equivalence} when it has the \emph{weak
    type lifting} and \emph{weak term lifting} properties.
  \begin{itemize}
    \item \emph{Weak type lifting.}
    If $\Gamma \in \bC$ and $\Delta \in \bD$ is such that
    $\ft{\Delta} = F\Gamma$ then there is $\Gamma.A$ an extension of
    $\Gamma$ such that there is some
    $w \colon F(\Gamma.A) \simeq \Delta$ in $\bD$ over $F\Gamma$.
    \item \emph{Weak term lifting.}
    If $\Gamma \in \bC$ and $A \in \Ty\Gamma$ then for each section
    $a \colon F\Gamma \to F(\Gamma.A) \in \bD$ there exists
    $\overline{a} \colon \Gamma \to \Gamma.A \in \bC$ along with a
    homotopy $e \colon F\overline{a} \simeq a$.
  \end{itemize}
\end{definition}

\begin{definition}\label{def:ctx-fib}
  A contextual functor $F$ is a \emph{fibration} when it has the \emph{homotopy
    lifting properties} for terms and types:
  \begin{itemize}
    \item \emph{Homotopy lifting property for types.}
    Suppose $\Gamma \in \bC$ and $A \in \Ty\Gamma$.
    Fix $w \colon F(\Gamma.A) \simeq \Delta$ over $F\Gamma$ in $\bD$.
    Then there is $\overline{w} \colon \Gamma.A \simeq \Gamma.A'$ over
    $\Gamma$ in $\bC$ such that $F\overline{w} = w$ in $\bD$.
    \item \emph{Homotopy lifting property for terms.}
    Suppose $\Gamma \in \bC$ and $A \in \Ty\Gamma$.
    Fix sections $a \colon \Gamma \to \Gamma.A$ and
    $a' \colon F\Gamma \to F(\Gamma.A)$ along with a homotopy
    $e \colon Fa \simeq a'$.
    Then there is a section $\overline{a'} \colon \Gamma \to \Gamma.A$
    along with $\overline{e} \colon a \simeq \overline{a'}$ in $\bC$
    such that $F\overline{e} = e$ in $\bD$.
  \end{itemize}
\end{definition}

\begin{definition}\label{def:ctx-triv-fib}
  A contextual functor $F$ is a \emph{trivial fibration} when it has the
  \emph{strict type lifting} and \emph{strict term lifting}
  properties:
  \begin{itemize}
    \item \emph{Strict type lifting.} If $\Gamma \in \bC$ and
    $\Delta \in \bD$ is such that $\ft{\Delta} = F\Gamma$ then there
    is $\Gamma.A$ an extension of $\Gamma$ such that
    $F(\Gamma.A) = \Delta$.
    \item \emph{Strict term lifting.} If $\Gamma \in \bC$ and
    $A \in \Ty\Gamma$ then each section
    $a \colon F\Gamma \to F(\Gamma.A) \in \bD$ lifts to a section
    $\overline{a} \colon \Gamma \to \Gamma.A \in \bC$.
  \end{itemize}
\end{definition}

\begin{lemma}[{\cite[Proposition 3.14]{kapulkin-lumsdaine-18}}]
  A contextual functor $F$ is a trivial fibration if and only if it is both a weak
  equivalence and a fibration.
\end{lemma}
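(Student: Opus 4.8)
The plan is to prove the two implications separately, reading the three notions as a single dictionary: being a trivial fibration (\Cref{def:ctx-triv-fib}) means one can lift types and terms \emph{strictly}, being a weak equivalence (\Cref{def:ctx-wequiv}) provides lifts only \emph{up to equivalence}, and being a fibration (\Cref{def:ctx-fib}) lets one \emph{rigidify} a given equivalence or homotopy to one whose image under $F$ is the prescribed one. Two observations are immediate. First, a trivial fibration is a weak equivalence: given the strict lift $\Gamma.A$ with $F(\Gamma.A) = \Delta$ (resp.\ the strict lift $\overline a$ of a section with $F\overline a = a$), the identity equivalence $\id \colon F(\Gamma.A) \simeq \Delta$ (resp.\ the reflexivity homotopy $F\overline a \simeq a$) witnesses weak type (resp.\ term) lifting. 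So the real content lies in relating strict lifting to the homotopy lifting properties.

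For the implication that a weak equivalence and fibration $F$ is a trivial fibration, I would argue in each case by first producing a candidate with the weak property and then rigidifying with the fibration property. For strict type lifting, take $\Delta$ over $F\Gamma$; weak type lifting yields $\Gamma.A$ together with $w \colon F(\Gamma.A) \simeq \Delta$ over $F\Gamma$, and feeding this $w$ into the homotopy lifting property for types produces $\overline w \colon \Gamma.A \simeq \Gamma.A'$ over $\Gamma$ with $F\overline w = w$; comparing the codomains of the equal maps $F\overline w$ and $w$ gives $F(\Gamma.A') = \Delta$, so $\Gamma.A'$ is the required strict lift. For strict term lifting, take a section $a \colon F\Gamma \to F(\Gamma.A)$; weak term lifting yields $\overline a_0 \colon \Gamma \to \Gamma.A$ and a homotopy $e \colon F\overline a_0 \simeq a$, and the homotopy lifting property for terms then produces $\overline a \colon \Gamma \to \Gamma.A$ and $\overline e \colon \overline a_0 \simeq \overline a$ with $F\overline e = e$. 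Since $F\overline e$ and $e$ are equal maps into the identity context, postcomposing with its projection equates their endpoints and forces $F\overline a = a$; thus $\overline a$ is the required strict lift. This direction is clean and essentially formal.

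For the converse — that a trivial fibration is a fibration — the homotopy lifting property for terms is handled directly: strict term lifting produces $\overline{a'}$ with $F\overline{a'} = a'$, and the given homotopy $e \colon Fa \simeq a'$ is precisely a term of $F\bigl(\Id_A(a,\overline{a'})\bigr)$, which strict term lifting then lifts to a homotopy $\overline e \colon a \simeq \overline{a'}$ with $F\overline e = e$. For the homotopy lifting property for types, strict type lifting applied to $\Delta$ over $F\Gamma$ produces $\Gamma.A'$ with $F(\Gamma.A') = \Delta$, and, viewing the map $w \colon F(\Gamma.A) \to \Delta = F(\Gamma.A')$ over $F\Gamma$ as a section — a term of the weakened type $\pi^*A'$ in context $\Gamma.A$ — strict term lifting lifts it to $\overline w \colon \Gamma.A \to \Gamma.A'$ over $\Gamma$ with $F\overline w = w$.

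The main obstacle is the remaining point: that this $\overline w$ is a weak equivalence in $\bC$, as the notation $\Gamma.A \simeq \Gamma.A'$ in \Cref{def:ctx-fib} demands. Here I cannot simply invoke that $F\overline w = w$ lies in $\McW$, since an exact functor need not reflect weak equivalences in general. Instead I would reconstruct the homotopy-inverse data of $w$ inside $\bC$ by strict lifting. The subtlety is that, per the explicit description of $\McW$, the maps $g_1,g_2$ and homotopies $e_1 \colon g_1 w \simeq \id$, $e_2 \colon w g_2 \simeq \id$ witnessing $w \in \McW$ in $\bD$ need not be compatible with the projections to $F\Gamma$, whereas strict term lifting only lifts maps and homotopies lying over the base. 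I would therefore first upgrade $w$ to a \emph{fiberwise} homotopy equivalence — a weak equivalence over $F\Gamma$ between the fibrations $F(\Gamma.A) \twoheadrightarrow F\Gamma$ and $\Delta \twoheadrightarrow F\Gamma$ admits a homotopy inverse and homotopies over $F\Gamma$, by the standard fibration-category argument in the slice — and then strict-lift the inverse (as a section over $\Gamma$) and each homotopy (as a term of the appropriate identity context over $\Gamma$), obtaining data exhibiting $\overline w \in \McW$. Establishing this fiberwise refinement, and checking that the lifted homotopies have the intended endpoints, is where the argument requires the most care.
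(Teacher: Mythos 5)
Your proposal is correct, but there is a wrinkle in the comparison: the paper itself contains no proof of this lemma --- it is imported verbatim from \cite[Proposition 3.14]{kapulkin-lumsdaine-18} --- so the only thing to compare against is the argument in that cited source, which is essentially what you have reconstructed. Your decomposition matches it: trivial fibration $\Rightarrow$ weak equivalence by taking identity/reflexivity witnesses; weak equivalence $+$ fibration $\Rightarrow$ trivial fibration by producing a weak lift and then rigidifying it via the homotopy lifting properties of \Cref{def:ctx-fib}, with the codomain/endpoint comparison of $F\overline{w} = w$ and $F\overline{e} = e$ yielding strictness; trivial fibration $\Rightarrow$ fibration by strictly lifting the map and its homotopy data, encoded as sections of (pullbacks of) identity contexts. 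Two of the steps you defer are genuinely load-bearing, and both are legitimate. First, lifting a homotopy by strict term lifting is not literally an instance of \Cref{def:ctx-triv-fib}, since a homotopy $e \colon Fa \simeq a'$ is a section of the pullback of a Garner identity context, which is a context extension of length greater than one rather than a single type; strict term lifting extends to such extensions by induction on their length, so this is routine but should be said. Second, you correctly identified the crux: $F\overline{w} = w \in \McW$ does not by itself put $\overline{w}$ in $\McW$, and the fix is the fiberwise (Dold-type) refinement --- an equivalence over $F\Gamma$ between fibrations over $F\Gamma$ admits inverse and homotopies over $F\Gamma$ --- whose pieces are then all expressible as sections over contexts in the image of $F$ and hence strictly liftable. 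That refinement is exactly the lemma on equivalences over a base that Kapulkin--Lumsdaine establish (via $\MsJ$-elimination) before their Proposition 3.14, so invoking it as standard is appropriate here rather than a gap; a fully self-contained write-up would, of course, need to prove it.
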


\begin{definition}\label{def:ctx-cofib}
  For each $n \in \bN$, we define, for $\bT$ one of $\ITT,\ITT+\UIP,\ETT$:
  \begin{itemize}
    \item $\vvbr{\Ctx_n}^\CxlCat_\bT \in \CxlCat_\bT$ to be the contextual
    category with the structure of $\bT$ freely generated by a generic context
    of length $n$.
    \item $\vvbr{\Ctx_n \vdash \Type}^\CxlCat_\bT \in \CxlCat_\bT$ to be the
    contextual category with the structure of $\bT$ freely generated by a
    generic type in a context of length $n$.
    \item $\vvbr{\Ctx_n \vdash \Term : \Type}^\CxlCat_\bT \in \CxlCat_\bT$ to be
    the contextual category with the structure of $\bT$ freely generated by a
    generic (well-typed) term in in a context of length $n$.
  \end{itemize}
  A contextual functor $F$ is a \emph{cofibration} in $\CxlCat_\bT$ when it is
  small with respect to the set $\McI^\CxlCat_\bT$ of maps consisting of
  inclusions
  $\vvbr{\Ctx_n}^\CxlCat_\bT \hookrightarrow \vvbr{\Ctx_n \vdash
    \Type}^\CxlCat_\bT$ and
  $\vvbr{\Ctx_n \vdash \Type}^\CxlCat_\bT \hookrightarrow \vvbr{\Ctx_n \vdash
    \Term : \Type}^\CxlCat_\bT$.
\end{definition}

With the above structure, $\CxlCat_{\bT}$ admits a \emph{left semi-model
  structure} for $\bT = \ITT,\ITT+\UIP,\ETT$.

\begin{theorem}[{\cite[Theorem 6.9]{kapulkin-lumsdaine-18}}]
  Each of: $\CxlCat_{\ITT}$, $\CxlCat_{\ITT+\UIP}$ and $\CxlCat_{\ETT}$ carries
  a left semi-model structure given by weak equivalences, fibrations, and
  cofibrations defined above.
  \qed
\end{theorem}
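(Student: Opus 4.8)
The plan is to realise each of $\CxlCat_\ITT$, $\CxlCat_{\ITT+\UIP}$, and $\CxlCat_\ETT$ as a cofibrantly generated semi-model structure, treating the three uniformly. First I would record that each category is the category of models of a generalised algebraic theory, hence locally presentable and in particular bicomplete, which supplies the ambient structure the definition requires. The extra $\UIP$- and equality-reflection data are themselves stable algebraic structure preserved by the constructions below, so the three cases differ only in the generating data and not in the shape of the argument.

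Next I would produce a set $I$ of generating cofibrations and a set $J$ of generating acyclic cofibrations by reading the strict and homotopy lifting conditions of \Cref{def:ctx-triv-fib} and \Cref{def:ctx-fib} as right lifting properties. Concretely, $I$ consists of the contextual functors between free models that (i) adjoin a type over a given context and (ii) adjoin a term of a given type; by construction a functor has the right lifting property against $I$ exactly when it has strict type and term lifting, i.e.\ is a trivial fibration. Similarly $J$ consists of the functors adjoining a type, resp.\ a term, \emph{up to a chosen homotopy}, so that the right lifting property against $J$ is precisely the homotopy lifting property, i.e.\ fibrancy. Applying Quillen's small object argument---licensed by local presentability---to $I$ and to $J$ then yields the two required functorial factorisations and identifies the cofibrations as the retracts of $I$-cellular maps, matching the definition of cofibrations as the maps with the left lifting property against trivial fibrations.

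The homotopy-theoretic content enters in verifying that $\McW$ satisfies 2-of-3. Here I would lean on the fibration-category structure carried by each contextual category, recalled above from \cite{akl15}, together with the gluing lemma \Cref{lem:glue}: these let one compute with type-theoretic homotopies and compare the lifting-defined weak equivalences of \Cref{def:ctx-wequiv} against the equivalences detected fibrewise. Closure of all three classes under retracts, and pullback-stability of $\McF$, are then formal consequences of the orthogonality descriptions, since a class defined by a right lifting property is automatically retract-closed and pullback-stable; the stated stability of $\McC$ and the identification of the trivial fibrations with $\McW \cap \McF$ invoke \cite[Proposition 3.14]{kapulkin-lumsdaine-18}.

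The hard part, and the reason the structure is only a \emph{semi}-model structure, is the cofibrancy-restricted half: that acyclic cofibrations \emph{with cofibrant source} have the left lifting property against fibrations, and that maps \emph{with cofibrant source} factor as an acyclic cofibration followed by a fibration. I would obtain the factorisation by showing that the $J$-cellular maps produced by the small object argument are weak equivalences, which requires iterating the homotopy-lifting constructions coherently up the tower of context extensions; this coherence is exactly what freeness of a cofibrant source guarantees and what fails for a general source. The corresponding lifting property I would then deduce from this factorisation together with the retract argument, so that the two cofibrancy hypotheses propagate in tandem. Establishing that these $J$-cellular maps are genuinely weak equivalences---rather than merely enjoying the formal lifting property---is the step I expect to absorb most of the work.
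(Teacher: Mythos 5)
The paper contains no proof of this statement to compare against: the theorem is imported verbatim from Kapulkin--Lumsdaine (their Theorem~6.9), which is why it is stated with an immediate end-of-proof mark. So your attempt can only be measured against the proof in the cited source, and by that benchmark your plan is essentially the right reconstruction: Kapulkin and Lumsdaine likewise use that these categories are locally presentable categories of models of an essentially algebraic theory, read off a generating set $I$ (free type/term extensions) so that $\mathrm{RLP}(I)$ is exactly the trivial fibrations of \Cref{def:ctx-triv-fib}, a set $J$ (type/term extensions up to homotopy) so that $\mathrm{RLP}(J)$ is exactly the fibrations of \Cref{def:ctx-fib}, run the small object argument for both factorisations, and concentrate the real work in the cofibrancy-restricted half --- showing $J$-cellular maps out of cofibrant objects are weak equivalences --- which is precisely why one only gets a \emph{left semi-}model structure.

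Two caveats on the parts you wave at. First, your remark that retract-closure and stability are ``formal consequences of the orthogonality descriptions'' covers only $\McF$, the trivial fibrations, and $\McC$; the weak equivalences of \Cref{def:ctx-wequiv} are not defined by any lifting property, so their closure under retracts and the 2-of-3 (indeed 2-of-6) property are substantive verifications, carried out in the cited source by direct arguments about homotopy-essential surjectivity on types and terms --- your appeal to the internal fibration-category structure and \Cref{lem:glue} is the right toolkit but is itself a chunk of work comparable to the $J$-cellular step, not a formality. Second, for $\CxlCat_{\ETT}$ the blanket phrase ``models of a generalised algebraic theory'' glosses over the fact that equality reflection (\Cref{def:eq-reflect}) is an axiom (a Horn implication), not algebraic structure; local presentability survives because the full subcategory is closed under limits and filtered colimits, but this deserves to be said rather than absorbed into the uniform treatment.
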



\section{Outline of Argument} \label{sec:outline}
Recall that there is a forgetful functor
$\abs{-} \colon \CxlCat_\ETT \to \CxlCat_{\ITT+\UIP}$ as described in
\Cref{subsec:uip-ett}.
Our main goal is to show that this forgetful functor constitutes the
right adjoint of a Morita equivalence.
Denote its left adjoint as
\begin{align*}
  \vbr{-}_\ETT^\circ \colon \CxlCat_{\ITT+\UIP}\to \CxlCat_\ETT
\end{align*}
There are two main requirements for this left adjoint:
\begin{itemize}
  \item Obviously, $\vbr{\bC}_\ETT^\circ$ must be a model of an extensional type
  theory for each cofibrant $\bC \in \CxlCat_{\ITT+\UIP}$.
  \item In view of the weak equivalence condition of \Cref{def:morita} and the
  definition of a weak equivalence given in \Cref{def:ctx-wequiv}, we require
  that the unit $\bC \to \abs{\vbr{\bC}_\ETT^\circ}$ at $\bC$ cofibrant has the weak
  type and term lifting properties.
\end{itemize}
We satisfy the above requirements by providing an explicit construction of its
left adjoint at $\McI^\CxlCat_{\ITT+\UIP}$-cellular objects.
Suppose we have such a cellular object $\bC \in \CxlCat_{\ITT+\UIP}$ and we
would like to construct a model of extensional type theory $\vbr{\bC}_\ETT$
using $\bC$.

Because the main difference between $\ITT$ and $\ETT$ is the equality reflection
rule, the intuition is that whenever two terms are propositionally equal in
$\bC$ then they should be formally identified in $\vbr{\bC}_\ETT$.
However, because in dependent type theory, types depend on terms, we need to
also identify the types that depend on the terms we identified.
For example, suppose terms $a_1~a_2:A$ are propositionally equal so that
with equality reflection, they are judgementally equal.
Now suppose we have a dependent type $(x:A) \vdash B(x)~\Type$.
By $\transport$ and the proof $H \colon a_1 \simeq a_2$, we have an equivalence
$\gamma(H) \colon B(a_1) \tosimeq B(a_2)$.
But under the presence of the equality reflection rule, $a_1 \simeq a_2$ implies
that $a_1 = a_2$ and so $B(a_1) = B(a_2)$.
Furthermore, by $\UIP$, if $a_1 = a_2$ then a proof that
$H \colon a_1 \simeq a_2$ is homotopic to $\refl$.
But then $\transport$ at $\refl$ is exactly the identity so the equivalence
$\gamma(H) \colon B(a_1) \simeq B(a_2)$ when $a_1 = a_2$ judgementally is homotopic
to the identity.
So $\gamma(H) \colon B(a_1) \tosimeq B(a_2)$ should be identified with the identity.
But then let us now suppose that there are terms $b_1 : B(a_1)$ and
$b_2 : B(a_2)$ for which we have a proof of propositional equality
$t(p) \cdot b_1 \simeq b_2 : B(a_2)$.
Once again, we see need to formally identify terms $t(p) \cdot b_1$ and $b_2$.

Because in a model $\bC$ of $\ITT$, terms and term-in-type substitution
correspond to maps and pullbacks respectively, we are led to the iterative
process of repeatedly identifying homotopic maps and then identifying pullbacks
along these homotopic maps.
In particular, we identify pullbacks along homotopic maps by setting the map
obtained by $\transport$ to identity.
In other words, we are iteratively specifying a class of maps $\McW_\ETT$ in
$\bC$, obtained in a fashion similar to that in \Cref{lem:glue}, to formally
collapse to identity maps to obtain $\vbr{\bC}_\ETT$ via a quotient
construction.
Under this quotient construction, the type- and term-lifting properties now
manifest themselves as the existence of a choice of representatives for each
equivalence class.

In performing this quotient construction to obtain $\vbr{\bC}_\ETT$, we need to
ensure it is indeed a contextual category supporting the requisite logical
structure.
Roughly, this means that if we have identified
$\Gamma_1 \tosimeq \Gamma_2 \in \McW_\ETT$ and
$\Gamma_1.A_1 \tosimeq \Gamma_2.A_2 \in \McW_\ETT$ and
$\Gamma_1.A_1.B_1 \tosimeq \Gamma_2.A_2.B_2 \in \McW_\ETT$ then
we also need to identify
$\Gamma_1.\Pi(A_1,B_1) \tosimeq \Gamma_2.\Pi(A_2,B_2) \in \McW_\ETT$.
Likewise, we need to identify
$\Gamma_1.\Sigma(A_1,B_1) \tosimeq \Gamma_2.\Sigma(A_2,B_2) \in \McW_\ETT$ and
$\Gamma_1.A_1.A_1.\Id_{A_1} \tosimeq \Gamma_2.A_2.A_2.\Id_{A_2} \in \McW_\ETT$.
Moreover, in order to ensure that result of this formal identification is a
category to begin with, we need to ensure that identity maps in $\vbr{\bC}_\ETT$
are unique.
As we shall see, the two goals above rely on the property that if
$w_1,w_2 \colon \Gamma \tosimeq \Delta \in \McW_\ETT$ then $w_1 \simeq w_2$.
This follows from $\UIP$ along with a classification of types property enjoyed
by cellular models.
This classification of types property is also why in \Cref{def:morita}, we
restrict our attention to the cellular (and thus cofibrant) models.
In addition, here, we see an interesting parallel: in this quotient
construction, $\UIP$ manifests itself as saying that the identity map is unique.

To summarise, we first show in \Cref{sec:cofib} that cellullar models are
generalisations of the syntactic category of $\ITT+\UIP$ by allowing for
freely-added base types and terms, but not definitional equalities between
those.
Then, throughout \Cref{sec:w-ett,sec:c-ett,sec:c-ett-logic}, we fix a cellular
object $\bC \in \CxlCat_{\ITT+\UIP}$ and proceed as follows:
\begin{itemize}
  \item \Cref{sec:w-ett} describes a wide subcategory $\McW_\ETT$ in $\bC$ of
  the weak equivalences $\McW$ called the \emph{extensional kernel} constructed
  in the aforementioned manner.
  \item \Cref{sec:c-ett} describes a contextual category
  $\vbr{\bC}_\ETT$ obtained from $\bC$ by formally collapsing all maps
  in $\McW_\ETT$ to identities via a quotient construction.
  This category is called the \emph{free extensional type theory}
  generated by $\bC$ and as a result of this quotient construction,
  there is a quotient map $[-] \colon \bC \to \vbr{\bC}_\ETT$.
  \item \Cref{sec:c-ett-logic} verifies that $\vbr{\bC}_\ETT$
  inherits the requisite logical structure from $\bC$.
\end{itemize}
Finally, in \Cref{sec:c-ett-lift}, we check that the assignment of cellular
objects $\bC \mapsto \vbr{\bC}_\ETT$ defines a functor that agrees with the
functor
\begin{align*}
  \vbr{-}_\ETT^\circ \colon \CxlCat_{\ITT+\UIP} \to \CxlCat_\ETT
\end{align*}
that is left adjoint to the forgetful functor
$\abs{-} \colon \CxlCat_{\ETT} \to \CxlCat_{\ITT+\UIP}$ on cellular objects and
that this adjunction forms a Morita equivalence.

\subsection{Comparison with Hofmann's Proof}
Our approach constructing the class of maps $\McW_\ETT$ in $\bC$ and collapsing
them to identities is reminiscent of Hofmann's approach in
\cite{hofmann:extensional-constructs}.

In \cite{hofmann:extensional-constructs}, Hofmann works explicitly with the
syntactic category with attributes of $\ITT+\UIP$ extended with the natural
numbers as a base type.
His proof proceeded as follows:
\begin{itemize}
  \item In \S 3.2.5.2, Hofmann describes a class of partial homotopy
  equivalences (referred to in \cite{hofmann:extensional-constructs} as
  propositional isomorphisms)
  $\mathsf{co}_{\Gamma,\Delta} \colon \Gamma \tosimeq \Delta$ constructed by
  inspecting the outer-most type former of the type in $\Gamma$ and $\Delta$.
  The maps $\mathsf{co}$ corresponds to the class $\McW_\ETT$ in our setting,
  and the inspection of outer-most type former corresponds to
  \Cref{prop:cofib-ty-class} due to cofibrancy in our setting.
  \item Then, in \S 3.2.5.3, Hofmann describes a CwA $\bQ$ obtained from the
  syntactic CwA via a quotient construction by collapsing the maps
  $\mathrm{co}_{\Gamma,\Delta}$ to identities.
  This corresponds the category $\vbr{\bS}_\ETT$ in our setting, where $\bS$ is
  the syntactic contextual category of $\ITT+\UIP$ extended with a base type of
  natural numbers.
\end{itemize}

A major difference in our approach from that of Hofmann's is that the definition
of the class $\McW_\ETT$ itself does not rely on inspecting the outer-most type
former of domains and codomains of its maps.
Instead, we rely on cofibrancy to inspect the outer-most type former when we
prove that $w_1,w_2 \colon \Gamma \tosimeq \Delta \in \McW_\ETT$ implies
$w_1 \simeq w_2$.
Furthermore, Hofmann works with the syntactic model of $\ITT+\UIP$, which has no
base types or terms.
In contrast, our setting of cofibrant models is more general in that they allow
for freely-added types or terms.
As a result, the obvious property in the syntactic model that that types can be
distinguished using their outer-most type former requires justification, which is given in \Cref{sec:cofib}.
Note that this part of the proof does not have an analogue in Hofmann's proof as Hofmann does not consider extensions of the base theories.
In each of \Cref{sec:w-ett,sec:c-ett,sec:c-ett-logic,sec:c-ett-lift}, which
constitute the remaining portion of the proof, we have provided an overview of
each section and cross-referenced various lemmas and propositions in the section
in question with the corresponding results in
\cite{hofmann:extensional-constructs} for easy comparison.


\section{Classification of Types in Cellular Models}\label{sec:cofib}
A key step in Hoffman's original proof in the syntactic setting was the ability
to define maps between contexts based on the outer-most type former of the last
type in context.
Although the definition of Morita equivalence in \Cref{def:morita} as a Quillen adjunction between their
categories of models whose units weak equivalences when evaluated at cofibrant models
is more general than Hofmann's syntactic setting, cofibrancy
nevertheless allows one to recover the certain syntax-like properties.
Specifically, as cofibrant models are obtained by freely adjoining types and terms,
but no judgemental equalities, to the initial model, they have a syntax-like
property that allows one to classify types into the mutually exclusive cases of
$\Pi$-types, $\Id$-types, $\Sigma$-types, or base types.
This syntax-like property is the key that allows Hoffman's proof to be
reproduced in our more general setting.

Intuitively, this result holds because, by \Cref{def:ctx-cofib}, the cofibrant models are exactly those obtained by repeatedly freely adjoining base types and
terms, but no definitional equalities, starting from the initial model, which consists of just the empty context.
In each step when a base type is freely adjoined, $\Pi$-, $\Id$- and
$\Sigma$-types that can be obtained using combinations of this new base type are
also formally added.
In what follows, we formalise this intuition by giving an explicit construction
of a model of $\ITT+\UIP$ obtained by freely adjoining a type or term to a given
model of $\ITT+\UIP$
This construction is most naturally viewed not directly as construction on
contextual categories, but rather constructions happening in the slightly more
general world of \emph{categories with attributes} (CwAs).
Therefore, we first recall some basic definitions regarding to CwAs and their
relation to contextual categories in \Cref{subsec:cwa} and then in  \Cref{subsec:cofib-constr}, we proceed to
prove the classification of types for cofibrant models in \Cref{prop:cofib-ty-class}.

\subsection{Categories with Attributes}\label{subsec:cwa}
\begin{definition}[Category with Attributes]\label{def:cwa}
  A category with attribute $\bC$ consists of a category $\bC$ along with the following data:
  \begin{itemize}
    \item A chosen terminal object $1 \in \ob\bC$.
    \item A functor $\Ty_\bC\relax \colon \bC^\op \to \Set$.
    \item An assignment to each $A \in \Ty_\bC\Gamma$ for $\Gamma \in \bC$, an
    object $\Gamma.A \in \bC$ and a map
    $\pi^\bC_A \colon \Gamma.A \twoheadrightarrow \Gamma$.
    \item For each $A \in \Ty_\bC\Gamma$ and $f \colon \Delta \to \Gamma$, a map
    $f.A \colon \Delta.f^*A \to \Gamma.A$, where $f^*A \coloneqq (\Ty_\bC f)A$.
  \end{itemize}
  subject to the condition that for each $A \in \Ty_\bC\Gamma$ and
  $f \colon \Delta \to \Gamma$ as above, the following square
  \begin{equation*}
    \begin{tikzcd}
      \Delta.f^*A & \Gamma.A \\
      \Delta & \Gamma
      \ar["f.A", from=1-1, to=1-2]
      \ar["\pi_{f^*A}"', two heads, from=1-1, to=2-1]
      \ar["\pi_A", two heads, from=1-2, to=2-2]
      \ar["f"', from=2-1, to=2-2]
      \ar["\lrcorner"{pos=0}, phantom, from=1-1, to=2-2]
    \end{tikzcd}
  \end{equation*}
  is a pullback.
  Further, these operations are strictly functorial, i.e.,
  \begin{itemize}
    \item ${\id\relax}.A = \id_{\Gamma.A}$
    \item For $g \colon \Theta \to \Delta$ and $f \colon \Delta \to \Gamma$, one has
    \begin{equation*}
      (fg).A = g.f^*A \cdot f.A
    \end{equation*}
  \end{itemize}
\end{definition}
Just like in the case of contextual categories, we denote dependent projections
by the two-headed arrow, e.g. $\pi_A \colon \Gamma.A \twoheadrightarrow \Gamma$
for $\Gamma \in \bC$ and $A \in \Ty_\bC\Gamma$.
Moreover, we will often omit the subscript, writing
$\pi \colon \Gamma.A \twoheadrightarrow \Gamma$ instead of $\pi_A$.
Likewise, we often write $\Gamma.A.A$ to mean $\Gamma.A.\pi^*A$.
Furthermore, the set $\Tm_\bC^\Gamma{A}$ consists of sections of
$\Gamma \to \Gamma.A$ of the projection $\Gamma.A \twoheadrightarrow \Gamma$,
and is referred to as the \emph{terms of $A$}.

Denote by $\CwA$ the category of categories with attributes and maps thereof
preserving the type-theoretic structure (i.e. the empty context, the context
projections and the substitution pullbacks).
Again, much like in the case of contextual categories, categories with
attributes can be equipped with additional structure corresponding to the
logical structure such as $\Id$-, $\Pi$- (or $\Pi_\Ext$-), $\Sigma$ and
$\UIP$-structure present in type theory.
The category consisting of categories with attributes equipped with $\Id$-,
$\Pi_\Ext$, $\Sigma$-types as objects and maps $F \colon \bC \to \bD$ between
such $\CwA$s given by those maps that preserve all type-theoretic and logical
structures is denoted by $\CwA_\ITT$.
Accordingly, $\CwA_{\ITT+\UIP}$ has as objects $\bC \in \CwA_\ITT$ along with an
$\UIP$ structure on $\bC$ and maps $\bC \to \bD$ are those maps in $\CwA_\ITT$
that additionally preserve the $\UIP$ structure.

It is clear that any contextual category $\bC$ can be regarded as a $\CwA$ by
taking the presheaf of types as the sections of the father map.
Hence, there is an evident full and faithful functor
$\CxlCat \hookrightarrow \CwA$ (respectively,
$\CxlCat_\ITT \hookrightarrow \CwA_\ITT$ and
$\CxlCat_{\ITT+\UIP} \hookrightarrow \CwA_{\ITT+\UIP}$) exhibiting $\CxlCat$
(respectively, $\CxlCat_\ITT$ and $\CxlCat_{\ITT+\UIP}$) as the full subcategory
consisting of $\CwA$s equipped with a suitable grading on objects, since such a
grading is unique if it exists, and is automatically preserved by any $\CwA$
map.
Conversely, the only difference between a $\CwA$ and a contextual category is
that objects in a contextual category must all be successively built-up by
repeated context comprehension.

\begin{definition}[Fibrant Slice]\label{def:fib-slice}
  Let $\bC \in \CwA$ and $\Gamma \in \bC$.
  A \emph{context over $\Gamma$} is a sequence $\vec{A} = (A_1, \ldots, A_n)$
  where $A_1 \in \Ty_\bC\Gamma$ and each $A_i \in \Ty_\bC\Gamma.A_0...A_{i-1}$.
  Any context $\vec{A}$ over $\Gamma$ induces an evident context extension
  $\Gamma.\vec{A} \coloneqq \Gamma.A_1...A_n$, with projection map
  $\pi_{\vec{A}} \coloneqq \Gamma.A_1...A_{n-1}.A_n \twoheadrightarrow
  \Gamma.A_1...A_{n-1} \twoheadrightarrow \ldots \twoheadrightarrow \Gamma$.

  The \emph{fibrant slice over $\Gamma$} is the contextual category
  $\bC\sslash\Gamma$ defined as follows:
  \begin{itemize}
    \item Objects of length $n$ are contexts $\vec{A} = (A_1, \ldots ,A_n)$ of
    length $n$ over $\Gamma$.
    \item
    $\sfrac{\bC}[\sslash]{\Gamma}(\vec{A}, \vec{B}) \coloneqq
    \sfrac{\bC}{\Gamma}(\Gamma.\vec{A}, \Gamma.\vec{B})$, and the categorical
    structure is inherited from the slice $\bC/\Gamma$.
    \item Reindexing and the connecting maps are inherited directly from $\bC$
    using $\Ty_\bC\relax$.
  \end{itemize}

  Moreover, any logical structure on $\bC$ induces one on $\bC \sslash \Gamma$.
  A map $f \colon \Gamma' \to \Gamma$ induces a contextual functor
  $f^* \colon \bC \sslash \Gamma \to \bC \sslash \Gamma'$, strictly functorially
  in $f$, and preserving all logical structure under consideration.
  This gives rise to a presheaf $\vec{\Ty\relax}_\bC \colon \bC^\op \to \Set$, where
  $\vec{\Ty\relax}_\bC$ consists of $\vec{A} \in \bC\sslash\Gamma$ and the
  functorial action on $f \colon \Gamma' \to \Gamma$ is given by the action of
  $f^* \colon \bC \sslash \Gamma \to \bC \sslash \Gamma'$ on objects.

  Similarly, for any $\CwA$ map $F \colon \bC \to \bD$ and $\Gamma \in \bC$,
  there is an induced slice functor
  $F \sslash \Gamma \colon \bC \sslash \Gamma \to \bD \sslash F\Gamma$,
  preserving any logical structure that F does.

  In particular, $\bC \sslash 1$, the fibrant slice over $1 \in \bC$, is the
  \emph{contextual core of $\bC$}.
  For each $\bC \in \CwA$ (respectively, $\CwA_\ITT$ and $\CwA_{\ITT+\UIP}$),
  denote by $\core\bC$ (respectively, $\core_\ITT\bC$ and $\core_{\ITT+\UIP}\bC$)
  the contextual core of $\bC$.
\end{definition}

\begin{definition}\label{def:cwa-cofib}
  Define the following freely-generated models in $\CwA$, where $\bT$ is either $\ITT$ or $\ITT+\UIP$.
  \begin{itemize}
    \item Let $\vvbr{\Ctx}^\CwA$ (respectively, $\CwA_\bT$) be the $\CwA$
    (respectively, $\CwA$ equipped with $\bT$-structure) freely generated by a
    generic context $\Ctx$.
    \item Let $\vvbr{\Ctx \vdash \Type}^\CwA$ (respectively,
    $\vvbr{\Ctx \vdash \Type}^\CwA_{\bT}$) be the $\CwA$ (respectively, $\CwA$
    equipped $\bT$-structure) freely generated by a generic context $\Ctx$ and a
    generic type $\Type$ in this generic context $\Ctx$.
    \item Let $\vvbr{\Ctx \vdash \Term : \Type}^\CwA$ (respectively,
    $\vvbr{\Ctx \vdash \Term : \Type}^\CwA_\bT$) be the $\CwA$ (respectively,
    $\CwA$ equipped $\bT$-structure) freely generated by a generic context
    $\Ctx$, a generic type $\Type$ in this generic context $\Ctx$ and a generic
    term $\Term$ whose type is the generic type $\Type$ in this generic context
    $\Ctx$.
  \end{itemize}
  Put
  \begin{align*}
    \McI^\CwA &\coloneqq \set{\vvbr{\Ctx}^\CwA \hookrightarrow \vvbr{\Ctx \vdash \Type}^\CwA,
                \vvbr{\Ctx \vdash \Type}^\CwA \hookrightarrow \vvbr{\Ctx \vdash \Term : \Type}^\CwA} \\
    \McI^\CwA_{\bT} &\coloneqq \set{\vvbr{\Ctx}^\CwA_{\bT} \hookrightarrow
                            \vvbr{\Ctx \vdash \Type}^\CwA_{\bT},
                     \vvbr{\Ctx \vdash \Type}^\CwA_{\bT} \hookrightarrow
                            \vvbr{\Ctx \vdash \Term : \Type}^\CwA_{\bT}}
  \end{align*}
\end{definition}

\begin{proposition}[{\cite[Proposition 4.13.4]{kapulkin-lumsdaine-18}}]\label{prop:kl-4.13.4}
  The inclusion of $\CxlCat_{\ITT+\UIP}$ into $\CwA_{\ITT+\UIP}$ sends
  $\McI^\CxlCat_{\ITT+\UIP}$-cell complexes to $\McI^\CwA_{\ITT+\UIP}$-cell
  complexes.
  \def\endingmark{\qedsymbol}
\end{proposition}

\subsection{Classification of Types in Cellular Models}\label{subsec:cofib-constr}
We now shift our attention to proving the classification of types property for
$\McI^\CwA_\bT$-cell complexes, where $\bT$ is either $\ITT$ or $\ITT+\UIP$.

\begin{definition}\label{def:ncwa}
  The category of $\NCwA$ consists of objects $\CwA_\bT$-structures
  $(\bC, \Ty_\bC, \pi^\bC, 1_\bC,\Pi^\bC,\Sigma^\bC,\Id^\bC)$ and a subset
  $G \subseteq \coprod_{\Theta \in \bC}{\Ty_\bC\Theta}$ of types in $\bC$.
  Maps between $\NCwA$s $(\bC,G) \to (\bD,H)$ consists of maps of $\CwA$s
  $F \colon \bC \to \bD$ taking types in $G$ to types in $H$.
  Explicitly: $(F\Theta, F^{\Ty\relax}_\Theta T) \in H$ for each $(\Theta,T) \in G$.
\end{definition}

\begin{definition}\label{def:is-Pi-Sigma-Id}
  Given $\bC \in \CwA_\bT$ along with $\Gamma \in \bC$ and
  $A \in \Ty_\bC\Gamma$, we say:
  \begin{itemize}
    \item \emph{$A$ is a $\Pi$-type} when $A = \Pi(X,Y)$ for some
    $X \in \Ty_\bC \Gamma$ and $Y \in \Ty_\bC\Gamma.X$.
    \item \emph{$A$ is a $\Sigma$-type} when $A = \Sigma(X,Y)$ for some
    $X \in \Ty_\bC \Gamma$ and $Y \in \Ty_\bC\Gamma.X$.
    \item \emph{$A$ is an $\Id$-type} when $A = f^*\Id_X$ for some context
    $\Delta \in \bC$, type $X \in \Ty_\bC \Delta$ and map
    $f \colon \Gamma \to \Delta.X.X$.
  \end{itemize}
\end{definition}

\begin{construction}\label{constr:norm-cwa}
  Let $(\bC, G) \in \NCwA_\bT$.
  Define $\Norm(\bC,G) = (\bC, \Ty_{\Norm(G)}, \pi^{\Norm(G)}, 1_\bC) \in \CwA$,
  which we also denote by $\Norm(G)$ for brevity, with underlying category $\bC$
  and the same choice of the terminal context as in $\bC$.
  The types over each $\Gamma \in \bC$ in $\Norm(G)$ is the disjoint union of
  all the $\Pi,\Sigma,\Id$-types in $\Gamma$ along with the comma category
  $\Gamma \downarrow (G \hookrightarrow \coprod_{\Theta \in \bC}\Ty_\bC\Theta
  \to \bC)$, or more explicitly,
  \begin{align*}
    \Ty_{\Norm(G)}\Gamma \coloneqq \set{(\Gamma \xrightarrow{g} \Theta, T \in \Ty_\bC\Theta) ~|~ (\Theta,T) \in G}
    \sqcup \set{A \in \Ty_\bC\Gamma ~|~ A \text{ is a $\Pi,\Sigma,\Id$-type}}
  \end{align*}
  Given a map $f \colon \Gamma' \to \Gamma \in \bC$ the functorial action
  $f^* \colon \Ty_{\Norm(G)}\Gamma \to \Ty_{\Norm(G)}\Gamma'$ is given by
  \begin{equation*}
    \begin{tikzcd}
      {(g,T)} & {(f^*g,T)} \\
      {\Ty_{\Norm(G)}\Gamma} & {\Ty_{\Norm(G)}\Gamma'} \\
      {\text{$\Pi,\Sigma,\Id$-type } A} & {\text{$\Pi,\Sigma,\Id$-type } f^*A}
      \arrow[maps to, from=1-1, to=1-2]
      \arrow[maps to, from=1-1, to=2-1]
      \arrow[maps to, from=1-2, to=2-2]
      \arrow[from=2-1, to=2-2]
      \arrow[maps to, from=3-1, to=2-1]
      \arrow[maps to, from=3-1, to=3-2]
      \arrow[maps to, from=3-2, to=2-2]
    \end{tikzcd}
  \end{equation*}

  Context comprehension for $(g,T) \in \Ty_{\Norm(G)}\Gamma$ is defined as
  $\pi^{\Norm(G)}_{(g,T)} \coloneqq \pi^{\bC}_{g^*T}$ and
  $\pi^{\Norm(G)}_A \coloneqq \pi^\bC_A$ for $A \in \Ty_\bC\Gamma$ some
  $\Pi,\Sigma,\Id$-type.
  For a map $f \colon \Gamma' \to \Gamma \in \bC$, the connecting map is defined
  as $f.(g,T) \coloneqq f.g^*T$ using the $\CwA$-structure of $\bC$ and
  $f.A \coloneqq f.A$ is the connecting map in $\bC$ for
  $A \in \Ty_{\Norm(G)}\Gamma$ some $\Pi,\Sigma,\Id$-type.
  \begin{equation*}
    \begin{tikzcd}[cramped]
      \Gamma'.(\Gamma' \xrightarrow{f} \Gamma \xrightarrow{g} \Theta, T \in \Ty_\bC\Theta)
      \ar[d, two heads, "{\pi^{\Norm(G)}_{f^*(g,T)}}"']
      \ar[r, "{f.(g,T)}"]
      \ar[rd, "\lrcorner"{pos=0}, phantom]
      &
      \Gamma.(\Gamma \xrightarrow{g} \Theta, T \in \Ty_\bC\Theta)
      \ar[d, two heads, "{\pi^{\Norm(G)}_{(g,T)}}"]
      \\
      \Gamma'
      \ar[r, "{f}"']
      &
      \Gamma
    \end{tikzcd}
    \coloneqq
    \begin{tikzcd}[cramped]
      \Gamma'.f^*g^*T
      \ar[d, two heads, "{\pi^\bC_{f^*g^*T}}"']
      \ar[r, "{f.g^*T}"]
      \ar[rd, "\lrcorner"{pos=0}, phantom]
      &
      \Gamma.g^*T
      \ar[d, two heads, "{\pi^\bC_{g^*T}}"]
      \\
      \Gamma'
      \ar[r, "{f}"']
      &
      \Gamma
    \end{tikzcd}
    \in \bC
  \end{equation*}

  %
  Define
  $\ev(G) = (\bC \xrightarrow{=} \bC, \Ty_{\Norm(G)} \xrightarrow{\ev(G)^{\Ty\relax}}
  \Ty_\bC) \colon \Norm(G) \to \bC \in \CwA$ that is the identity map on
  underlying categories and the natural transformation $\ev(G)^{\Ty\relax}$ at
  component $\Gamma \in \bC$ is defined as
  \begin{align*}
    \ev(G)^{\Ty\relax}_\Gamma(\Gamma \xrightarrow{g} \Theta, T \in \Ty_{\bC}\Theta) \coloneqq g^*T \in \Ty_\bC\Gamma &&
    \ev(G)^{\Ty\relax}_\Gamma(\text{$\Pi,\Sigma,\Id$-type } A \in \Ty_\bC\Gamma) \coloneqq A \in \Ty_\bC\Gamma
  \end{align*}
  Naturality of $\ev(G)^{\Ty\relax}$ follows from naturality of precomposition and
  naturality of $\Ty_\bC$.
  And by definition of context comprehension of $\Norm(G)$, one has that
  $\pi_{(g,T)}^{\Norm(G)} = \pi^\bC_{g^*T} = \pi_{\ev(K)^{\Ty\relax}(g,T)}^\bC$, so
  $\ev(K)$ is indeed a map of $\CwA$s.

  Put $\Pi,\Sigma,\Id$-type structures on $\Norm(G)$ by taking, for each
  $X \in \Ty_{\Norm(G)}\Gamma$ and
  $Y \in \Ty_{\Norm(G)}(\Gamma.X) = \Ty_{\Norm(G)}(\Gamma.\ev(G)^{\Ty\relax} X)$,
  \begin{align*}
    \Pi^{\Norm(G)}(X,Y) \coloneqq \Pi^\bC(\ev(G)^{\Ty\relax} X,\ev(G)^{\Ty\relax} Y)
    && \Sigma^{\Norm(G)}(X,Y) \coloneqq \Sigma^\bC(\ev(G)^{\Ty\relax} X,\ev(G)^{\Ty\relax} Y)
  \end{align*}
  \begin{align*}
    \Id^{\Norm(G)}_X \coloneqq \Id^\bC_{\ev(G)^{\Ty\relax} X}
  \end{align*}
  Because logical structure in $\bC$ is stable and $\ev(G)$ is functorial, this
  definition of $\Pi,\Sigma,\Id$-types in $\Norm(G)$ is stable as well.
  The necessary maps for $\Pi,\Sigma,\Id$-types are inherited from $\bC$, as
  $\Norm(G)$ has the underlying category the same as $\bC$.
  This puts a $\CwA_\bT$ structure on $\Norm(G)$ and by construction,
  $\ev(G) \colon \Norm(G) \to \bC$ is a map of $\CwA_\bT$-structures.

  Furthermore, the construction $\Norm \colon \NCwA_\bT \to \CwA_\bT$ is
  functorial.
  Given $F \colon (\bC,G) \to (\bD,H) \in \NCwA_\bT$, the map
  \begin{align*}
    \Norm(F) = (F, \Norm(F)^{\Ty\relax}) \colon \Norm(\bC,G) \to \Norm(\bD,H)
  \end{align*}
  has action on underlying categories given by $F$.
  The action on types at each $\Gamma \in \bC$ is
  $\Norm(F)^{\Ty\relax}_\Gamma \colon \Ty_{\Norm(G)}\Gamma \to \Ty_{\Norm(H)}F\Gamma$
  defined as
  \begin{align*}
    \Norm(F)^{\Ty\relax}_\Gamma(\Gamma \xrightarrow{g} \Theta, T) \coloneqq
    (F\Gamma \xrightarrow{Fg} F\Theta, F^{\Ty\relax}_\Theta T)
    &&
       \Norm(F)^{\Ty\relax}_\Gamma(\text{$\Pi,\Sigma,\Id$-type } A) \coloneqq
       F^{\Ty\relax}_\Gamma A
  \end{align*}
  Because $(F\Theta,F_\Theta^{\Ty\relax} T) \in H$ for each $(\Theta,T)$ and $F$
  preserves $\Pi,\Sigma,\Id$-types, it is indeed that
  $\Norm(F)^{\Ty\relax}_\Gamma(g,T) \in \Norm(H)_{F\Gamma}$ and $\Norm(F)_\Gamma^{\Ty\relax} A$
  remains a $\Pi,\Sigma,\Id$-type when $A$ is a $\Pi,\Sigma,\Id$-type.
  Naturality of $\Norm(G)^{\Ty\relax}$ follows from naturality of precomposition and
  functoriality of $F$.
  And $\Norm(F)$ respects context comprehension because
  $F\pi_{(g,T)}^{\Norm(G)} = F\pi_{g^*T}^\bC
  = \pi_{F^{\Ty\relax}_\Gamma g^*T}^\bD
  = \pi_{(Fg)^*(F^{\Ty\relax}_\Theta T)}^\bD
  = \pi_{(Fg,F^{\Ty\relax}_\Theta T)}^{\Norm(H)}
  $, so $\Norm(F)$ is indeed a map of $\CwA$s.
  Finally, $\Norm(F)$ preserves the logical structures $\Sigma,\Pi,\Id$ because
  $F$ preserves these logical structures.
  Functoriality of $\Norm \colon \NCwA_\bT \to \CwA_\bT$ follows by functoriality of
  application.
\end{construction}

\begin{lemma}\label{lem:ev-nat}
  Let $F \colon (\bC,G) \to (\bD,H) \colon \NCwA_\bT$.
  Then,
  \begin{equation*}
    \begin{tikzcd}[cramped, column sep=large]
      {\Norm(G)} & {\Norm(H)} \\
      \bC & \bD
      \arrow["{\Norm(F)}", from=1-1, to=1-2]
      \arrow["{\ev(G)}"', from=1-1, to=2-1]
      \arrow["{\ev(H)}", from=1-2, to=2-2]
      \arrow["{F}"', from=2-1, to=2-2]
    \end{tikzcd}
  \end{equation*}
\end{lemma}
\begin{proof}
  Because the action on underlying categories of $\ev(G),\ev(H)$ are both
  identity while the underlying action on underlying categories of $\Norm(F)$
  is $F$, the diagram on the right commutes on the level of underlying
  categories.
  Now, fix $\Gamma \in \bC$ along with $(\Gamma \xrightarrow{g} \Theta, T)$ with
  $(\Theta,T) \in G$.
  Then, following the top-right path gives
  $(g,T) \mapsto (Fg, F_\Theta^{\Ty\relax} T) \mapsto (Fg)^*(F_\Theta^{\Ty\relax} T)$ while the
  bottom-left path gives $(g,T) \mapsto g^*T \mapsto F^{\Ty\relax}_\Gamma(g^*T)$ and
  $(Fg)^*(F_\Theta^{\Ty\relax} T) = F^{\Ty\relax}_\Gamma(g^*T)$ by naturality of $F^{\Ty\relax}$.
  Finally, if $A \in \Ty_\bC\Gamma \hookrightarrow \Ty_{\Norm(G)}\Gamma$ is some
  $\Pi,\Sigma,\Id$-type then the top-right and bottom-left path both give
  $F^{\Ty\relax} A$.
\end{proof}

\begin{lemma}\label{lem:ev-tm-lift}
  For $(\bC,G) \in \NCwA_\bT$, the map
  $\ev(G) \colon \Norm(\bC,G) \to \bC \in \CwA_\bT$ has the strict term lifting
  property.
  \begin{equation*}
    \begin{tikzcd}
      \vvbr{\Ctx \vdash \Type}
      \ar[d, hook]
      \ar[r]
      &
      \Norm(G)
      \ar[d, "{\ev(G)}"]
      \\
      \vvbr{\Ctx \vdash \Term : \Type}
      \ar[r]
      \ar[ur, dashed]
      &
      \bC
    \end{tikzcd}
  \end{equation*}
\end{lemma}
\begin{proof}
  A map $\vvbr{\Ctx \vdash \Type} \to \Norm(G)$ is a choice of context
  $\Gamma \in \bC$ along with either:
  \begin{itemize}
    \item Some type $A$ in $\Gamma$ that is some $\Pi,\Sigma,\Id$-type; or
    \item Some $\Theta \in \bC$ and $T \in \Ty_\bC\Theta$ and
    $g \colon \Gamma \to \Theta \in \bC$ where $(\Theta,T) \in G$.
  \end{itemize}

  In the first case, $\ev(G)$ sends $A$ to the type $A$ itself, and the bottom
  map $\vvbr{\Ctx \vdash \Term : \Type} \to \bC$ selects a term of type $A$ in
  $\bC$, and because maps in $\Norm(G)$ are exactly maps in $\bC$, one can
  simply use the term selected by the bottom map as the lift.

  In the second case, composing with $\ev(K)$ selects the type
  $g^*T \in \Ty_\bC\Gamma$.
  Suppose one has a term of said type, which is a section
  $t \colon \Gamma \to \Gamma.g^*T$ of
  $\pi_{g^*T}^\bC \colon \Gamma.f^*T$.
  Then, by definition, $t$ is a also a section of
  $\pi_{(g,T)}^{\Norm(G)} = \pi_{g^*T}^\bC$, so we may take $t$ to be the lift
  because the action on underlying categories of $\ev(G)$ is the identity.
\end{proof}




\begin{definition}\label{def:base-decompose}
  Fix a map $F \colon \bD \to \Norm(\bC,G) \in \CwA_\bT$ for
  $(\bC,G) \in \NCwA_\bT$ and $\bD \in \NCwA$.
  Let $\Delta \in \bD$ and $X \in \Ty_\bD\Delta$.
  If
  $F^{\Ty\relax}_\Delta X \in \coprod_{(\Theta,T) \in G} \bC(F\Delta,\Theta)
  \hookrightarrow \Ty_{\Norm(G)}F\Delta$ is not a $\Pi$-, $\Sigma$-, or
  $\Id$-type then we write $F_\Delta^\Ctx,F_\Delta^\Nf,F_\Delta^\Sub$ for the
  various projections
  \begin{align*}
    F^{\Ty\relax}_\Delta X = (F\Delta \xrightarrow{F_\Delta^\Sub X} F_\Delta^\Ctx X, F_\Delta^\Nf X)
  \end{align*}
\end{definition}

\begin{lemma}\label{lem:into-norm}
  Fix a map $F \colon \bD \to \Norm(\bC,G) \in \CwA_\bT$ for
  $(\bC,G) \in \NCwA_\bT$ and $\bD \in \CwA$ along with some $\Delta \in \bD$
  and $X \in \Ty_\bD\Delta$.
  Suppose that $F^{\Ty\relax}_\Delta X$ is not a $\Pi,\Sigma,\Id$-type.
  Then, for each $f \colon \Delta' \to \Delta$, neither is $F^{\Ty\relax}_\Delta(f^*X)$.
  Moreover,
  \begin{center}
    \begin{minipage}{0.3\linewidth}
      \begin{equation*}
        F^\Ctx_\Delta X = F^\Ctx_{\Delta'}(f^*X)
      \end{equation*}
    \end{minipage}
    \begin{minipage}{0.3\linewidth}
      \begin{equation*}
        \begin{tikzcd}
          F\Delta' & F\Delta & F^\Ctx_\Delta X
          \ar[from=1-1, to=1-2, "{Ff}"]
          \ar[from=1-2, to=1-3, "{F_\Delta^\Sub X}"]
          \ar[from=1-1, to=1-3, bend right, "{F_\Delta^\Sub(f^*X)}"']
        \end{tikzcd}
      \end{equation*}
    \end{minipage}
    \begin{minipage}{0.3\linewidth}
      \begin{equation*}
        F_\Delta^\Nf X = F_{\Delta'}^\Nf(f^*X)
      \end{equation*}
    \end{minipage}
  \end{center}
\end{lemma}
\begin{proof}
  Write $F_\Delta^{\Ty\relax} X \in \Ty_{\Norm(G)}F\Delta$ as
  \begin{align*}
    F^{\Ty\relax}_\Delta X = (F\Delta \xrightarrow{F_\Delta^\Sub X} F_\Delta^\Ctx X, F_\Delta^\Nf X)
  \end{align*}
  Then,
  Given such $F$ and family of triples
  $((F_\Delta^\Ctx,F_\Delta^\Nf,F_\Delta^\Sub))_\Delta$ as above, one may define
  a map $\bD \to \Norm(G) \in \CwA$ whose action on underlying categories is
  $F \colon \bD \to \bC$.
  The action on types is given as
  \begin{align*}
    \Ty_\bD\Delta \ni A \mapsto (F\Delta \xrightarrow{F_\Delta^\Sub A} F_\Delta^\Ctx A, F_\Delta^\Nf A)
    \in \Ty_{\Norm(G)}(F\Delta)
  \end{align*}
  so that $F^{\Ty\relax}(f^*X) = (Ff)^*F^{\Ty\relax} X = ((Ff)^*F^\Sub X, F^\Nf X)$.
  This means in particular that $F^{\Ty\relax}(f^*X)$ is not a $\Pi,\Sigma,\Id$-type.
  By the definition of the functorial action of $\Ty_{\Norm(G)}$, one has
  $(Ff)^*F^\Sub X = F^\Sub(f^*X)$ and $F^\Nf(f^*X) = F^\Nf X$.
\end{proof}

\begin{lemma}\label{lem:sec-ev-char}
  Suppose a map $S \colon \bC \to \Norm(G)$ for $(\bC,G) \in \NCwA_\bT$ is a
  section of $\ev(G)$.
  Then, its action on underlying categories is the identity and for each
  $(\Gamma \in \bC, X \in \Ty_\bC\Gamma) \in G$ where $X$ is not a
  $\Pi,\Sigma,\Id$-type, neither is $S^{\Ty\relax}_\Gamma A$ and one has
  \begin{align*}
    (S^\Sub X)^*(S^\Nf X) = X
  \end{align*}
\end{lemma}
\begin{proof}
  Because the action on underlying categories of $\ev(G)$ is the identity, the
  action on underlying categories of $S$ is forced to be the identity.
  By definition, $S$ being a map of $\CwA_\bT$ means that it must preserve
  $\Pi,\Sigma,\Id$-types.
  Because $S$ is a section of $\ev(G)$ and the action of $\ev(G)$ on
  $\Pi,\Sigma,\Id$-types is the identity, if $X$ is not a $\Pi,\Sigma,\Id$-type then it
  cannot be mapped to a $\Pi,\Sigma,\Id$-type.
  Hence, one may write $S^{\Ty\relax} X$ as $S^{\Ty\relax} X = (S^\Sub X, S^\Nf X)$.
  And again because $S$ is a section, $\ev(K)^{\Ty\relax} S^{\Ty\relax} X = X$ for each
  $(\Gamma \in \bC, X \in \Ty_\bC\Gamma)$, one has
  $\ev(K)^{\Ty\relax} S^{\Ty\relax} X = \ev(K) (S^\Sub X, S^\Nf X) = (S^\Sub X)^*(S^\Nf X)$, so
  $(S^\Sub X)^*(S^\Nf X) = X$, as claimed.
\end{proof}

\begin{definition}\label{def:type-normalise}
  Given $(\bC,G) \in \NCwA$ and section $S \colon \bC \to \Norm(G)$ of
  $\ev(G) \colon \Norm(G) \to \bC$, a pair
  $(\Gamma \in \bC, X \in \Ty_\bC\Gamma)$ is an \emph{$S$-normal form} when
  $S^{\Ty\relax} X$ is not a $\Pi,\Sigma,\Id$-type and $S^\Sub X = \id_\Gamma$.
  In particular, $S^\Nf X = X$.
  \begin{align*}
    (\Gamma \xrightarrow{S^\Sub X} S^\Ctx X, S^\Nf X)
    =
    (\Gamma \xrightarrow{=} \Gamma, X)
  \end{align*}
  This section $S$ is a \emph{type-normalisation structure relative to
    $(\bC,G)$} when each $(\Theta,T) \in G$ is an $S$-normal form.
  A \emph{type-normalisation structure} consists of $(\bC,G) \in \NCwA$ along
  with a type-normalisation structure $S$ relative to $(\bC,G)$.
\end{definition}

\begin{definition}\label{def:types-sep-faith}
  Let $\bC$ be a $\CwA$.
  Two pairs of context-type pairs
  $(\Theta_i,T_i) \in \coprod_{\Gamma \in \bC}\Ty_\bC\Gamma$ for $i=1,2$ are
  \emph{separated} if for all $\Gamma \in \bC$ and
  $f_i \colon \Gamma \to \Theta_i$ one has $f_1^*T_1 \neq f_2^*T_2$.
  A context-type pair $(\Theta,T) \in \coprod_{\Gamma \in \bC}\Ty_\bC\Gamma$ is
  \emph{faithful} if $f_1^*T = f_2^*T$ implies $f_1 = f_2$ for all
  $f_1, f_2 \colon \Gamma \rightrightarrows \Theta$.

  A set $G \subseteq \coprod_{\Gamma \in \bC} \Ty_\bC\Gamma$ is \emph{faithfully
    separated} when each of its element is faithful and each of its pairwise
  distinct elements are separated.
\end{definition}
In other words, two context-type pairs are separated when they are in different
connected components of the category of elements of the presheaf of types, and a
faithful type is such that its substitution along different maps can never be
the same type.

In particular, maps between $\CwA$s are maps between the categories of elements
of the presheaf of types, which must preserve connected components.
As a result, all maps between $\CwA$s reflect separation, in the following
sense.
\begin{lemma}\label{lem:sep-reflect}
  Let $\bC$ be a $\CwA$ with context-type pairs
  $(\Theta_i \in \bC, T_i \in \Ty_\bC{\Theta_i})$ for $i=1,2$.
  Suppose one has a map $\bC \to \bD \in \CwA$ where $(F\Theta_1, F^{\Ty\relax} T_1)$
  and $(F\Theta_2, F^{\Ty\relax} T_2)$ are separated in $\bD$.
  Then $(\Theta_1,T_1)$ and $(\Theta_2,T_2)$ must also be separated in $\bC$.
\end{lemma}
\begin{proof}
  Take maps $f_i \colon \Gamma \to \Theta_i$ for $i=~1,2$ so that by the
  assumption that $F^{\Ty\relax} T_1$ is separated from $F^{\Ty\relax} T_2$ and the fact that
  $F$ respects substitution,
  $F^{\Ty\relax}(f_1^*T_1) = (Ff_1)^*(F^{\Ty\relax} T_1) \neq (Ff_2)^*(F^{\Ty\relax} T_2) =
  F^{\Ty\relax}(f_2^*T_2)$.
  In particular, if $f_1^*T_1 = f_2^*T_2$ then
  $F^{\Ty\relax}(f_1^*T_1) = F^{\Ty\relax}(f_2^*T_2)$, which we have concluded is impossible.
\end{proof}

Reflection of faithfulness, on the other hand, requires an additional assumption
that the map of underlying categories is a faithful functor.
\begin{lemma}\label{lem:faith-reflect}
  Suppose $F \colon \bC \to \bD \in \CwA$ has the underlying functor between
  underlying categories faithful.
  Then, $(\Theta \in \bC, T \in \Ty_\bC\Theta)$ is faithful in $\bC$ if
  $(F\Theta, F^{\Ty\relax} T)$ is faithful in $\bD$.
\end{lemma}
\begin{proof}
  By faithfulness of $F$ and $F^{\Ty\relax} T$, for all
  $f_1,f_2 \colon \Theta \rightrightarrows \Gamma$, if $f_1^*T = f_2^T$ then
  $(Ff_1)^*(F^{\Ty\relax} T) = F^{\Ty\relax}(f_1^*T) = F^{\Ty\relax}(f_2^*T) = (Ff_2)^*(F^{\Ty\relax} T)$ so
  $Ff_1 = Ff_2$ and so $f_1 = f_2$.
\end{proof}

\begin{lemma}\label{lem:norm-faith-sep}
  Let $(\bC,G)$ be a $\NCwA_\bT$-structure.
  Then, the set of types $\set{(\id_\Theta, T)}_{(\Theta,T) \in G}$ is
  faithfully separated in $\Norm(\bC,G) \in \CwA_\bT \hookrightarrow \CwA$.
\end{lemma}
\begin{proof}
  Fix $(\Theta_1,T_1) \neq (\Theta_2,T_2) \in G$.
  Then, for each $f_i \colon \Gamma \to \Theta_i$, the action on substitution in
  $\Norm(\bC,G)$ gives $(f_i, T_i)$ for $i=1,2$, so
  $f_1^*(\Theta_1,T_1) \neq f_2^*(\Theta_2,T_2)$ because either the codomain of
  the map in the first projection or the type in the second projection is
  different.
  Similarly, for each $(\Theta,T) \in G$, one has $f^*(\id_\Theta,T) = (f,T)$,
  so faithfulness follows.
\end{proof}

\begin{lemma}\label{lem:norm-struct-extend}
  Let $F \colon (\bC,G) \to (\bD,H) \in \NCwA_\bT$ and suppose $S$ and $L$ as below
  are sections of $\ev(\bC,G)$ and $\ev(\bD,G)$.
  \begin{equation*}
    \begin{tikzcd}
      \bC \ar[r, "{F}"] \ar[d, "{S}"']
      &
      \bD \ar[d, "{L}"]
      \\
      \Norm(\bC,G)
      \ar[r, "{\Norm(F)}"']
      &
      \Norm(\bD,H)
    \end{tikzcd}
  \end{equation*}
  Then, for each $\Gamma \in \bC$ and $X \in \Ty_\bC\Gamma$ such that $S^{\Ty\relax} X$
  is not a $\Pi,\Sigma,\Id$-type, neither is $F^{\Ty\relax} X$ and one has
  \begin{align*}
    (F(S^\Sub X), F^{\Ty\relax}(S^\Nf X)) = (L^\Sub(F^{\Ty\relax} X), L^\Nf(F^{\Ty\relax} X))
  \end{align*}
  In particular, $F$ sends $S$-normal forms to $L$-normal forms.
  Moreover when $H \subseteq \set{(F\Theta, F^{\Ty\relax} T)}_{(\Theta,T) \in G}$, if
  $S$ is a type-normalisation structure relative to $(\bC,G)$ then $L$ is a
  type-normalisation structure relative to $(\bD,H)$.
\end{lemma}
\begin{proof}
  For each $\Gamma \in \bC$ and $X \in \Ty_\bC\Gamma$, the bottom-left path
  gives
  $(\Norm(F) \cdot S)^{\Ty\relax} X = \Norm(F)^{\Ty\relax}(S^\Sub A, S^\Nf A) = (F(S^\Sub X),
  F^{\Ty\relax}(S^\Nf X))$ and the top-right path gives $(LF)^{\Ty\relax} X$.
  Commutativity says that they are equal
  \begin{align*}
    (F(S^\Sub X), F^{\Ty\relax}(S^\Nf X)) = (LF)^{\Ty\relax} X = (L^\Sub(F^{\Ty\relax} X), L^\Nf(F^{\Ty\relax} X))
  \end{align*}
  If $X$ is $S$-normal then $S^\Sub X = \id\relax$, so
  $L^\Sub(F^{\Ty\relax} X) = F(S^\Sub X) = \id\relax$, which says that $F$ sends
  $S$-normal forms to $L$-normal forms.
  So if everything in $H$ is the image under $F$ of a type in $G$ and all types
  in $G$ are $S$-normal they are also $L$-normal.
\end{proof}

\begin{proposition}\label{prop:add-tm-norm}
  Fix a cell as follows.
  \begin{equation*}
    \begin{tikzcd}
      {\vvbr{\Ctx \vdash \Type}} & {\vvbr{\Ctx \vdash \Term : \Type}} \\
      \bC & {\bC \cup_{\Theta.A} \Term}
      \arrow[hook, from=1-1, to=1-2]
      \arrow["{\Theta_0.T_0}"', from=1-1, to=2-1]
      \arrow["{\Term_{\Theta_0.T_0}}", from=1-2, to=2-2]
      \arrow["F"', from=2-1, to=2-2]
      \ar[from=2-2, to=1-1, "{\ulcorner}"{pos=0}, phantom]
    \end{tikzcd} \in \CwA_\bT
  \end{equation*}
  Suppose one has a type-normalisation structure $S \colon \bC \to \Norm(G)$
  with respect to $G \subseteq \coprod_{\Gamma \in \bC}\Ty_\bC\Gamma$.
  Then, one has a type-normalisation structure
  $S \cup_{\Theta_0.T_0} \Term \colon \bC \cup_{\Theta_0.T_0} \Term \to \Norm(FG)$ with
  respect to
  $FG \coloneqq \set{(F\Theta, F^{\Ty\relax} T)}_{(\Theta,T) \in G}$ such that
  \begin{equation*}
    \begin{tikzcd}
      \bC & {\bC \cup_{\Theta_0.T_0} \Term} \\
      {\Norm(G)} & {\Norm(FG)}
      \arrow["F", from=1-1, to=1-2]
      \arrow["S"', from=1-1, to=2-1]
      \arrow["{S \cup_{\Theta_0.T_0} \Term}", dashed, from=1-2, to=2-2]
      \arrow["{\Norm(F)}"', from=2-1, to=2-2]
    \end{tikzcd}
  \end{equation*}
  Moreover, if $G$ is faithfully separated in $\bC$ then $FG$ is faithfully
  separated in $\bC \cup_{\Theta_0.T_0} \Term$.
\end{proposition}
\begin{proof}
  By \Cref{lem:ev-tm-lift}, one has a lift $\widetilde{\Term_{\Theta_0.T_0}}$ as
  below:
  \begin{equation*}
    \begin{tikzcd}
      {\vvbr{\Ctx \vdash \Type}} & \bC & {\Norm(G)} && {\Norm(FG)} \\
      {\vvbr{\Ctx \vdash \Term : \Type}} &&&& {\bC \cup_{\Theta_0.T_0} \Term}
      \arrow["{\Theta_0.T_0}", from=1-1, to=1-2]
      \arrow[hook, from=1-1, to=2-1]
      \arrow["S"{}, from=1-2, to=1-3]
      \arrow["{\Norm(F)}", from=1-3, to=1-5]
      \arrow["{\ev(FG)}", from=1-5, to=2-5]
      \arrow["{\widetilde{\Term_{\Theta_0.T_0}}}"{description}, dashed, from=2-1, to=1-5]
      \arrow["{\Term_{\Theta_0.T_0}}"', from=2-1, to=2-5]
    \end{tikzcd}
  \end{equation*}
  whose existence gives rise to a map $S \cup_{\Theta_0.T_0} \Term$ as below.
  \begin{equation*}
    \begin{tikzcd}
      {\vvbr{\Ctx \vdash \Type}} & {\vvbr{\Ctx \vdash \Term : \Type}} \\
      \bC & {\bC \cup_{\Theta_0.T_0} \Term} \\
      & {\Norm(G)} & {\Norm(FG)} \\
      && \bC & {\bC \cup_{\Theta_0.T_0} \Term}
      \arrow[hook, from=1-1, to=1-2]
      \arrow["{\Theta_0.T_0}"', from=1-1, to=2-1]
      \arrow["{\Term_{\Theta_0.T_0}}"{description}, from=1-2, to=2-2]
      \arrow["{\widetilde{\Term_{\Theta_0.T_0}}}"{description}, dashed, from=1-2, to=3-3]
      \arrow["{\Term_{\Theta_0.T_0}}", curve={height=-24pt}, from=1-2, to=4-4]
      \arrow["F"{description}, from=2-1, to=2-2]
      \arrow["S"{description}, from=2-1, to=3-2]
      \arrow["{=}"', curve={height=24pt}, from=2-1, to=4-3]
      \arrow["{S \cup_{\Theta_0.T_0} \Term}"{description}, dashed, from=2-2, to=3-3]
      \arrow["{\Norm(F)}"{description}, from=3-2, to=3-3]
      \arrow["{\ev(G)}"{description}, from=3-2, to=4-3]
      \arrow["{\ev(FG)}"{description}, from=3-3, to=4-4]
      \arrow["F"', from=4-3, to=4-4]
      \ar[from=2-2, to=1-1, "{\ulcorner}"{pos=0}, phantom]
    \end{tikzcd}
  \end{equation*}
  The bottom-most parallelogram
  $F \cdot \ev(G) = \ev(FG) \cdot \Norm(F)$ above commmutes by
  \Cref{lem:ev-nat}, so by the universal property of the pushout,
  $S \cup_{\Theta_0.T_0} \Term$ is a section of $\ev(FG)$.
  Finally, by \Cref{lem:norm-struct-extend} and the fact that
  $\Norm(F) \cdot S = (S \cup_{\Theta_0.T_0} \Term) \cdot F$, because $S$ is a
  type-normalisation structure relative to $G$ and
  $FG \subseteq \set{(F\Theta, F^{\Ty\relax} T)}_{(\Theta,T) \in G}$ (by definition the
  inclusion is in fact an equality), it follows that
  $S \cup_{\Theta_0.T_0} \Term$ is a type-normalisation structure relative to
  $FG$.

  Finally, assuming that $G$ is faithfully separated in $\bC$, we must check
  that $FG$ is faithfully separated in $\bC \cup_{\Theta_0.T_0} \Term$.
  Because $S \cup_{\Theta_0.T_0} \Term$ is a section of $\ev(FG)$, its action on
  underlying categories is the identity, so it is in particular faithful.
  By \Cref{lem:faith-reflect,lem:sep-reflect}, it reflects separation and
  faithfulness, so one may just check that the image of $FG$ under
  $S \cup_{\Theta_0.T_0} \Term$ is faithfully separated in $\Norm(FG)$.
  But then $S \cup_{\Theta_0.T_0} \Term$ is a type-normalisation structure with
  respect to $FG$, which means by definition that
  $(S \cup_{\Theta_0.T_0} \Term)^{\Ty\relax}(F^{\Ty\relax} T) = (\id_\Theta, F^{\Ty\relax} T)$ for each
  $(\Theta,T) \in G$.
  By \Cref{lem:norm-faith-sep}, the set
  $\set{(S \cup_{\Theta_0.T_0} \Term)^{\Ty\relax}(F_\Theta^{\Ty\relax} T)}_{(\Theta,T) \in G} =
  \set{(\id_\Theta, F^{\Ty\relax} T)}_{(\Theta,T) \in G}$ is faithfully separated in
  $\Norm(FG)$, and the result follows.
\end{proof}

\begin{proposition}\label{prop:add-ty-norm}
  Fix a cell as follows.
  \begin{equation*}
    \begin{tikzcd}
      {\vvbr{\Ctx}} & {\vvbr{\Ctx \vdash \Type}} \\
      \bC & {\bC \cup_{\Theta_0} \Type}
      \arrow[hook, from=1-1, to=1-2]
      \arrow["{\Theta_0}"', from=1-1, to=2-1]
      \arrow["{F\Theta_0.\Type_{F\Theta_0}}"{}, from=1-2, to=2-2]
      \arrow["F"', from=2-1, to=2-2]
      \ar[from=2-2, to=1-1, "{\ulcorner}"{pos=0}, phantom]
    \end{tikzcd} \in \CwA_\bT
  \end{equation*}
  Suppose one has a type-normalisation structure $S \colon \bC \to \Norm(G)$
  with respect to $G \subseteq \coprod_{\Gamma \in \bC}\Ty_\bC\Gamma$.
  Then, one has a type-normalisation structure
  $S \cup_{\Theta_0} \Type \colon \bC \cup \Type \to \Norm((FG)^+)$ with respect to
  \begin{align*}
    (FG)^+ \coloneqq \set{(F\Theta, F^{\Ty\relax} T)}_{(\Theta,T) \in G} \sqcup
    \set{(F\Theta_0, \Type_{F\Theta_0})} \subseteq \coprod_{\Gamma \in \bC \cup \Type}
    \Ty_{\bC \cup \Type}\Gamma
  \end{align*}
  such that
  \begin{equation*}
    \begin{tikzcd}
      \bC & {\bC \cup_{\Theta_0} \Type} \\
      {\Norm(G)} & {\Norm((FG)^+)}
      \arrow["F", from=1-1, to=1-2]
      \arrow["S"', from=1-1, to=2-1]
      \arrow["{S \cup_{\Theta_0} \Type}", dashed, from=1-2, to=2-2]
      \arrow["{\Norm(F)}"', from=2-1, to=2-2]
    \end{tikzcd}
  \end{equation*}
  Moreover, if $G$ is faithfully separated in $\bC$ then $(FG)^+$ is faithfully
  separated in $\bC \cup_{\Theta_0} \Type$.
\end{proposition}
\begin{proof}
  As in \Cref{prop:add-tm-norm}, we construct a lift
  $\widetilde{F\Theta_0.\Type}$.
  However, because \Cref{lem:ev-tm-lift} is only for strict term-lifting and not
  type-lifting, we construct $\widetilde{F\Theta_0.\Type}$ by the universal
  property of $\vvbr{\Ctx \vdash \Type}$.
  Namely, $\widetilde{F\Theta_0.\Type}$ selects the context $F\Theta_0$ and the
  type
  $(F\Theta_0 \xrightarrow{=} F\Theta_0, \Type_{F\Theta_0}) \in
  \Ty_{\Norm((FG)^+)} F\Theta$.
  \begin{equation*}
    \begin{tikzcd}
      {\vvbr{\Ctx}} & \bC & {\Norm(G)} && {\Norm((FG)^+)} \\
      {\vvbr{\Ctx \vdash \Type}} &&&& {\bC \cup_{\Theta_0} \Term}
      \arrow["{\Theta_0}", from=1-1, to=1-2]
      \arrow[hook, from=1-1, to=2-1]
      \arrow["S", from=1-2, to=1-3]
      \arrow["{\Norm(F)}", from=1-3, to=1-5]
      \arrow["{\ev((FG)^+)}", from=1-5, to=2-5]
      \arrow["{\widetilde{F\Theta_0.\Type_{F\Theta_0}}}"{description}, dashed, from=2-1, to=1-5]
      \arrow["{F\Theta_0.\Type_{F\Theta_0}}"', from=2-1, to=2-5]
    \end{tikzcd}
  \end{equation*}
  whose existence gives rise to a section $S \cup_{\Theta_0} \Type$ as below.
  \begin{equation*}
    \begin{tikzcd}[column sep=large]
      {\vvbr{\Ctx}} & {\vvbr{\Ctx \vdash \Type}} \\
      \bC & {\bC \cup_{\Theta_0} \Type} \\
      & {\Norm(G)} & {\Norm((FG)^+)} \\
      && \bC & {\bC \cup_{\Theta_0} \Type}
      \arrow[hook, from=1-1, to=1-2]
      \arrow["{{\Theta_0}}"', from=1-1, to=2-1]
      \arrow["{{F\Theta_0.\Type_{F\Theta_0}}}"{description}, from=1-2, to=2-2]
      \arrow["{{\widetilde{F\Theta_0.\Type_{F\Theta_0}}}}"{description}, curve={height=-12pt}, dashed, from=1-2, to=3-3]
      \arrow["{{F\Theta_0.\Type_{F\Theta_0}}}", curve={height=-24pt}, from=1-2, to=4-4]
      \arrow["F"{description}, from=2-1, to=2-2]
      \arrow["S"{description}, from=2-1, to=3-2]
      \arrow["{{=}}"', curve={height=24pt}, from=2-1, to=4-3]
      \arrow["{{\ulcorner}}"{description, pos=0}, draw=none, from=2-2, to=1-1]
      \arrow["{{S \cup_{\Theta_0} \Type}}"{description}, dashed, from=2-2, to=3-3]
      \arrow["{{\Norm(F)}}"{description}, from=3-2, to=3-3]
      \arrow["{{\ev(G)}}"{description}, from=3-2, to=4-3]
      \arrow["{{\ev((FG)^+)}}"{description}, from=3-3, to=4-4]
      \arrow["F"', from=4-3, to=4-4]
    \end{tikzcd}
  \end{equation*}
  It remains to check that all types in
  $(FG)^+ = FG \sqcup \set{(F\Theta_0, \Type_{F\Theta_0})}$ are
  $(S \cup_{\Theta_0} \Type)$-normal.
  Because $\Norm(F) \cdot S = (S \cup \Type) \cdot F$, all types in $FG$ are
  $(S \cup_{\Theta_0} \Type)$-normal by \Cref{lem:norm-struct-extend}.
  The type $(F\Theta_0, \Type_{F\Theta_0})$ is
  $(S \cup_{\Theta_0} \Type)$-normal because
  $(S \cup \Type) \cdot (F\Theta.\Type_{F\Theta_0}) =
  \widetilde{F\Theta_0.\Type_{F\Theta_0}} \colon \vvbr{\Ctx \vdash \Type} \to
  \Norm((FG)^+)$, and $\widetilde{F\Theta_0.\Type_{F\Theta_0}}$ selects the type
  $(\id_{F\Theta_0}, \Type_{F\Theta_0})$.

  Finally, assuming that $G$ is faithfully separated in $\bC$, we must check
  that $(FG)^+$ is faithfully separated in $\bC \cup_{\Theta_0} \Type$.
  Following reasoning similar to \Cref{prop:add-tm-norm}, because
  $S \cup_{\Theta_0} \Type$ is a section of $\ev((FG)^+)$, its map on underlying
  categories is faithful, so by \Cref{lem:sep-reflect,lem:faith-reflect}, it
  suffices to check that it sends
  $(FG)^+ = FG \sqcup \set{(F\Theta_0, \Type_{F\Theta_0})}$ to a faithfully
  separated set of types in $\Norm((FG)^+)$.
  Using \Cref{lem:norm-faith-sep} like in \Cref{prop:add-tm-norm}, one sees that
  $FG$ is sent to a faithfully separated set of types in $\Norm((FG)^+)$.
  Moreover,
  $(S \cup_{\Theta_0} \Type)^{\Ty\relax}(F\Theta_0, \Type_{F\Theta_0}) =
  (\id_{F\Theta_0}, \Type_{F\Theta_0})$ by construction, which is faithful in
  $\Norm((FG)^+)$ by \Cref{lem:norm-faith-sep}.
  Finally, $(F\Theta_0, \Type_{F\Theta_0})$ is separated from $(F\Theta, T)$ in
  $\Norm((FG)^+)$ for each $(\Theta,T) \in G$ because by definition
  $\Type_{F\Theta_0}$ is a fresh type distinct from any previous type in $T$.
\end{proof}

\begin{corollary}\label{cor:cell-sep}
  Every $\McI_\bT^\CwA$-cell complex $0 \hookrightarrow \bC$ has a
  type-normalisation structure $S \colon \bC \to \Norm(\bC,G)$ with respect to a
  faithfully separating set of types
  $G \subseteq \coprod_{\Gamma \in \bC} \Ty_\bC\Gamma$.
\end{corollary}
\begin{proof}
  The $\CwA$ $0$ with just the empty context and no types has such a
  type-normalisation structure by taking $G = \emptyset$ and
  $S \colon 0 \to \Norm(0,0)$ to be the unique map.
  The result then follows by applying \Cref{prop:add-tm-norm,prop:add-ty-norm}
  on the filtration of $\bC$.
\end{proof}

\begin{corollary}\label{thm:cell-syntax}
  Suppose $0 \hookrightarrow \bC$ is an $\McI_\bT^\CwA$-cell complex.
  Then, there is a set
  $\set{(\Theta_i,T_i)}_{i \in I} \subseteq \coprod_{i \in I}\Ty_\bC\Theta_i$
  such that for all $\Gamma \in \bC$ and $X \in \Ty_\bC\Gamma$ where $X$ is not
  a $\Pi,\Sigma,\Id$-type then there is some $i \in I$ such that the
  $\Hom\relax$-set $\Hom_{\oint \Ty_\bC}((\Gamma,X), (\Theta_i,T_i))$ is a
  singleton and the $\Hom\relax$-set
  $\Hom_{\oint \Ty_\bC}((\Gamma,X), (\Theta_j,T_j))$ is empty for all
  $j \neq i$.
\end{corollary}
\begin{proof}
  By \Cref{cor:cell-sep}, there is a faithfully separating set of types
  $G \subseteq \coprod_{\Gamma \in \bC} \Ty_\bC \Gamma$ and a
  type-normalisation structure $S \colon \bC \to \Norm(\bC,G)$.
  Because $S$ is a section of $\ev(G)$ and $X$ is not a $\Pi,\Sigma,\Id$-type, \Cref{lem:sec-ev-char}
  says that
  \begin{align*}
    S^{\Ty\relax} X = (\Gamma \xrightarrow{S^\Sub X} S^\Ctx X, S^\Nf X)
  \end{align*}
  such that $(S^\Ctx X, S^\Nf X) \in G$ and $(S^\Sub X)^*(S^\Nf X) = X$.
  Hence, $S^\Sub X \in \Hom_{\oint \Ty_\bC}((\Gamma,X), (S^\Sub X, S^\Nf X)) \neq \emptyset$.
  And because $(S^\Ctx X, S^\Nf X) \in G$ is faithful, for all
  $f \in \Hom_{\oint \Ty_\bC}((\Gamma,X), (S^\Sub X, S^\Nf X))$ one has
  $f^*(S^\Nf X) = X = (S^\Sub X)^*(S^\Nf X)$ so $f = S^\Sub X$, and so
  $\Hom_{\oint \Ty_\bC}((\Gamma,X), (S^\Sub X, S^\Nf X)) = \set{S^\Sub X}$ is a
  singleton.
  Now, if $\Hom_{\oint \Ty_\bC}((\Gamma,X), (\Theta, T)) \neq \emptyset$ for any
  $(\Theta,T) \in G$ different from $(S^\Sub X, S^\Nf X)$ then by definition of
  the category of elements, there is some map $f \colon \Gamma \to \Theta$ such
  that $(S^\Sub X)^*(S^\Nf X) = X = f^*T$.
  But $G$ is separated, so this cannot be possible.
\end{proof}

\begin{theorem}\label{prop:cwa-cells-gen-base}
  Suppose $0 \hookrightarrow \bC$ is an $\McI^\CwA_\bT$-cell complex.
  Then there is a set
  $\set{(\Theta_i \in \bC, T_i \in \Ty_\bC\Theta_i)}_{i \in I}$ such that for
  every $\Gamma \in \bC$ and $X \in \Ty_\bC\Gamma$, exactly one of the following
  cases is true:
  \begin{enumerate}[ref={Case (\theenumi)}]
    \item \label{itm:gen-base-Pi} $X = \Pi(A,B)$ for some unique
    $A \in \Ty_\bC\Gamma$ and $B \in \Ty_\bC{\Gamma.B}$.
    \item \label{itm:gen-base-Sigma} $X = \Sigma(A,B)$ for some unique
    $A \in \Ty_\bC\Gamma$ and $B \in \Ty_\bC{\Gamma.A}$.
    \item \label{itm:gen-base-Id} $X = f^*\Id_A$ for some $\Delta \in \bC$ and
    type $A \in \Ty_\bC \Delta$ and map $f \colon \Gamma \to \Delta.A.A$.
    Moreover, the choice of $A$ is unique, in that if $X = g^*\Id_B$ for some
    other $B \in \Ty_\bC\Delta$ and $g \colon \Gamma \to \Delta.B.B$ then
    $A = B$.
    \item \label{itm:gen-base-base} There is some $i \in I$ such that in
    $\oint_\bC{\Ty_\bC\relax}$, the $\Hom\relax$-set
    $\Hom((\Gamma,T), (\Theta_i,T_i))$ is a singleton and the $\Hom\relax$-set
    $\Hom((\Gamma,T), (\Theta_j,T_j))$ is empty for all $j \neq i$.
  \end{enumerate}
  Moreover, for each $\Gamma \in \bC$, there is a function
  $\sz \colon \Ty_\bC\Gamma \to \bN$ such that:
  \begin{itemize}
    \item For all $\Gamma \in \bC$ and $A.B \in \Gamma$, one has that
    $\sz_\Gamma(A), \sz_{\Gamma.A}(B) < \sz_\Gamma(\Pi(A,B)), \sz_\Gamma(\Sigma(A,B))$
    \item For all $\Delta \in \bC$ and $A \in \Ty_\bC\Delta$ and map
    $f \colon \Gamma \to \Delta.A.A$, one has $\sz_\Delta(A) < \sz_\Gamma(f^*\Id_A)$.
    %
    %
  \end{itemize}
\end{theorem}
\begin{proof}
  By \Cref{thm:cell-syntax}, \Cref{itm:gen-base-base} holds when none of
  \Cref{itm:gen-base-Pi,itm:gen-base-Sigma,itm:gen-base-Id} holds.
  To shows that \Cref{itm:gen-base-Pi,itm:gen-base-Sigma,itm:gen-base-Id} are
  disjoint is to show that $\Pi,\Sigma,\Id$-types are separated in $\bC$.
  By \Cref{lem:sep-reflect}, it suffices to exhibit a map
  $F \colon \bC \to \bD \in \CwA_\bT$ where $\Pi,\Sigma,\Id$-types are separated in $\bD$.
  We take $\bD$ to be the syntactic $\CwA_\bT$ containing only one base type
  $\MsT$ and one distinct formal term for each type, so that types in each
  context is either some weakening of $\MsT$ or the $\Pi,\Sigma,\Id$-type
  constructors applied to some weakening of $\MsT$ and all types are inhabited.
  In particular, weakenings of $\MsT$ and $\Pi,\Sigma,\Id$-types are all
  mutually syntactically separated in $\bD$.
  Then, applying induction on the cellular filtration of $\bC$, one obtains such
  a map $\bC \to \bD$, and by construction, $\Pi,\Sigma,\Id$-types are separated
  in $\bD$.
  Likewise, we can define the size function on types by putting $\sz_\Gamma(X)$
  to be the syntactic size of the type $F^{\Ty\relax} X$ in the syntactic category
  $\bD$.

  It remains to show the uniqueness parts of
  \Cref{itm:gen-base-Pi,itm:gen-base-Sigma,itm:gen-base-Id}.
  The uniqueness parts follow by exhibiting a $\CwA_\bT$ map
  $G \colon \bC \to \bE$ such that the uniqueness parts of
  \Cref{itm:gen-base-Pi,itm:gen-base-Sigma,itm:gen-base-Id} hold in $\bE$ and
  that the action on types of $G$ is faithful (i.e. is pointwise injective).
  This is because having such a $G$ implies that if
  $\Pi(A,B) = \Pi(A',B') \in \Ty_\bC\Gamma$ then
  $\Pi(G^{\Ty\relax}_\Gamma(A), G^{\Ty\relax}_{\Gamma.A}(B)) = G^{\Ty\relax}_\Gamma(\Pi(A,B)) =
  G^{\Ty\relax}_\Gamma(\Pi(A',B')) = \Pi(G^{\Ty\relax}_\Gamma(A'), G^{\Ty\relax}_{\Gamma.A'}(B'))$, so
  $G^{\Ty\relax}_\Gamma(A) = G^{\Ty\relax}_\Gamma(A')$, which implies $A = A'$ and
  $G^{\Ty\relax}_{\Gamma.A}(B) = G^{\Ty\relax}_{\Gamma.A'}(B')$, which implies $B = B'$.
  A similar argument applies for the $\Sigma$-type case.
  For the $\Id$-type case, if $f_1^*\Id_{A_1} = f_2^*\Id_{A_2}$ for some
  $\Delta$ and types $A_i \colon \Gamma \to \Delta.A_i.A_i$ and maps
  $f_i \colon \Gamma \to \Delta.A_i.A_i$ then
  $G_\Gamma^{\Ty\relax}(f_1^*\Id_{A_1}) = G(f_1)^*\Id_{G^{\Ty\relax}_\Delta(A_1)} =
  G(f_2)^*\Id_{G^{\Ty\relax}_\Delta(A_1)} = G_\Gamma^{\Ty\relax}(f_2^*\Id_{A_2})$, so
  $G^{\Ty\relax}_\Delta(A_1) = G^{\Ty\relax}_\Delta(A_2)$, from which $A_1 = A_2$ follows.

  We take $\bE$ to be the syntactic $\CwA_\bT$ consisting of $\alpha$ formal
  base types, where $\alpha$ is the number of steps in the filtration of $\bC$
  along with one formal term for each formal base type, so that again all types
  in $\bE$ are inhabited and by the filtration of $\bC$ there is a map
  $G \colon \bC \to \bE$ mapping each $T_i$ a distinct choice of the formal $\alpha$ base types.
  In particular, $G^{\Ty\relax}_{\Theta_i}T_i \neq G^{\Ty\relax}_{\Theta_j}T_j$ whenever
  $i \neq j$.
  Because $\bE$ is syntactic, the uniqueness parts of
  \Cref{itm:gen-base-Pi,itm:gen-base-Sigma,itm:gen-base-Id} hold.
  It remains to check that the map $G$ is such that for all
  $X,X' \in \Ty_\bC\Gamma$, if $G^{\Ty\relax}_\Gamma(X) = G^{\Ty\relax}_\Gamma(X')$ then
  $X = X'$.
  We proceed by nested induction on $\sz_\Gamma(X)$ and $\sz_\Gamma(X')$.
  Because $\Pi,\Sigma,\Id$- and base-types are distinct (i.e. cases
  \Cref{itm:gen-base-Id,itm:gen-base-Pi,itm:gen-base-Sigma,itm:gen-base-base}
  are mutually exclusive) in both $\bC$ and $\bE$ and $G$ preserves the logical
  structure, one may assume both $X$ and $X'$ are either both $\Pi$-types, both
  $\Sigma$-types, both $\Id$-types or both base types.
  The base type case is immediate because by construction
  $G^{\Ty\relax}_{\Theta_i}T_i \neq G^{\Ty\relax}_{\Theta_j}T_j$ whenever $i \neq j$.
  We next consider the case of $\Pi$-types as the cases of $\Id,\Sigma$-types
  proceed similarly.
  Suppose $X = \Pi(A,B)$ and $X' = \Pi(A',B')$ for
  $A.B, A'.B' \in \vec{\Ty}_\bC\Gamma$.
  In this case,
  $\Pi(G^{\Ty\relax}_\Gamma(A), G^{\Ty\relax}_{\Gamma.A}(B)) = G^{\Ty\relax}_\Gamma(X) =
  G^{\Ty\relax}_\Gamma(X') = \Pi(G^{\Ty\relax}_\Gamma(A'), G^{\Ty\relax}_{\Gamma.A'}(B'))$ in $\bE$, so
  $G^{\Ty\relax}_\Gamma(A) = G^{\Ty\relax}_\Gamma(A')$ and
  $G^{\Ty\relax}_{\Gamma.A}(B) = G^{\Ty\relax}_{\Gamma.A'}(B')$.
  Because $\sz_\Gamma(A) < \sz_\Gamma(X)$ and
  $\sz_\Gamma(A') < \sz_\Gamma(X')$, induction gives $A = A'$ so that
  $\Gamma.A = \Gamma.A'$ and again induction gives $B = B'$.
\end{proof}

Moving back to the contextual world, we can now prove \Cref{prop:cofib-ty-class}.
\begin{proposition}\label{prop:cofib-ty-class}
  If $\bC \in \CxlCat_{\ITT+\UIP}$ is $\McI^\CxlCat_{\ITT+\UIP}$-cellular then
  there is a set $\set{(\Theta_i \in \bC, A_i \in \Ty_\bC\Theta_i)}_{i \in I}$
  such that each object $\Gamma \in \ob_{n+1}\bC$ for $n \in \bN$ is exactly one
  of the following mutually exclusive cases:
  \begin{itemize}
    \item \emph{$\Pi$-types.} $\Gamma = \Delta.\Pi(X,Y)$ for unique
    $\Delta \in \bC$ and $X.Y \in \Ty_\bC\Delta$.
    \item \emph{$\Sigma$-types.} $\Gamma = \Delta.\Sigma(X,Y)$ for unique
    $\Delta \in \bC$ and $X.Y \in \Ty_\bC\Delta$.
    \item \emph{$\Id$-types.} $\Gamma = \Delta.f^*\Id_X$ for some
    $\Delta,\Delta' \in \bC$ and $X \in \Ty_\bC\Delta'$ and
    $f \colon \Delta \to \Delta'.X.X$.
    Moreover, the choice of $X$ is unique, in that if $\Gamma = \Delta.g^*\Id_Y$
    for some $Y \in \Ty_\bC\Delta'$ and $g \colon \Delta \to \Delta'.Y.Y$ then
    $Y = X$.
    \item \emph{Base types.} $\Gamma = \Delta.f^*A_i$ for unique
    $\Delta \in \bC$ and $f \colon \Delta \to \Theta_i$ and moreover
    $\Gamma \neq \Delta.g^*A_i$ for any $g \colon \Delta \to \Theta_j$ where
    $j \neq i$.
  \end{itemize}
  Moreover, there is a well-founded order $<$ on $\ob\bC$ such that for all
  $\Gamma \in \bC$ and $X.Y \in \vec{\Ty_\bC\relax}_\bC\Gamma$,
  \begin{align*}
    \Gamma.X.Y < \Gamma.\Pi(X,Y), \Gamma.\Sigma(X,Y)
    &&
       \Gamma < \Gamma.X
  \end{align*}
\end{proposition}
\begin{proof}
  By \Cref{prop:kl-4.13.4}, $\McI^\CxlCat_{\ITT+\UIP}$-cell complex in
  $\CxlCat_{\ITT+\UIP}$ are also $\McI^\CwA_{\ITT+\UIP}$-cell complexes under
  the inclusion $\CxlCat_{\ITT+\UIP} \hookrightarrow \CwA_{\ITT+\UIP}$.
  The result that $\Pi$-,$\Sigma$,$\Id$-types are separated follows from
  \Cref{prop:cwa-cells-gen-base}.
  The ordering on contexts is provided by the functions
  $(\sz_\Gamma \colon \Ty_\bC\Gamma \to \bN)_{\Gamma \in \bC}$.
\end{proof}


\section{The Extensional Kernel}\label{sec:w-ett}
We now describe the wide subcategory $\McW_\ETT$ of $\McW$ in an
$\McI^\CxlCat_{\ITT+\UIP}$-cell complex $\bC$ which we intend to collapse to
identities.
The organisation of this section is as follows:
\begin{itemize}
  \item In \Cref{def:id-pi,lem:id-pi} , we show that if
  $w_\Gamma \colon \Gamma_1 \tosimeq \Gamma_2$ and
  $w_A \colon \Gamma_1.A_1 \tosimeq \Gamma_2.A_2$ and
  $w_{AB} \colon \Gamma_1.A_1.B_1 \tosimeq \Gamma_2.A_2.B_2$ then one has an
  equivalence
  $w_\Pi(w_{AB},w_A,w_\Gamma) \colon \Gamma_1.\Pi(A_1,B_1) \tosimeq
  \Gamma_2.\Pi(A_2,B_2)$ over $w_\Gamma \colon \Gamma_1 \tosimeq \Gamma_2$ that
  preserves the necessary logical constructors.
  \Cref{def:id-id,lem:id-id} is the corresponding construction for $\Id$-types
  while \Cref{def:id-sigma,lem:id-sigma} is the corresponding construction for
  $\Sigma$-types.
  These definitions and lemmas correspond to \cite[Lemma 3.2.7]{hofmann:extensional-constructs}.
  \item In \Cref{lem:id-pi-gamma}, we show that a map
  $\gamma \colon \Delta_1.\Pi(f_1^*A_1, f_1^*B_1) \tosimeq \Delta_2.\Pi(f_2^*A_2,
  f_2^*B_2)$ obtained from \Cref{def:glue} is homotopic to a map $w_\Pi$
  obtained from \Cref{def:id-pi}.
  In other words, $w_\Pi$ is a ``canonical'' way of writing equivalences between $\Pi$-types.
  \Cref{lem:id-id-gamma,lem:id-sigma-gamma} are corresponding versions of this
  result for $\Id$- and $\Sigma$-types.
  Because Hofmann constructed his maps $\textsf{co}$ by way of inspecting the
  outer-most type former, there is no corresponding result in
  \cite{hofmann:extensional-constructs}.
  \item We then define $\McW_\ETT$ via an inductive process in \Cref{def:w-ett}.
  As previously mentioned, the maps in $\McW_\ETT$ corresponds to the maps
  $\textsf{co}$ in \cite[\S 3.2.5.2]{hofmann:extensional-constructs}.
  \item \Cref{lem:w-ett-free-ty} shows that maps in $\McW_\ETT$ between two
  freely-added base types are canonically expressed (up to homotopy) as a $\gamma$-map as in \Cref{def:glue}.
  Likewise, \Cref{lem:w-ett-canon} shows that maps in $\McW_\ETT$ between
  two $\Pi$-, $\Id$- or $\Sigma$-types are canonically expressed (up to
  homotopy) as some $w_\Pi$, $w_\Id$ or $w_\Sigma$ respectively from
  \Cref{def:id-pi,def:id-id,def:id-sigma}.
  Essentially, \Cref{lem:w-ett-canon} is proved by inductively applying
  \Cref{lem:id-pi-gamma,lem:id-id-gamma,lem:id-sigma-gamma}
  with the help of the classification of types property given by cofibrancy in
  \Cref{prop:cofib-ty-class}.
  \item Making use of the canonical form of weak equivalences in $\McW_\ETT$
  given by \Cref{lem:w-ett-free-ty,lem:w-ett-canon}, we repeatedly use $\UIP$ to
  show \Cref{prop:w-ett-thin-subgpd} , which states that objects identified by
  $\McW_\ETT$ have exactly one homotopy class of identification: if
  $w,w' \colon \Theta_1 \tosimeq \Theta_2 \in \McW_\ETT$ then $w \simeq w'$.
  We also show in \Cref{lem:w-ett-grading-proj} that the identification respects
  the grading in a contextual category.
  These results correspond to \cite[Lemma
  3.2.8]{hofmann:extensional-constructs}.
  \item In \Cref{def:w-ett-maps}, we define a quotient relation on the objects
  and maps of $\bC$ that in essence defines the objects and maps in the category
  obtained by formally collapsing all maps in $\McW_\ETT$ to identities.
  This construction corresponds to \cite[Definition 3.2.12]{hofmann:extensional-constructs}.
  In \Cref{lem:approx-char}, we give a useful characterisation of the
  equivalence relation on maps from \Cref{def:w-ett-maps} that is crucially
  reliant on the fact that objects identified by $\McW_\ETT$ have exactly one
  homotopy class of identification as in \Cref{prop:w-ett-thin-subgpd}, which
  itself is a consequence of $\UIP$ and cofibrancy.
  \item In \Cref{lem:ext-ker}, we show that all maps from $\bC$ to a model of
  extensional type theory must send all maps in $\McW_\ETT$ to identities.
  This corresponds to \cite[Lemma 3.2.11]{hofmann:extensional-constructs}.
\end{itemize}

\begin{definition}\label{def:id-pi}
  Let $w_{AB} \colon \Gamma_1.A_1.B_1 \tosimeq \Gamma_2.A_2.B_2$ and
  $w_A \colon \Gamma_1.A_1 \tosimeq \Gamma_2.A_2$ and
  $w_\Gamma \colon \Gamma_1 \tosimeq \Gamma_2$ be such that
  such that
  \begin{equation*}
    \begin{tikzcd}[cramped]
      \Gamma_1.A_1.B_1 \ar{r}{w_{AB}} \ar[two heads]{d} & \Gamma_2.A_2.B_2 \ar[two heads]{d} \\
      \Gamma_1.A_1 \ar{r}{w_A} \ar[two heads]{d} & \Gamma_2.A_2 \ar[two heads]{d} \\
      \Gamma_1 \ar{r}{w_\Gamma} & \Gamma_2
    \end{tikzcd}
  \end{equation*}
  Suppose $w_\Gamma$ is
  $(\vec{x}_1:\Gamma_1) \vdash t_\Gamma(\vec{x}_1) : \Gamma_2$.
  Write $w_A$ as $(\vec{x}_1:\Gamma_1)(a_1:A_1(\vec{x}_1))
  \vdash t_\Gamma(\vec{x}_1), t_A(\vec{x}_1,a_1)
  : \Gamma_2.A_2$ so that $w_{AB}$ is
  \begin{align*}
    (\vec{x}_1:\Gamma_1)(a_1:A_1(\vec{x}_1))(b_1:B_1(a_1,\vec{x}_1))
    \vdash t_\Gamma(\vec{x}_1), t_A(\vec{x}_1, a_1), t_B(\vec{x}_1, a_1,b_1) :
    \Gamma_2.A_2.B_2
  \end{align*}
  Take left inverse
  $(\vec{x}_2:\Gamma_2)(a_2:A_2(\vec{x}_2)(b_2:B_2(\vec{x}_2,a_2))
  \vdash \ell_\Gamma(\vec{x}_2), \ell_A(\vec{x}_2,a_2), \ell_B(\vec{x}_2,a_2,b_2) :
  \Gamma_1.A_1.B_1$.
  Define $w_\Pi$ as
  \begin{align*}
    (\vec{x}_1:\Gamma_1)(f_1:\Pi(A_1,B_1)) \vdash
    \begin{pmatrix}
      t_\Gamma(\vec{x}_1) \\
      \lambda(a_2:A_2).t_B(\ell_A(a_2), \app(f_1, \ell_A(a_2)))
    \end{pmatrix}
    :
    \Gamma_2.\Pi(A_2,B_2)
  \end{align*}
\end{definition}

\begin{lemma}\label{lem:id-pi}
  The map
  $w_\Pi(w_{AB},w_A,w_\Gamma) \colon \Gamma_1.\Pi(A_1,B_1) \tosimeq \Gamma_2.\Pi(A_2,B_2)$
  from \Cref{def:id-pi} is an equivalence over
  $w_\Gamma \colon \Gamma_1 \tosimeq \Gamma_2$.

  Moreover, if one has $w_{AB}' \colon \Gamma_1.A_1.B_1 \tosimeq \Gamma_2.A_2.B_2$
  over $w_A' \colon \Gamma_1.A_1 \tosimeq \Gamma_2.A_2$, all over
  $w_\Gamma' \colon \Gamma_1 \tosimeq \Gamma_2$ such that $w_{AB} \simeq w_{AB}'$
  and $w_A \simeq w_A'$ and $w_\Gamma \simeq w_\Gamma'$ then
  $w_\Pi(w_{AB},w_A,w_\Gamma) \simeq w_\Pi(w_{AB}',w_A',w_\Gamma') \colon \Gamma_1.\Pi(A_1,B_1)
  \tosimeq \Gamma_2.\Pi(A_2,B_2)$.
\end{lemma}
\begin{proof}
  For ease of understanding, we omit the $(\vec{\gamma_1}:\Gamma_1)$ and
  $(\vec{\gamma_2}:\Gamma_2)$ arguments.
  The left homotopy inverse $\overline{w_\Pi}^\ell$ is given by
  \begin{align*}
    (f_2:\Pi(a_2:A_2).B_2(a_2)) \vdash
    \lambda(a_1:A_1).\ell_B(t_A(a_1), \app(f_2, t_A(a_1))) : \Pi(A_1,B_1)
  \end{align*}
  and the right homotopy inverse $\overline{w_\Pi}^r$ is given by
  \begin{align*}
    (f_2:\Pi(a_2:A_2).B_2(a_2)) \vdash
    \lambda(a_1:A_1).r_B(t_A(a_1), \app(f_2, t_A(a_1))) : \Pi(a_1:A_1).B_1(a_1)
  \end{align*}
  where
  \begin{align*}
    (\vec{x}_2:\Gamma_2)(a_2:A_2(\vec{x}_2))(b_2:B_2(\vec{x}_2,a_2))
    \vdash
    r_\Gamma(\vec{x}_2),
    r_A(\vec{\gamma_2},a_2), r_B(\vec{\gamma_2},a_2,b_2) :
    (\vec{x}_1:\Gamma_1)(a_1:A_1(\vec{x}_1))(b_1:B_1(\vec{x}_1,a_1))
  \end{align*}
  is a right inverse of $w_{WB}$.

  Then, noting that each
  $t_A(\ell_A(a_2)) \simeq t_A(\ell_A(t_A(r_A(a_2)))) \simeq
  t_A(r_A(a_2)) \simeq a_2$ and making use of functional
  extensionality, we have that
  $(w_\Pi \cdot \overline{w_\Pi}^r)(f_2)$, for each
  $f_2:\Pi(a_2:A_2).B_2(a_2)$, is
  \begin{align*}
    & \lambda(a_2:A_2).t_B(\ell_A(a_2), r_B(\underbrace{t_A(\ell_A(a_2))}_{{} \simeq a_2},
      \app(f_2, \underbrace{t_A(\ell_A(a_2))}_{{} \simeq a_2}))) \\
    {}\simeq{} & \lambda(a_2:A_2).t_B(\ell_A(\underbrace{a_2}_{{} \simeq t_A(r_A(a_2))}),
                 r_B(a_2, \app(f_2,a_2))) \\
    {}\simeq{} & \lambda(a_2:A_2).
                 \underbrace{t_B(r_A(a_2),r_B(a_2, \app(f_2,a_2)))}_{{} \simeq \app(f_2,a_2)} \\
    {}\simeq{} & f_2
  \end{align*}
  Likewise, for the left inverse, for each
  $f_1:\Pi(a_1:A_1).B_1(a_1)$, we have
  $(\overline{w_\Pi}^\ell \cdot w_\Pi)(f_1)$ is
  \begin{align*}
    & \lambda(a_1:A_1).\ell_B(t_A(a_1), t_B(\underbrace{\ell_A(t_A(a_1))}_{{} \simeq a_1},
      \app(f_1, \underbrace{\ell_A(t_A(a_1))}_{{} \simeq a_1}))) \\
    {}\simeq{} & \lambda(a_1:A_1).
                 \underbrace{\ell_B(t_A(a_1),t_B(a_1, \app(f_1,a_1)))}_{{} \simeq \app(f_1,a_1)} \\
    {}\simeq{} & f_1
  \end{align*}

  Now, for another pair of equivalences
  $w_{AB}' \colon \Gamma_1.A_1.B_1 \tosimeq \Gamma_2.A_2.B_2$ over
  $w_A' \colon \Gamma_1.A_1 \tosimeq \Gamma_2.A_2$ both over $w_\Gamma' \colon \Gamma_1 \tosimeq \Gamma_2$,
  write $w_A'$ as $(a_1:A_1) \vdash t_A'(a_1) : A_2$ so that
  $w_{AB}'$ is
  \begin{align*}
    (a_1:A_1)(b_1:B_1(a_1)) \vdash t_A'(a_1), t_B'(a_1,b_2) : (a_2:A_2)(b_2:B_2(a_2))
  \end{align*}
  with left and right inverses respectively
  \begin{align*}
    (a_2:A_2)(b_2:B_2(a_2)) &\vdash
                              \ell_A'(a_2), \ell_B'(a_2,b_2) : (a_1:A_1)(b_1:B_1(a_1))
    \\
    (a_2:A_2)(b_2:B_2(a_2)) &\vdash
                              r_A'(a_2), r_B'(a_2,b_2) : (a_1:A_1)(b_1:B_1(a_1))
  \end{align*}
  If $w_A \simeq w_A'$ and $w_{AB} \simeq w_{AB}'$ then one has terms
  \begin{align*}
    (a_1:A_1)(b_1:B_1(a_1)) &\vdash H_1(a_1,b_1)
                              :
                              \Id((t_A(a_1), t_B(a_1,b_1)),
                              (t_A'(a_1), t_B'(a_1,b_1))) \\
    (a_2:A_2)(b_2:B_2(a_2)) &\vdash H_2(a_2,b_2)
                              :
                              \Id((\ell_A(a_2), \ell_B(a_2,b_2)),
                              (\ell_A'(a_2), \ell_B'(a_2,b_2)))
  \end{align*}
  So by repeated $\MsJ$-elimination on $H_1$ and $H_2$ along with functional
  extensionality, it follows that there is a term witnessing
  $w_\Pi(w_{AB},w_A,w_\Gamma) \simeq w_\Pi(w_{AB}',w_A',w_\Gamma')$.
\end{proof}

\begin{lemma}\label{lem:id-pi-gamma}
  For
  $\gamma = \gamma(w_\Pi(w_{AB},w_A,w_\Gamma), w_\Gamma, w_\Delta;
  f_1,f_2,\refl) \colon \Delta_1.\Pi(f_1^*A_1,f_1^*B_1) \tosimeq
  \Delta_2.\Pi(f_1^*A_2,f_2^*B_2)$ as in \Cref{def:glue}
  \begin{equation*}
    \begin{tikzcd}[cramped, sep=small]
      &
      \Delta_2.\Pi(f_2^*A_2,f_2^*B_2) \ar{rr} \ar[two heads]{dd}
      \ar[phantom]{rrdd}[pos=0em]{\lrcorner}
      & & \Gamma_2.\Pi(A_2,B_2) \\
      \Delta_1.\Pi(f_1^*A_1,f_1^*B_1)
      \ar[ur, "\gamma", "\simeq"']
      \ar[crossing over]{rr}
      \ar[phantom]{rrdd}[pos=0em]{\lrcorner}
      \ar[two heads]{dd}
      & & \Gamma_1.\Pi(A_1,B_1) \ar[ur, "\simeq"', "w_{\Gamma.\Pi}"]  \\
      & \Delta_2 \ar{rr}[near start]{f_2}
      & & \Gamma_2 \ar[crossing over, twoheadleftarrow]{uu} \\
      \Delta_1 \ar{rr}[swap]{f_1}
      \ar[ur, "w_\Delta", "\simeq"']
      & & \Gamma_1 \ar[crossing over, twoheadleftarrow]{uu}
      \ar[ur, "\simeq"', "w_\Gamma"]
    \end{tikzcd}
  \end{equation*}
  one has $\gamma \simeq w_\Pi(w_{AB}',w_A',w_\Gamma)$ for
  $w_A' \colon \Delta_1.f_1^*A_1 \tosimeq \Delta_2.f_2^*A_2$ as
  $w_A' \coloneqq \gamma(w_A,w_\Gamma,w_\Delta;f_1,f_2,\refl)$ and
  \begin{align*}
    w_{AB}' \coloneqq \gamma(w_{AB},w_A,w_A';f_1.A_1,f_2.A_2,\refl) \colon
    \Delta_1.f_1^*A_1.f_1^*B_1 \tosimeq \Delta_2.f_2^*A_2.f_2^*B_2
  \end{align*}
\end{lemma}
\begin{proof}
  This follows from the fact that $w_\Pi$ is stable under substitution as
  $\lambda$-abstractions are stable under substitution.
  Write $w_{AB}$ as
  \begin{align*}
    (\vec{x}_1:\Delta_1)(a_1:A_1(\vec{x}_1))(b_1:B_1(a_1,\vec{x}_1))
    \vdash w_\Gamma(\vec{x}_1), w_A(\vec{x}_1, a_1), w_{AB}(\vec{x}_1, a_1,b_1) :
    \Gamma_2.A_2.B_2
  \end{align*}
  with left inverse $\ell \colon \Gamma_2.A_2 \tosimeq \Gamma_1.A_1$ of $w_A$ given by
  $(\vec{x}_2:\Gamma_2)(a_2:A_2(\vec{x}_2))
  \vdash \ell_\Gamma(\vec{x}_2), \ell_A(\vec{x}_2,a_2) :
  \Gamma_1.A_1$
  so that $w_{\Gamma.\Pi}$ is, by \Cref{def:id-pi}, is given as
  \begin{align*}
    (\vec{x}_1:\Gamma_1)(k_1:\Pi(a_1:A_1(\vec{x}_1)).B_1(\vec{x}_1,a_1))
    \vdash
    \begin{pmatrix}
      w_\Gamma(\vec{x}_1) \\
      \lambda(a_2:A_2).w_{AB}(\ell_A(a_2), \app(k_1, \ell_A(a_2)))
    \end{pmatrix}
    :
    \Gamma_2.\Pi(A_2,B_2)
  \end{align*}
  And therefore,
  $\gamma \colon \Delta_1.\Pi(f_1^*A_1,f_1^*B_1) \tosimeq
  \Gamma_2.\Pi(f_2^*A_2,f_2^*B_2)$ is given by
  \begin{align*}
    (\vec{y}_1:\Delta_1)(h_1:\Pi(a_1:A_1(f_1(\vec{y}_1))).B_1(f_1(\vec{y}_1),a_1))
    \vdash
    \begin{pmatrix}
      w_\Delta(\vec{y}_1) \\
      w_{\Gamma.\Pi}^*(f_1\vec{y}_1,h_1)
    \end{pmatrix}
    :
    \Delta_2.\Pi(f_2^*A_2,f_2^*B_2)
  \end{align*}
  where $w_{\Gamma.\Pi}^*(f_1\vec{y}_1,h_1)$ is given by
  \begin{align*}
    \lambda(a_2:A_2(f_1\vec{y}_1)).
    w_{AB}(w_\Delta(\vec{y}_1),
    \ell_A(w_\Gamma(f_1\vec{y}_1), a_2),
    \app(h_1, \ell_A(w_\Gamma(f_1\vec{y}_1), a_2)))
  \end{align*}

  On the other hand, note that $w_{AB}' = \gamma(w_{AB},w_A,w_A';f_1.A_1,f_2.A_2)$ is
  \begin{align*}
    (\vec{y}_1:\Delta_1)(a_1:f_1^*A_1)(b_1:f_1^*B_1(a_1))
    \vdash
    w_\Delta(\vec{y}_1), w_A(w_\Delta(\vec{y}_1), a_1), w_{AB}(w_\Delta(\vec{y}_1),a_1,b_1)
    :
    \Delta_2.f_2^*A_2.f_2^*B_2
  \end{align*}
  furthermore, for $\ell_\Delta \colon \Delta_2 \to \Delta_1$ a left inverse of
  $w_\Delta \colon \Delta_1 \tosimeq \Delta_2$, one has that
  $\ell_A' \coloneqq \gamma(\ell_A,\ell_\Gamma,\ell_\Delta; f_2,f_1) \colon
  \Delta_2.f_2^*A_2 \to \Delta_1.f_1^*A_1$ is a left inverse of
  $w_A' = \gamma(w_A,w_\Gamma,w_\Delta;f_1,f_2)$ and is given by
  \begin{align*}
    (\vec{y}_2:\Delta_2)(a_2:A_2(f_2\vec{y}_2))
    \vdash
    \ell_\Delta(\vec{y}_2),
    \ell_A(f_2\vec{y}_2, a_2)
    :
    \Delta_1.f_1^*A_1
  \end{align*}
  Hence, $w_\Pi(w_{AB}',w_A',w_\Gamma)$ is given by
  \begin{align*}
    \lambda(a_2:A_2(f_1\vec{y}_1)).
    w_{AB}(w_\Delta(\vec{y}_1),
    \ell_A(f_2w_\Delta(\vec{y}_1), a_2),
    \app(h_1, \ell_A(f_2w_\Delta(\vec{y}_1), a_2)))
  \end{align*}
  But $f_2w_\Delta = w_\Gamma f_1$.
  And for any other choice of $\ell_A''$ left inverse of $w_A'$, one has
  $\ell_A'' \simeq \ell_A'$ constructed as above, so
  $\gamma \simeq w_\Pi(w_{AB}',w_A',w_\Gamma)$, as claimed.
\end{proof}

\begin{definition}\label{def:id-id}
  Suppose $w_A \colon \Gamma_1.A_1 \tosimeq \Gamma_1.A_2$ over
  $w_\Gamma \colon \Gamma_1 \tosimeq \Gamma_2$.
  Suppose $w_\Gamma$ is
  $(\vec{x}_1:\Gamma_1) \vdash t_\Gamma(\vec{x}_1):\Gamma_2$.
  Write $w_A$ as
  $(\vec{x}_1:\Gamma_1)(a_1:A_1(\vec{x}_1)) \vdash t_\Gamma(\vec{x}_1), t_A(\vec{x}_1, a_1) :
  \Gamma_2.A_2$.
  Define $w_{\Id}(w_A,w_\Gamma) \colon \Gamma_1.A_1.A_1.\Id_{A_1} \to
  \Gamma_2.A_2.A_2.\Id_{A_2}$ as the map given by
  \begin{align*}
    (\vec{x}_1:\Gamma_1)(a_1~a_1':A_1(\vec{x}_1))(p_1 : \Id_{A_1(\vec{x}_1)}(a_1,a_1'))
    \vdash
    t_\Gamma(\vec{x}_1),
    t_A(\vec{x}_1,a_1),
    t_A(\vec{x}_1,a_1'),
    \transport(p_1)
    : \Gamma_2.A_2.A_2.\Id_{A_2}
  \end{align*}
  where $\transport$ is the standard transport map.
\end{definition}

\begin{lemma}\label{lem:id-id}
  The map
  $w_{\Id}(w_A,w_\Gamma) \colon \Gamma_1.A_1.A_1.\Id_{A_1} \tosimeq
  \Gamma_1.A_2.A_2.\Id_{A_2}$ from \Cref{def:id-id} is an equivalence over
  $w_\Gamma \colon \Gamma_1 \tosimeq \Gamma_2$ mapping constructors to
  constructors:
  \begin{equation*}
    \begin{tikzcd}[cramped]
      \Gamma_1.A_1 \ar{r}{\refl_1} \ar{d}{\simeq}[swap]{w_A}
      & \Gamma_1.A_1.A_1.\Id_{A_1} \ar{d}{w_\Id}[swap]{\simeq} \ar[two heads]{r}
      & \Gamma_1.A_1.A_1 \ar{d}{\gamma}[swap]{\simeq}
      \\
      \Gamma_2.A_2 \ar{r}[swap]{\refl_2} & \Gamma_2.A_2.A_2.\Id_{A_2} \ar[two heads]{r}
      & \Gamma_2.A_2.A_2
    \end{tikzcd}
  \end{equation*}
  where $\gamma = \gamma(w_A,w_\Gamma,w_A; \pi,\pi)$ is as from \Cref{def:glue}.

  Moreover, if there is another $w_A' \colon \Gamma_1.A_1 \tosimeq \Gamma_2.A_2$
  over $w_\Gamma' \colon \Gamma_1 \tosimeq \Gamma_2$ such that
  $w_\Gamma \simeq w_\Gamma'$ and $w_A \simeq w_A'$ then
  $w_\Id(w_A,w_\Gamma) \simeq w_\Id(w_A',w_\Gamma')$.
\end{lemma}
\begin{proof}
  The map $w_{\Id}$ is an equivalence by $\UIP$ and weak functoriality of
  transport obtained via $\MsJ$-elimination.
  Since transport computes at reflexivity, we have
  $w_{\Id} \cdot \refl_1 = \transport(\refl) = \refl_2 \cdot w_A$.
  This proves the left square commutes.
  Furthermore, note that $\gamma(w_A,w_\Gamma,w_A; \pi,\pi)$ is also
  $(\vec{x}_1:\Gamma_1)(a_1~a_1':A_1(\vec{x})_1) \vdash
  t_\Gamma(\vec{x}_1), t_A(\vec{x}_1,a_1), t_A(\vec{x}_1,a_1') : \Gamma_2.A_2.A_2$.
  This proves the right square commutes.

  Now, for another
  $w_A' \colon \Gamma_1.A_1 \tosimeq \Gamma_2.A_2$  given by
  $(\vec{x}_1:\Gamma_1)(a_1:A_1(\vec{x}_1)) \vdash t_\Gamma'(\vec{x}_1),
  t_A(\vec{x}_1,a_1) \vdash \Gamma_2.A_2$,
  if $w_A' \simeq w_A$ then there is
  \begin{align*}
    (\vec{x}_1:\Gamma_1)(a_1:A_1(\vec{x}_1)) \vdash H(\vec{x}_1,a_1) :
    \Id_{\Gamma_2.A_2}((w_\Gamma(\vec{x}_1), w_A(\vec{x}_1,a_1)),
    (w_\Gamma'(\vec{x}_1), w_A'(\vec{x}_1,a_1)))
  \end{align*}
  Applying $\MsJ$-elimination on $H$ gives a term of type
  \begin{align*}
    (\vec{x}_1:\Gamma_1)(a_1~a_1':A_1(\vec{x}_1))(p_1:\Id_{A_1(\vec{x}_1)}(a_1,a_1')) \vdash
    \Id_{\Gamma_2.A_2.A_2.\Id_{A_2}}(
    w_\Id(w_A,w_\Gamma),
    w_\Id(w_A',w_\Gamma')
    )
  \end{align*}
  which means $w_\Id(w_A,w_\Gamma) \simeq w_\Id(w_A',w_\Gamma')$.
\end{proof}

\begin{lemma}\label{lem:id-id-gamma}
  Fix $w_\Gamma \colon \Gamma_1 \tosimeq \Gamma_2$ and
  $w_A \colon \Gamma_1.A_1 \tosimeq \Gamma_2.A_2$ and
  $w_\Id(w_A,w_\Gamma) \colon \Gamma_1.A_1.A_1.\Id_{A_1} \tosimeq
  \Gamma_2.A_2.A_2.\Id_{A_2}$ as in \Cref{def:id-id}.
  For
  $\gamma = \gamma(w_\Id(w_A,w_\Gamma), w_A^*, w_\Delta; f_1,f_2,\refl) \colon
  \Delta_1.f_1^*A_1,f_1^*A_1.\Id_{f_1^*A_1} \tosimeq
  \Delta_2.f_2^*A_2,f_2^*A_2.\Id_{f_2^*A_2}$ as in \Cref{def:glue} where
  $w_A^* \colon \Gamma_1.A_1.A_1 \tosimeq \Gamma_2.A_2.A_2$ is given by
  $\gamma(w_A,\id,w_A;\pi,\pi,\refl)$
  \begin{equation*}
    \begin{tikzcd}[cramped, sep=small]
      &
      \Delta_2.f_2^*A_2.f_2^*A_2.\Id_{f_2^*A_2} \ar{rr} \ar[two heads]{dd}
      \ar[phantom]{rrdd}[pos=0em]{\lrcorner}
      & & \Gamma_2.A_2.A_2.\Id_{A_2} \\
      \Delta_1.f_1^*A_1.f_1^*A_1.\Id_{f_1^*A_1}
      \ar[ur, "\gamma", "\simeq"']
      \ar[crossing over]{rr}
      \ar[phantom]{rrdd}[pos=0em]{\lrcorner}
      \ar[two heads]{dd}
      & & \Gamma_1.A_1.A_1.\Id_{A_1} \ar[ur, "\simeq"', "w_{\Id}"]  \\
      & \Delta_2 \ar{rr}[near start]{f_2}
      & & \Gamma_2.A_2.A_2 \ar[crossing over, twoheadleftarrow]{uu} \\
      \Delta_1 \ar{rr}[swap]{f_1}
      \ar[ur, "w_\Delta", "\simeq"']
      & & \Gamma_1.A_1.A_1 \ar[crossing over, twoheadleftarrow]{uu}
      \ar[ur, "\simeq"', "w_A^*"]
    \end{tikzcd}
  \end{equation*}
  one has $\gamma = w_\Id(w_A',w_\Delta)$ for
  $w_A' \colon \Delta_1.f_1^*A_1 \tosimeq \Delta_2.f_2^*A_2$ given by
  $w_A' \coloneqq \gamma(w_A,w_\Gamma,w_\Delta;f_1,f_2,\refl)$.
\end{lemma}
\begin{proof}
  This follows from the fact that $w_\Id$ is stable under substitution.
  By definition, $w_\Id(w_A,w_\Gamma)$ is given as
  \begin{align*}
    (\vec{x}_1:\Gamma_1)(a_1~a_1':A_1(\vec{x}_1))(p:\Id_{A_1(\vec{x}_1)})
    \vdash
    w_\Gamma(\vec{x}_1), w_A(\vec{x}_1,a_1), w_A(\vec{x}_1,a_1'), \transport(p)
    :
    \Gamma_2.A_2.A_2.\Id_{A_2}
  \end{align*}
  and $w_A^*$ is given as
  \begin{align*}
    (\vec{x}_1:\Gamma_1)(a_1~a_1':A_1(\vec{x}_1))
    \vdash
    w_\Gamma(\vec{x}_1), w_A(\vec{x}_1,a_1), w_A(\vec{x}_1,a_1')
    :
    \Gamma_2.A_2.A_2
  \end{align*}
  Moreover, $w_A'$ is
  \begin{align*}
    (\vec{y}_1:\Delta_1)(a_1~a_1':A_1(f_1\vec{y}_1))
    \vdash
    w_\Delta(\vec{y}_1), w_A(f_1\vec{y}_1,a_1), w_A(f_1\vec{y}_1,a_1')
    :
    \Delta_2.f_2^*A_2.f_2^*A_2
  \end{align*}
  Therefore, $w_\Id(w_A',w_\Delta)$ is
  \begin{align*}
    (\vec{y}_1:\Delta_1)(a_1~a_1':A_1(f_1\vec{y}_1))(p:\Id_{A_1(f_1\vec{y}_1)})
    \vdash
    w_\Delta(\vec{y}_1), w_A(f_1\vec{y}_1,a_1), w_A(f_1\vec{y}_1,a_1'), \transport(p)
    :
    \Delta_2.f_2^*A_2.f_2^*A_2.\Id_{f_2^*A_2}
  \end{align*}
  which is exactly $\gamma(w_\Id(w_A,w_\Gamma), w_A^*, w_\Delta;f_1,f_2)$.
\end{proof}

\begin{definition}\label{def:id-sigma}
  Let $w_{AB} \colon \Gamma_1.A_1.B_1 \tosimeq \Gamma_2.A_2.B_2$ and
  $w_A \colon \Gamma_1.A_1 \tosimeq \Gamma_2.A_2$ and
  $w_\Gamma \colon \Gamma_1 \tosimeq \Gamma_2$ be such that
  such that
  \begin{equation*}
    \begin{tikzcd}[cramped]
      \Gamma_1.A_1.B_1 \ar{r}{w_{AB}} \ar[two heads]{d} & \Gamma_2.A_2.B_2 \ar[two heads]{d} \\
      \Gamma_1.A_1 \ar{r}{w_A} \ar[two heads]{d} & \Gamma_2.A_2 \ar[two heads]{d} \\
      \Gamma_1 \ar{r}{w_\Gamma} & \Gamma_2
    \end{tikzcd}
  \end{equation*}
  Suppose $w_\Gamma$ is
  $(\vec{x}_1:\Gamma_1) \vdash t_\Gamma(\vec{x}_1) : \Gamma_2$.
  Write $w_A$ as $(\vec{x}_1:\Gamma_1)(a_1:A_1(\vec{x}_1))
  \vdash t_\Gamma(\vec{x}_1), t_A(\vec{x}_1,a_1)
  : \Gamma_2.A_2$ so that $w_{AB}$ is
  \begin{align*}
    (\vec{x}_1:\Gamma_1)(a_1:A_1(\vec{x}_1))(b_1:B_1(a_1,\vec{x}_1))
    \vdash t_\Gamma(\vec{x}_1), t_A(\vec{x}_1, a_1), t_B(\vec{x}_1, a_1,b_1) :
    \Gamma_2.A_2.B_2
  \end{align*}
  Putting $d(\vec{x}_1, a_1, b_1) \coloneqq \pair_2(t_A(\vec{x}_1, a_1),
  t_B(\vec{x}_1,a_1,b_1))$
  gives
  \begin{align*}
    (\vec{x}_1:\Gamma_1).\Sigma(a_1:A_1(\vec{x}_1)).B_1(\vec{x}_1,a_1)
    \vdash
    t_\Gamma(\vec{x}_1), \sigmarec_{d(\vec{x}_1)}
    :
    \Gamma_2.\Sigma(A_2,B_2)
  \end{align*}
  which we take as $w_\Sigma(w_{AB},w_A,w_\Gamma)$.
\end{definition}

\begin{lemma}\label{lem:id-sigma}
  The map
  $w_\Sigma(w_{AB},w_A,w_\Gamma) \colon \Gamma_1.\Sigma(A_1.B_1) \tosimeq
  \Gamma_2.\Sigma(A_2,B_2)$ from \Cref{def:id-sigma} is over $\Gamma$ mapping
  constructors to constructors:
  \begin{equation*}
    \begin{tikzcd}[cramped]
      \Gamma_1.A_1.B_1 \ar{r}{\pair_1} \ar{d}{\simeq}[swap]{w_{AB}}
      & \Gamma_2.\Sigma(A_1,B_1) \ar{d}{w_\Sigma}[swap]{\simeq} \\
      \Gamma_2.A_2.B_2 \ar{r}[swap]{\pair_2} & \Gamma_2.\Sigma(A_2,B_2)
    \end{tikzcd}
  \end{equation*}

  Moreover, if there are
  $w_{AB}' \colon \Gamma_1.A_1.B_1 \tosimeq \Gamma_1.A_2.B_2$ over
  $w_A' \colon \Gamma_2.A_1 \tosimeq \Gamma_2.A_2.B_2$ over
  $w_\Gamma' \colon \Gamma_1 \tosimeq \Gamma_2$ such that
  $w_{AB} \simeq w_{AB}'$ and
  $w_A \simeq w_A'$ and
  $w_\Gamma \simeq w_\Gamma'$
  then
  $w_\Sigma(w_{AB}) \simeq w_\Sigma(w_{AB}')$.
\end{lemma}
\begin{proof}
  Taking the left homotopy inverse of $w_{AB}$ as
  \begin{align*}
    (\vec{x}_2:\Gamma_2)(a_2:A_2(\vec{x}_2))(b_2:B_2(a_2)) \vdash
    \ell_\Gamma(\vec{x}_2), \ell_A(\vec{x}_2,a_2), \ell_B(\vec{x}_2,a_2,b_2) :
    \Gamma_2.A_2.B_2
  \end{align*}
  we have a left inverse
  $\overline{w_\Sigma}^\ell$ given by
  \begin{align*}
    (\vec{x}_2:\Gamma_2).\Sigma(a_2:A_2(\vec{x}_2)).B_2(\vec{x}_2,a_2)
    \vdash
    \ell_\Gamma(\vec{x}_2), \sigmarec_{e(\vec{x}_2)}
    :
    \Gamma_2.A_2.B_2
  \end{align*}
  where
  $e(\vec{x}_2, a_2,b_2) \coloneqq \pair_1(\ell_A(\vec{x}_2,a_2),
  \ell_B(\vec{x}_2,a_2,b_2))$ because
  $(\ell_\Gamma \cdot t_\Gamma)(\vec{x}_2) \simeq \vec{x}_2$ and
  \begin{align*}
    (\sigmarec_e \cdot \sigmarec_d)(\pair_1(a_1,b_1))
    &= \sigmarec_e(\pair_2(t_A(a_1), t_B(a_1)) \\
    &= \pair_1(\ell_A(t_A(a_1), t_B(a_1,b_1)), \ell_B(t_A(a_1),
      t_B(a_1))) \\
    (\sigmarec_e \cdot \sigmarec_d)(\pair_1(a_1,b_1))
    &\simeq \pair_1(a_1,b_1)
  \end{align*}
  This completes the verification of the left inverse property thanks
  to $\Sigma$-induction.
  An identical construction verifies the right inverse property.
  Furthermore, observe that by the computation rule for the $\Sigma$-recursor,
  $w_\Sigma \cdot \pair_1 = \sigmarec_d \cdot \pair_1 = d = \pair_2 \cdot
  w_{AB}$.

  If there is another $w_{AB}' \colon \Gamma_1.A_1.B_1 \tosimeq \Gamma_1.A_2.B_2$
  over $w_A' \colon \Gamma_1.A_1 \tosimeq \Gamma_2.A_2$ over
  $w_\Gamma' \colon \Gamma_1 \tosimeq \Gamma_2$
  given as
  \begin{align*}
    (\vec{x}_1:\Gamma_1)(a_1:A_1(\vec{x}_1))(b_1:B_1(\vec{x}_1,a_1))
    \vdash t_\Gamma'(\vec{x}_1), t_A'(\vec{x}_1,a_1,b_1), t_B'(\vec{x}_1,a_1,b_1)
    :
    \Gamma_2.A_2.B_2
  \end{align*}
  such that there is
  \begin{align*}
    (\vec{x}_1:\Gamma_1)(a_1:A_1)(b_1:B_1(a_1)) \vdash H \colon
    \Id((t_\Gamma(\vec{x}_1), t_A(a_1), t_B(a_1,b_1))
    (t_\Gamma'(\vec{x}_1), t_A'(a_1), t_B'(a_1,b_1)))
  \end{align*}
  Then, one has
  \begin{align*}
    (t_\Gamma(\vec{x}_1), d(\vec{x}_1,a_1,b_1))
    &=
      (t_\Gamma(\vec{x}_1), \pair_2(t_A(\vec{x}_1,a_1),t_B(\vec{x}_1,a_1,b_1))) \\
    &\simeq
      (t_\Gamma'(\vec{x}_1), \pair_2(t_A'(\vec{x}_1,a_1),t_B'(\vec{x}_1,a_1,b_1))) \\
    &= (t_\Gamma'(\vec{x}_1),d'(a_1,b_1))
  \end{align*}
  From this, it follows that
  $w_\Sigma(w_{AB},w_A,w_\Gamma) \simeq w_\Sigma(w_{AB}',w_A',w_\Gamma')$.
\end{proof}

\begin{lemma}\label{lem:id-sigma-gamma}
  For
  $\gamma = \gamma(w_\Sigma(w_{AB},w_A,w_\Gamma), w_\Gamma, w_\Delta;
  f_1,f_2,\refl) \colon \Delta_1.\Sigma(f_1^*A_1,f_1^*B_1) \tosimeq
  \Delta_2.\Sigma(f_1^*A_2,f_2^*B_2)$ as in \Cref{def:glue}
  \begin{equation*}
    \begin{tikzcd}[cramped, sep=small]
      &
      \Delta_2.\Sigma(f_2^*A_2,f_2^*B_2) \ar{rr} \ar[two heads]{dd}
      \ar[phantom]{rrdd}[pos=0em]{\lrcorner}
      & & \Gamma_2.\Sigma(A_2,B_2) \\
      \Delta_1.\Sigma(f_1^*A_1,f_1^*B_1)
      \ar[ur, "\gamma", "\simeq"']
      \ar[crossing over]{rr}
      \ar[phantom]{rrdd}[pos=0em]{\lrcorner}
      \ar[two heads]{dd}
      & & \Gamma_1.\Sigma(A_1,B_1) \ar[ur, "\simeq"', "w_{\Gamma.\Sigma}"]  \\
      & \Delta_2 \ar{rr}[near start]{f_2}
      & & \Gamma_2 \ar[crossing over, twoheadleftarrow]{uu} \\
      \Delta_1 \ar{rr}[swap]{f_1}
      \ar[ur, "w_\Delta", "\simeq"']
      & & \Gamma_1 \ar[crossing over, twoheadleftarrow]{uu}
      \ar[ur, "\simeq"', "w_\Gamma"]
    \end{tikzcd}
  \end{equation*}
  one has $\gamma = w_\Sigma(w_{AB}',w_A',w_\Gamma)$ for
  $w_A' \colon \Delta_1.f_1^*A_1 \tosimeq \Delta_2.f_2^*A_2$ as
  $w_A' \coloneqq \gamma(w_A,w_\Gamma,w_\Delta;f_1,f_2,\refl)$ and
  \begin{align*}
    w_{AB}' \coloneqq \gamma(w_{AB},w_A,w_A';f_1.A_1,f_2.A_2,\refl) \colon
    \Delta_1.f_1^*A_1.f_1^*B_1 \tosimeq \Delta_2.f_2^*A_2.f_2^*B_2
  \end{align*}
\end{lemma}
\begin{proof}
  This follows from the fact that $w_\Sigma$ is stable under substitution.
  By definition, $w_{\Gamma.\Sigma}(w_{AB},w_A,w_\Gamma)$ is given as
  \begin{align*}
    (\vec{x}_1:\Gamma_1).\Sigma(a_1:A_1(\vec{x}_1)).B_1(\vec{x}_1,a_1)
    \vdash
    t_\Gamma(\vec{x}_1), \sigmarec_{d(\vec{x}_1)}
    :
    \Gamma_2.\Sigma(A_2,B_2)
  \end{align*}
  where $d(\vec{x}_1, a_1, b_1) = \pair_2(t_A(\vec{x}_1, a_1),
  t_B(\vec{x}_1,a_1,b_1))$, so $\gamma \colon \Delta.\Sigma(f_1^*A_1,f_2^*B_1)$
  is given as
  \begin{align*}
    (\vec{y}_1:\Delta_1).(s_1:\Sigma(f_1^*A_1,f_1^*B_1))
    \vdash
    w_\Delta(\vec{y_1}),
    \sigmarec_{d(f_1\vec{x}_1)}
    :
    \Delta_2.\Sigma(f_2^*A_2,f_2^*B_2)
  \end{align*}
  which is seen to be equals to $w_\Sigma(w_{AB}',w_A',w_\Gamma)$ by explicit
  computation.
\end{proof}

\begin{definition}[Extensional Kernel]\label{def:w-ett}
  Let $\McW_\ETT$ be the smallest subset of maps of $\McW$ containing all the
  identities such that:
  \begin{itemize}
    \item It contains maps homotopic to
    $\gamma(w_A, w_\Gamma, w_\Delta; f_1, f_2, H)$ of
    \Cref{def:glue} whenever
    $w_A,w_\Gamma, w_\Delta \in \McW_\ETT$ and
    $H \colon w_\Gamma \cdot f_1 \simeq f_2 \cdot t_\Delta$.
    \item It contains maps homotopic to $w_\Pi(w_{AB},w_A,w_\Gamma)$ of
    \Cref{def:id-pi} whenever $w_{AB},w_A,w_\Gamma \in \McW_\ETT$.
    \item It contains maps homotopic to $w_\Id(w_A,w_\Gamma)$ of
    \Cref{def:id-id} whenever $w_A,w_\Gamma \in \McW_\ETT$.
    \item It contains maps homotopic to $w_\Sigma(w_{AB},w_A,w_\Gamma)$ of
    \Cref{def:id-sigma} whenever $w_{AB},w_A,w_\Gamma \in \McW_\ETT$.
  \end{itemize}
\end{definition}




\begin{lemma}\label{lem:w-ett-free-ty}
  If $w_A \colon \Delta_1.A_1 \tosimeq \Delta_2.A_2 \in \McW_\ETT$ where $A_i$
  for $i=1,2$ are pullbacks of freely adjoined types by cofibrancy (i.e.
  $A_i \neq \Pi(C,D), \Id_C, \Sigma(C,D)$ for any types $C,D$) then there exists
  a unique context $\Theta$ along with a type $A \in \Ty\Theta$ such that $A_i$
  is the pullback of a unique map $f_i \colon \Gamma_i \to \Theta$.
  Moreover,
  $w_A \simeq \gamma(\id,\id,w_\Delta;f_1,f_2,H)$ for some (unique)
  $w_\Delta \colon \Delta_1 \tosimeq \Delta_2 \in \McW_\ETT$ and
  $H \colon f_2 \cdot w_\Delta \simeq f_1$.
  \begin{equation*}
    \begin{tikzcd}[cramped, sep=small]
      & \Delta_2.f_2^*A \ar{rr} \ar[two heads]{dd}
      \ar[phantom]{rrdd}[pos=0em]{\lrcorner}
      & & \Theta.A \\
      \Delta_1.f_1^*A
      \ar[rrru, phantom, "\simeq"{pos=0.4}]
      \ar[dashed]{ur}{w_A=\gamma}[description, inner sep=1]{\simeq}
      \ar[crossing over]{rr}
      \ar[phantom]{rrdd}[pos=0em]{\lrcorner}
      \ar[two heads]{dd}
      & & \Theta.A \ar[]{ur}[description, inner sep=1]{=}  \\
      & \Delta_2 \ar{rr}[near start]{f_2}
      & & \Theta \ar[crossing over, twoheadleftarrow]{uu} \\
      \Delta_1 \ar{rr}[swap]{f_1}
      \ar[rrru, phantom, "\simeq"]
      \ar[]{ur}[description, inner sep=1]{\simeq}
      & & \Theta \ar[crossing over, twoheadleftarrow]{uu}
      \ar[]{ur}[swap]{=}
    \end{tikzcd}
  \end{equation*}
\end{lemma}
\begin{proof}
  We apply induction on $w_A \in \McW_\ETT$.
  By \Cref{prop:cofib-ty-class}, $w_A \not\simeq w_\Pi,w_\Id,w_\Sigma$ of
  \Cref{def:id-pi,def:id-id,def:id-sigma} respectively.
  Suppose $w_A \simeq \gamma(w_A',w_\Gamma,w_\Delta; g_1,g_2,H)$ of
  \Cref{def:glue} for some $w_A' \colon \Gamma_1.A_1' \tosimeq \Gamma_2.A_2'$ over
  $w_\Gamma \colon \Gamma_1 \tosimeq \Gamma_2$ and maps
  $g_i \colon \Delta_i \to \Gamma_i$ where $i=1,2$.
  Because the outer-most constructor of $A_i = g_i^*A_i'$ is not
  $\Pi,\Id,\Sigma$, neither is the outer-most constructor of $A_i'$.
  So induction applies and we may express $w_A'$ as
  $w_A' \simeq \gamma'(\id,\id,w_\Gamma;f_1,f_2,H)$ for some unique
  $f_i \colon \Gamma_i \to \Theta_A$, where $i=1,2$ and
  $H' \colon f_2 \cdot w_\gamma \simeq f_1$.
  \begin{equation*}
    \begin{tikzcd}[cramped, sep=small]
      &
      \Delta_2.g_2^*f_2^*A \ar[rr] \ar[dd, two heads]
      \ar[phantom]{rrdd}[pos=0em]{\lrcorner}
      &
      & \Gamma_2.f_2^*A \ar{rr} \ar[two heads]{dd}
      \ar[phantom]{rrdd}[pos=0em]{\lrcorner}
      & & \Theta.A \\
      \Delta_1.g_1^*f_1^*A \ar[rr, crossing over] \ar[ru, "\simeq"{description}]
      \ar[dd, two heads] \ar[rrru, phantom, "\simeq"{pos=0.4}]
      \ar[phantom]{rrdd}[pos=0em]{\lrcorner}
      &
      &
      \Gamma_1.f_1^*A
      \ar[rrru, phantom, "\simeq"{pos=0.4}]
      \ar[dashed]{ur}{}[description, inner sep=1]{\simeq}
      \ar[crossing over]{rr}
      \ar[phantom]{rrdd}[pos=0em]{\lrcorner}
      & & \Theta.A \ar[]{ur}[description, inner sep=1]{=}  \\
      &
      \Delta_2 \ar[rr, "g_2"{pos=0.3}]
      &
      & \Gamma_2 \ar{rr}[near start]{f_2}
      & & \Theta \ar[crossing over, twoheadleftarrow]{uu} \\
      \Delta_1 \ar[rr, "g_1"'] \ar[ru, "\simeq"{description}]
      \ar[rrru, phantom, "\simeq"{pos=0.4}]
      &
      &
      \Gamma_1 \ar{rr}[swap]{f_1} \ar[uu, twoheadleftarrow, crossing over]
      \ar[rrru, phantom, "\simeq"]
      \ar[]{ur}[description, inner sep=1]{\simeq}
      & & \Theta \ar[crossing over, twoheadleftarrow]{uu}
      \ar[]{ur}[swap]{=}
    \end{tikzcd}
  \end{equation*}
  The result now follows because
  $w_A \simeq \gamma(\id,\id,w_\Gamma;g_1f_1,g_2f_2,H'H)$.
\end{proof}

\begin{lemma}\label{lem:w-ett-canon}
  Up to homotopy, we may write maps in $\McW_\ETT$ canonically:
  \begin{itemize}
    \item If
    $w \colon \Gamma_1.\Pi(A_1,B_1) \tosimeq \Gamma_2.\Pi(A_2,B_2) \in \McW_\ETT$
    then there exists $w_{AB} \colon \Gamma_1.A_1.B_1 \tosimeq \Gamma_2.A_2.B_2$
    over $w_A \colon \Gamma_1.A_1 \tosimeq \Gamma_2.A_2$ over
    $w_\Gamma \colon \Gamma_1 \tosimeq \Gamma_2$ such that
    $w \simeq w_\Pi(w_{AB},w_A,w_\Gamma)$.
    \item If
    $w \colon \Gamma_1.A_1.A_1.\Id_{A_1} \tosimeq \Gamma_2.A_1.A_1.\Id_{A_1} \in
    \McW_\ETT$ then there exists $w_A \colon \Gamma_1.A_1 \tosimeq \Gamma_2.A_2$
    over $w_\Gamma \colon \Gamma_1 \tosimeq \Gamma_2$ such that
    $w \simeq w_\Id(w_A,w_\Gamma)$.
    \item If
    $w \colon \Gamma_1.\Sigma(A_1,B_1) \tosimeq \Gamma_2.\Sigma(A_2,B_2) \in \McW_\ETT$
    then there exists $w_{AB} \colon \Gamma_1.A_1.B_1 \tosimeq \Gamma_2.A_2.B_2$
    over $w_A \colon \Gamma_1.A_1 \tosimeq \Gamma_2.A_2$ over
    $w_\Gamma \colon \Gamma_1 \tosimeq \Gamma_2$ such that
    $w \simeq w_\Sigma(w_{AB},w_A,w_\Gamma)$.
  \end{itemize}
\end{lemma}
\begin{proof}
  In each of the cases, we apply induction on $w \in \McW_\ETT$.
  For illustration, suppose
  $w \colon \Gamma_1.\Pi(A_1,B_1) \tosimeq \Gamma_2.\Pi(A_2,B_2)$.
  Then, by \Cref{prop:cofib-ty-class}, $w \neq w_\Id,w_\Sigma$ of
  \Cref{def:id-id,def:id-sigma} respectively.
  If $w \simeq w_\Sigma(w_{AB},w_A,w_\Gamma)$ as in \Cref{def:id-pi} then we are
  done.
  It remains to consider the case
  $w \simeq \gamma(w',w_\Delta,w_\Gamma;f_1,f_2,H) \colon \Gamma_1.\Pi(A_1,B_1) \tosimeq
  \Gamma_2.\Pi(A_2,B_2)$ as in \Cref{def:glue} for
  $w' \colon \Delta_1.C_1 \tosimeq \Delta_2.C_2$ and
  $f_i \colon \Gamma_i \to \Delta_i$ such that
  $H \colon w_\Delta \cdot f_1 \simeq f_2 \cdot w_\Gamma$.
  By \Cref{prop:cofib-ty-class}, there exists $A_i',B_i'$ such that
  $C_i = \Pi(A_i',B_i')$ so that $A_i = f_i^*A_i'$.
  Furthermore, by $\UIP$, one has that $H \simeq \refl$.
  Therefore, by \Cref{lem:glue},
  $w \simeq \eta(w',w_\Delta,w_\Gamma;f_1,f_2,H) \simeq
  \eta(w',w_\Delta,w_\Gamma;f_1,f_2,\refl)$.
  But induction applies on
  $w' \colon \Delta_1.C_1 = \Delta_1.\Pi(A_1',B_1') \tosimeq
  \Delta_2.\Pi(A_2',B_2') = \Delta_2.C_2$ so that
  $w' \simeq w_\Pi(w_{AB}',w_A',w_\Delta')$.
  At this point, \Cref{lem:id-pi-gamma} to conclude the result.

  The cases for $\Id$ and $\Sigma$ are identical, except instead of using
  \Cref{lem:id-pi-gamma} one uses \Cref{lem:id-id-gamma} and
  \Cref{lem:id-sigma-gamma} respectively.
\end{proof}

\begin{proposition}\label{prop:w-ett-thin-subgpd}
  $\McW_\ETT$ is a wide subcategory of $\McW$ closed under homotopy inverses
  such that if $w,w' \colon \Theta_1 \tosimeq \Theta_2 \in \McW_\ETT$ then
  $w \simeq w'$.
\end{proposition}
\begin{proof}
  To observe it is closed under homotopic inverses, it suffices that each of the
  constructions of $\gamma, w_\Pi,w_\Id,w_\Sigma$ are stable under homotopy inverses.
  For example, if That is, if
  $\overline{w_{AB}},\overline{w_A},\overline{w_\Gamma}$ are respectively
  homotopy inverses of $w_{AB},w_A, w_\Gamma$ then
  $w_\Sigma(\overline{w_{AB}},\overline{w_A},\overline{w_\Gamma})$ is a homotopy
  inverse of $w_\Sigma(w_{AB},w_A,w_\Gamma)$.
  Likewise, the homotopy inverses of $\gamma, w_\Pi,w_\Id$ are given as
  repeating the same construction using the homotopy inverses of their
  respective arguments.

  By \Cref{prop:cofib-ty-class}, there is a well-founded order $<$ on $\ob\bC$ such that
  \begin{align*}
    \Theta.A.B < \Theta.\Pi(A,B), \Theta.\Sigma(A,B)
    &&
       \Theta.A < \Theta.A.A < \Theta.A.A.\Id_A
    &&
       \Theta < \Theta.A
  \end{align*}
  We proceed by well-founded lexicographic induction on
  $(\Theta_1,\Theta_2,\Theta_3)$ to show that if
  $w \colon \Theta_1 \to \Theta_2 \in \McW_\ETT$ and
  $w' \colon \Theta_2 \to \Theta_3 \in \McW_\ETT$ then $w'w \in \McW_\ETT$.
  Again by \Cref{prop:cofib-ty-class}, one may consider the following mutually
  exclusive cases for the domain and codomain of $w$:
  \begin{itemize}
    \item
    \emph{$w \colon
      \Delta_1.A_1 \tosimeq \Delta_2.A_2$ as from \Cref{def:glue} where
      the outermost type-constructor of $A_i$ is not $\Id,\Pi,\Sigma$.}
    Then, by cofibrancy, $w' \colon \Delta_2.A_2 \tosimeq \Delta_3.A_3$ for
    some $A_3$ whose outermost type constructor is also not $\Id,\Pi,\Sigma$.
    Using \Cref{lem:w-ett-free-ty} on $w$ and $w'$ respectively, one may find
    $\Theta_A \in \bC$ and $A \in \Ty\Theta$ such that there are unique maps
    $g_i \colon \Delta_i \to \Theta_A$ where $i=1,2,3$ such that $A_i = f_i^*A$.
    Moreover, $w \simeq \gamma(\id,\id,w_\Delta;f_1,f_2,H)$ for some
    $w_\Delta \colon \Delta_1 \tosimeq \Delta_2$ and
    $H \colon f_2 \cdot w_\Delta \simeq f_1$.
    Likewise,
    $w' \simeq \gamma(\id,\id,w_\Delta';f_2,f_3,H')$ for some
    $w_\Delta' \colon \Delta_2 \tosimeq \Delta_3$ and
    $H \colon f_3 \cdot w_\Delta' \simeq f_2$.
    Therefore, $w'w \simeq \gamma(\id,\id,w_\Delta'w_\Delta;f_1,f_3,H'H)$.
    By induction, $w_\Delta'w_\Delta \in \McW_\ETT$, so $w'w \in \McW_\ETT$.
    \item \emph{$w \colon \Gamma_1.\Pi(A_1,B_1) \tosimeq \Gamma_2.\Pi(A_2,B_2)$,
      or
      $w \colon \Gamma_1.A_1.A_1.\Id_{A_1} \tosimeq \Gamma_2.A_2.A_2.\Id_{A_2}$,
      or $w \colon \Gamma_1.\Sigma(A_1,B_1) \tosimeq \Gamma_2.\Sigma(A_2,B_2)$.}
    These cases are all identical.
    For illustration, suppose
    $w \colon \Gamma_1.\Pi(A_1,B_1) \tosimeq \Gamma_2.\Pi(A_2,B_2)$.
    Then, by \Cref{prop:cofib-ty-class},
    $w' \colon \Gamma_2.\Pi(A_2,B_2) \tosimeq \Gamma_3.\Pi(A_3,B_3)$.
    Using \Cref{lem:w-ett-canon},
    $w \simeq w_\Pi(w_{AB},w_A,w_\Gamma)$ for
    $w_{AB} \colon \Gamma_1.A_1.B_1 \tosimeq \Gamma_2.A_2.B_2$ over
    $w_A \colon \Gamma_1.A_1 \tosimeq \Gamma_2.A_2$ over
    $w_\Gamma \colon \Gamma_1 \tosimeq \Gamma_2$.
    Also by \Cref{lem:w-ett-canon},
    $w' \simeq w_\Pi(w_{AB}',w_A',w_\Gamma')$ for
    $w_{AB}' \colon \Gamma_2.A_2.B_2 \tosimeq \Gamma_3.A_3.B_3$ over
    $w_A' \colon \Gamma_2.A_2 \tosimeq \Gamma_3.A_3$ over
    $w_\Gamma' \colon \Gamma_2 \tosimeq \Gamma_3$.
    Therefore,
    \begin{align*}
      w'w \simeq w_\Pi(w_{AB}'w_{AB}, w_A'w_A, w_\Gamma'w_\Gamma)
    \end{align*}
    By induction, $w_{AB}'w_{AB}, w_A'w_A, w_\Gamma'w_\Gamma \in \McW_\ETT$, so $w'w \in \McW_\ETT$.
  \end{itemize}

  Finally, we proceed to show by well-founded lexicographic induction on
  $(\Theta_1,\Theta_2)$ to show that that if
  $w,w' \colon \Theta_1 \tosimeq \Theta_2 \in \McW_\ETT$ then $w \simeq w'$.
  Once again by cofibrancy, one may consider the following (mutually exclusive)
  cases for the domain and codomain of $w$:
  \begin{itemize}
    \item
    \emph{$w \colon \Delta_1.A_1 \tosimeq \Delta_2.A_2$ as from \Cref{def:glue} where
      the outermost type-constructor of $A_i$ is not $\Id,\Pi,\Sigma$.}
    Then, by \Cref{lem:w-ett-free-ty}, there exists $\Theta_A$ and $A \in \Ty\Theta_A$
    such that there are unique maps $f_i \colon \Delta_i \to \Theta_A$ such that
    $\Delta_i.A_i = \Delta_i.f_i^*\Theta_A$.
    Moreover, $w \simeq \gamma(\id,\id,u_\Delta; f_1,f_2,H)$ for
    $u_\Delta \colon \Delta_1 \tosimeq \Delta_2 \in \McW$ and
    $H \colon f_2 \cdot u_\Delta \simeq f_1$.
    Also by \Cref{lem:w-ett-free-ty},
    $w' \simeq \gamma(\id,\id,u_\Delta'; f_1,f_2,H')$ for
    $u_\Delta' \colon \Delta_1 \tosimeq \Delta_2 \in \McW$ and
    $H' \colon f_2 \cdot u_\Delta \simeq f_1$.
    By induction, $u_\Delta \simeq u_\Delta'$ and so
    $H \simeq H'$ by $\UIP$.
    Hence, by \Cref{lem:glue}, $w \simeq w'$.
    \item \emph{$w \colon \Gamma_1.\Pi(A_1,B_1) \tosimeq \Gamma_2.\Pi(A_2,B_2)$,
      or
      $w \colon \Gamma_1.A_1.A_1.\Id_{A_1} \tosimeq \Gamma_2.A_2.A_2.\Id_{A_2}$,
      or $w \colon \Gamma_1.\Sigma(A_1,B_1) \tosimeq \Gamma_2.\Sigma(A_2,B_2)$.}
    These cases are all identical.
    For illustration, suppose
    $w \colon \Gamma_1.\Pi(A_1,B_1) \tosimeq \Gamma_2.\Pi(A_2,B_2)$.
    Then, by \Cref{lem:w-ett-canon},
    $w \simeq w_\Pi(w_{AB},w_A,w_\Gamma)$ for
    $w_{AB} \colon \Gamma_1.A_1.B_1 \tosimeq \Gamma_2.A_2.B_2$ over
    $w_A \colon \Gamma_1.A_1 \tosimeq \Gamma_2.A_2$ over
    $w_\Gamma \colon \Gamma_1 \tosimeq \Gamma_2$.
    Also by \Cref{lem:w-ett-canon},
    $w' \simeq w_\Pi(w_{AB}',w_A',w_\Gamma')$ for
    $w_{AB}' \colon \Gamma_1.A_1.B_1 \tosimeq \Gamma_2.A_2.B_2$ over
    $w_A' \colon \Gamma_1.A_1 \tosimeq \Gamma_2.A_2$ over
    $w_\Gamma' \colon \Gamma_1 \tosimeq \Gamma_2$.
    By induction, $w_{AB} \simeq w_{AB}'$ and $w_A' \simeq w_A$ and
    $w_\Gamma' \simeq w_\Gamma$, so $w \simeq w'$.
    \qedhere
  \end{itemize}
\end{proof}

\begin{lemma}\label{lem:w-ett-grading-proj} $ $
  \begin{enumerate}
    \item \label{itm:grading} If
    $\Gamma_1 \tosimeq \Gamma_2 \in \McW_\ETT$ then $\Gamma_1$ and
    $\Gamma_2$ are of the same length.
    \item \label{itm:proj} If
    $w_\Gamma \colon \Gamma_1 \tosimeq \Gamma_2 \in \McW_\ETT$ then there is
    $w_{\ft{\Gamma}} \colon \ft{\Gamma_1} \tosimeq \ft{\Gamma_2} \in
    \McW_\ETT$ such that
    $(\pi,\pi) \colon w_\Gamma \to w_{\ft{\Gamma}} \in \bC^\to$.
  \end{enumerate}
\end{lemma}
\begin{proof}
  Easy induction on $w \colon \Gamma_1 \tosimeq \Gamma_2 \in \McW_\ETT$.
\end{proof}

As mentioned previously, our goal is now to construct a category
$\vbr{\bC}_\ETT$ from $\bC$ such that all maps in $\McW_\ETT$ are formally
identified with identities.
To do so, and in view of \Cref{lem:w-ett-grading-proj}\ref{itm:grading}, there
is an equivalence relation $\sim$ on each $\ob_n{\bC}$ such that
$\Gamma_1 \sim \Gamma_2$ exactly when there is
$\Gamma_1 \tosimeq \Gamma_2 \in \McW_\ETT$.
We would like to take as objects of $\vbr{\bC}_\ETT$ equivalence classes of
objects in $\ob\bC$ under the $\sim$ relation.
Hence, we need to define what is a map in $\vbr{\bC}_\ETT$ between two such
equivalence classes:

\begin{definition}\label{def:w-ett-maps}
  Let $\set{\Gamma_i}_{i \in I}$ and $\set{\Delta_j}_{j \in J}$ be
  equivalence classes in $(\ob\bC)/\sim$.
  Define a relation $\approx$ on
  $\coprod_{i,j} \bC(\Gamma_i,\Delta_j)$ such that $f_1 \approx f_2$
  for $f_k \colon \Gamma_{i_k} \to \Delta_{j_k}$ exactly when there is
  some
  $w_\Gamma \colon \Gamma_{i_1} \tosimeq \Gamma_{i_2} \in \McW_\ETT$ and
  $w_\Delta \colon \Delta_{j_1} \tosimeq \Delta_{j_2} \in \McW_\ETT$
  such that
  \begin{equation*}
    \begin{tikzcd}
      \Gamma_{i_1} \ar{r}{f_1} \ar{d}[swap]{\McW_\ETT \ni w_\Gamma}{\simeq} \ar[phantom]{rd}{\simeq} &
      \Delta_{j_1} \ar{d}[swap]{\simeq}{w_\Delta \in \McW_\ETT} \\
      \Gamma_{i_2} \ar{r}[swap]{f_2} &
      \Delta_{j_2}
    \end{tikzcd}
  \end{equation*}
  commutes up to homotopy.
\end{definition}
Because $\McW_\ETT$ is a wide subcategory closed under homotopy
inverses, $\approx$ is an equivalence relation and $w \approx \id$
whenever $w \colon \dom{w} \to \cod{w} \in \McW_\ETT$.
Moreover, by \Cref{lem:w-ett-grading-proj}\ref{itm:proj}, whenever
$\Gamma_1 \tosimeq \Gamma_2 \in \McW_\ETT$ we have that
$\pi_{\Gamma_1} \approx \pi_{\Gamma_2}$.
Denote by $[-]$ equivalence classes $(\ob\bC)/\sim$ and
$(\mor\bC)/\approx$.

Thanks to \Cref{prop:w-ett-thin-subgpd}, we have the
following equivalent characterisation of $\approx$.

\begin{lemma}\label{lem:approx-char}
  Let $\set{\Gamma_i}_{i \in I}$ and $\set{\Delta_j}_{j \in J}$ be
  equivalence classes in $(\ob\bC)/\sim$ and take
  $f_k \colon \Gamma_{i_k} \to \Delta_{j_k}$ for $k=1,2$.
  Suppose $w_\Gamma \colon \Gamma_{i_1} \tosimeq \Gamma_{i_2} \in \McW_\ETT$ and
  $w_\Delta \colon \Delta_{j_1} \tosimeq \Delta_{j_2} \in \McW_\ETT$.
  Then,
  \begin{equation*}
    f_1 \approx f_2 \text{ if and only if }
      \begin{tikzcd}
        \Gamma_{i_1}
        \ar{r}{f_1} \ar{d}[swap]{\McW_\ETT \ni w_\Gamma}{\simeq} \ar[phantom]{rd}{\simeq} &
        \Delta_{j_1}
        \ar{d}[swap]{\simeq}{w_\Delta \in \McW_\ETT} \\
        \Gamma_{i_2}
        \ar{r}[swap]{f_2} &
        \Delta_{j_2}
      \end{tikzcd}
  \end{equation*}
\end{lemma}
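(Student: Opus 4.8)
The plan is to recognise this as a statement that the existential quantifier in \Cref{def:w-ett-maps} may be upgraded to a universal one: once the defining square commutes up to homotopy for \emph{some} pair of witnesses in $\McW_\ETT$, it commutes for \emph{every} such pair. The reverse implication is immediate, since if the displayed square commutes up to homotopy for the given $w_\Gamma$ and $w_\Delta$, then these very maps witness $f_1 \approx f_2$ by \Cref{def:w-ett-maps}.

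For the forward implication, I would unpack the hypothesis $f_1 \approx f_2$ to obtain some $w_\Gamma' \colon \Gamma_{i_1} \simeq \Gamma_{i_2}$ and $w_\Delta' \colon \Delta_{j_1} \simeq \Delta_{j_2}$ in $\McW_\ETT$ with $w_\Delta' \cdot f_1 \simeq f_2 \cdot w_\Gamma'$. The decisive step is to compare these with the given witnesses via \Cref{lem:grading-proj}(\ref{itm:uip}): since $w_\Gamma$ and $w_\Gamma'$ are parallel maps both lying in $\McW_\ETT$, we get $w_\Gamma \simeq w_\Gamma'$, and likewise $w_\Delta \simeq w_\Delta'$. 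It then remains to concatenate homotopies. Using that type-theoretic homotopy is a congruence for composition — so $w_\Delta \simeq w_\Delta'$ yields $w_\Delta \cdot f_1 \simeq w_\Delta' \cdot f_1$, and $w_\Gamma' \simeq w_\Gamma$ yields $f_2 \cdot w_\Gamma' \simeq f_2 \cdot w_\Gamma$ — I would assemble the chain
\[
  w_\Delta \cdot f_1 \simeq w_\Delta' \cdot f_1 \simeq f_2 \cdot w_\Gamma' \simeq f_2 \cdot w_\Gamma,
\]
which is precisely the commutativity of the displayed square for the prescribed $w_\Gamma$ and $w_\Delta$.

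The only point requiring care is the congruence of type-theoretic homotopy under pre- and post-composition, together with the transitivity used to splice the three homotopies together; both are standard consequences of Garner's identity-context calculus \cite{garner09} and the ambient $\UIP$-structure, and I do not expect them to present any genuine obstacle. The substantive input is entirely \Cref{lem:grading-proj}(\ref{itm:uip}), whose role is exactly to render the particular choice of equivalences in $\McW_\ETT$ immaterial.
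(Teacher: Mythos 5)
Your proposal is correct and matches the paper's own (very terse) proof: the paper likewise dispatches the $\Leftarrow$ direction as immediate from \Cref{def:w-ett-maps} and attributes the $\Rightarrow$ direction entirely to \Cref{lem:grading-proj}(\ref{itm:uip}), which is exactly the witness-comparison step you make explicit. Your filling-in of the homotopy whiskering and transitivity needed to splice $w_\Delta \cdot f_1 \simeq w_\Delta' \cdot f_1 \simeq f_2 \cdot w_\Gamma' \simeq f_2 \cdot w_\Gamma$ is the right (and standard) completion of that argument.
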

\begin{proof}
  The $\Leftarrow$ direction is by definition while the $\Rightarrow$
  direction makes crucial use of
  \Cref{prop:w-ett-thin-subgpd}.
\end{proof}

\begin{lemma}\label{lem:ext-ker}
  Suppose $\bE \in \CxlCat_\ETT$ and
  $F \colon \bC \to \abs{\bE} \in \CxlCat_{\ITT+\UIP}$.
  If $w \colon \Gamma_1 \tosimeq \Gamma_2 \in \McW_\ETT$ then
  $Fw$ is the identity.
\end{lemma}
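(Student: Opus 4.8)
The plan is to prove the statement by induction on the construction of $\McW_\ETT$, exploiting the induction principle that accompanies its definition as a smallest wide subcategory (\Cref{def:w-ett}), exactly as in the proof of \Cref{lem:grading-proj}; the property to be established is $P(w) \colon Fw = \id$. Two features of the codomain $\abs{\bE}$ drive the argument. First, since $\bE$ satisfies equality reflection (\Cref{def:eq-reflect}), its $\UIP$-structure is given by reflexivity, so every identity proof in $\abs{\bE}$ is definitionally $\refl$; consequently any application of $\MsJ$-elimination or of $\transport$ computes to an identity. Second, equality reflection promotes from sections to arbitrary parallel maps: if $g_1, g_2 \colon \Gamma \to \Delta$ admit a homotopy $g_1 \simeq g_2$, then $g_1 = g_2$. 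A map $\Gamma \to \Delta$ corresponds, after weakening $\Delta$ over $\Gamma$, to a section $\Gamma \to \Gamma.\Delta$, and a homotopy of maps induces one of the associated sections, to which \Cref{def:eq-reflect} applies level by level. Since $F$ preserves all contextual and logical structure, it preserves homotopies; combining this with the second feature, it suffices at each step to show $Fw \simeq \id$, whence $Fw = \id$.

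The closure conditions are immediate. For identities $F\id = \id$, and $F(w'w) = Fw' \cdot Fw$, so $P$ is closed under composition. If $w \simeq w'$ then $Fw \simeq Fw'$ because $F$ preserves homotopies, so $P$ passes to homotopy classes; and if $w'$ is a homotopy inverse of $w$ with $Fw = \id$, then $Fw' = Fw' \cdot Fw = F(w'w) \simeq \id$, giving $Fw' = \id$. It remains to treat the five families of generators.

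For $\eta(f_1, f_2, H)$ of \Cref{def:1}, the functor sends $H$ to $FH \colon Ff_1 \simeq Ff_2$; equality reflection gives $Ff_1 = Ff_2$ and $\UIP$ forces $FH$ to be reflexivity, so $F\eta = \MsJ_{z.\lambda x.x}(Ff_1, Ff_2, FH)$ computes by the $\MsJ$-computation rule to the identity. For $\gamma(w_\Gamma, w_{\ft{\Gamma}}, w_\Delta; f_1, f_2)$ of \Cref{def:glue}, the inductive hypothesis gives $Fw_\Gamma = Fw_{\ft{\Gamma}} = Fw_\Delta = \id$; the commuting square $(w_\Delta, w_{\ft{\Gamma}}) \colon f_1 \to f_2$ then forces $Ff_1 = Ff_2$, and since $F$ preserves substitution pullbacks the objects $F(f_1^*\Gamma_1)$ and $F(f_2^*\Gamma_2)$ coincide. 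The two defining components of $F\gamma$ reduce to the canonical projection and connecting map of this pullback, so $F\gamma = \id$ by the universal property. For $w_\Id(w_A)$ of \Cref{def:id-id}, the inductive hypothesis makes the slice datum $t$ an identity under $F$, so its first two components are identities and its third, $\transport(p_1)$, is transport along a reflexivity, hence an identity by $\UIP$; thus $Fw_\Id = \id$.

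The remaining cases $w_\Pi$ and $w_\Sigma$ carry the heaviest bookkeeping, and I expect them to be the main obstacle. In each, the inductive hypothesis collapses the slice data defining $w_{AB}$ (and $w_A$) to identities under $F$: the components $t_A, t_B$ and the chosen inverses $\ell_A, \ell_B, r_A, r_B$ all become identities. Substituting these into the defining formulas of \Cref{def:id-pi} and \Cref{def:id-sigma} and simplifying by the $\beta$- and $\eta$-rules for $\Pi$ (respectively the computation rule for $\sigmarec$) shows $Fw_\Pi$ and $Fw_\Sigma$ to be homotopic to the identity on the respective $\Pi$- and $\Sigma$-types, and equality reflection upgrades these homotopies to equalities. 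The one point requiring care throughout is the promotion of equality reflection from sections, as stated in \Cref{def:eq-reflect}, to the arbitrary parallel maps appearing here; once that is secured, every generator collapses as above and the induction closes.
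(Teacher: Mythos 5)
Your proposal is correct and takes essentially the same route as the paper's proof: induction over the generating clauses of $\McW_\ETT$, with the closure cases handled by functoriality of $F$ and reflection in $\abs{\bE}$, the $\gamma$ case by uniqueness of the pullback factorisation, and the $\eta$, $w_\Pi$, $w_\Id$, $w_\Sigma$ cases by collapsing the defining slice data to identities in the extensional codomain. The only substantive difference is that you spell out the promotion of equality reflection from sections to arbitrary parallel maps (and the resulting collapse of all identity proofs to reflexivity), which the paper uses tacitly under the word ``reflection.''
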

\begin{proof}
  Apply induction on $w$.
  If $w$ or its homotopy inverse is homotopic to some other map
  which $F$ collapses to the identity then by reflection, $w$ is
  also the identity.
  Likewise, if $w$ is itself the identity or the composition of maps
  which $F$ maps to the identity, then $Fw$ must also be the
  identity.
  So we just consider the following cases:
  \begin{itemize}
    \item \emph{$w \simeq \gamma(w_\Gamma, w_{\ft \Gamma}, w_\Delta; f_1,f_2, H)$ as
      in \Cref{def:glue} where $w_\Gamma,w_{\ft\Gamma}, w_\Delta \in \McW_\ETT$
      and $H \colon w_\Gamma \cdot f_1 \simeq f_2 \cdot w_\Delta$.}
    Inductively assume $w_\gamma,w_{\ft\Gamma},w_\Delta$ are all
    mapped to the identities.
    Then by uniqueness of the map in \Cref{def:glue} combined with equality
    reflection in the extensional setting, again $Fw = \id$.
    \item \emph{$w \simeq w_\Pi(w_{AB},w_A,w_\Gamma)$ or $w_\Id(w_A,w_\Gamma)$ or
      $w_\Sigma(w_{AB},w_A,w_\Gamma)$ as in \Cref{def:id-pi} or \Cref{def:id-id}
      or \Cref{def:id-sigma} respectively, where
      $w_\Gamma,w_A,w_{AB} \in \McW_\ETT$.}
    These cases are similar.
    Consider, for example, the $\Pi$-types case.
    By induction, $Fw_\Gamma, Fw_A, Fw_{AB}$ are all identities.
    And so
    $F\Pi(A_1,B_1) = \Pi(FA_1,FB_1) = \Pi(FA_2,FB_2) =
    F\Pi(A_2,B_2)$.
    Because $F$ preserves weak equivalences and maps into an
    extensional type theory, the result follows by reflection.
    \qedhere
  \end{itemize}
\end{proof}


\section{Freely Generated Extensional Type Theory}\label{sec:c-ett}
Having defined the wide subcategory $\McW_\ETT$, we can now construct
the \emph{freely generated extensional type theory} $\vbr{\bC}_\ETT$
obtained by formally collapsing all maps in $\McW_\ETT$ to identities.
In particular:
\begin{itemize}
  \item In \Cref{prop:free-ett-cxl-cat}, we show that $\vbr{\bC}_\ETT$ defined
  in \Cref{def:free-ett} has the structure of a contextual category.
  Hence, $\vbr{\bC}_\ETT$ in \Cref{def:free-ett} corresponds to $\bQ$ in
  \cite[Definition 3.2.14]{hofmann:extensional-constructs} and our
  \Cref{prop:free-ett-cxl-cat} corresponds to \cite[Proposition
  3.2.15]{hofmann:extensional-constructs}.
  \item In \Cref{lem:strict-lifting}, we show that $\bC \to \vbr{\bC}_\ETT$ is a
  trivial fibration.
  This corresponds to \cite[Proposition 3.2.16]{hofmann:extensional-constructs}.
\end{itemize}

The main result of this section is that we verify that $\vbr{\bC}_\ETT$ has all
the structure of a contextual category.

main results in this
section correspond to \cite[Definition 3.2.14]{hofmann:extensional-constructs} and results in this

\begin{definition}[Freely Generated Extensional Type Theory]\label{def:free-ett}
  Define $\vbr{\bC}_\ETT$ as the category:
  \begin{itemize}
    \item \emph{Objects.} $\ob{\vbr{\bC}_\ETT} \coloneqq (\ob\bC)/\sim$.
    \item \emph{Maps.} $\mor{\vbr{\bC}_\ETT} \coloneqq (\mor\bC)/\approx$.
    \item \emph{Identities.} $\id_{[\Gamma]} \coloneqq [\id_\Gamma]$.
    \item \emph{Composition.} $[f] \cdot [g] \coloneqq [fwg]$ where
    $w \colon \cod{g} \tosimeq \dom{f} \in \McW_\ETT$ is chosen arbitrarily.
  \end{itemize}
  We also equip $\vbr{\bC}_\ETT$ with the following data
  required for a contextual category:
  \begin{itemize}
    \item \emph{Grading.} $\ob{\vbr{\bC}_\ETT} \coloneqq
    \coprod_{n \in \bN} \set{[\Gamma] \mid \Gamma \in \ob_n{\bC}}$.
    \item \emph{Empty context.} $1 \coloneqq [1]$.
    \item \emph{Context projections.} For each
    $[\Gamma] \in \ob_{n+1}{\vbr{\bC}_\ETT}$, put
    $\ft{[\Gamma]} \coloneqq [\ft\Gamma]$ and
    $\pi_{[\Gamma]} \coloneqq [\pi_\Gamma] \colon [\Gamma] \to
    [\ft\Gamma]$.
    \item \emph{Substitutions.} For each
    $[\Gamma] \in \ob_{n+1}{\vbr{\bC}_\ETT}$ and
    $[f] \colon [\Delta] \to \ft{[\Gamma]}$, put
    $[f]^*[\Gamma] \coloneqq [w_\Delta^*f^*w_\Gamma^*\Gamma]$
    and $[f].[\Gamma] \coloneqq [(w_\Delta f w_\Gamma).\Gamma]$
    where:
    \begin{itemize}
      \item $\Delta_0$ is any representative of $[\Delta]$.
      \item $f \colon \Delta_0 \to \Theta$ is any representative of $[f]$ for
      $\Theta$ any representative of $\ft[\Gamma] = [\ft\Gamma]$ (this equality
      is by \Cref{lem:free-ett-basic-tt}).
      \item $w_\Gamma \colon \Theta \tosimeq \ft\Gamma$ and
      $w_\Delta \colon \Delta \tosimeq \Delta_0$ are any maps in $\McW_\ETT$.
    \end{itemize}
    That is, we pullback along the map
    \begin{equation*}
      \begin{tikzcd}
        \Delta \ar{r}[swap]{\simeq}{w_\Delta}
        & \Delta_0 \ar{r}{f}
        & \Theta \ar{r}[swap]{\simeq}{w_\Gamma}
        & \ft\Gamma
      \end{tikzcd}
    \end{equation*}
  \end{itemize}
\end{definition}

\begin{proposition}\label{prop:free-ett-cxl-cat}
  The category $\vbr{\bC}_\ETT$ equipped with the data in \Cref{def:free-ett} forms
  a contextual category and the assignment
  $[-] \colon \bC \to \vbr{\bC}_\ETT$ sending objects and maps to
  their equivalence classes is a map of contextual categories.
\end{proposition}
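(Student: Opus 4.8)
The plan is to verify, in turn, that $\vbr{\bC}_\ETT$ is a category, that the contextual data of \Cref{def:free-ett} are well-defined and satisfy the contextual-category axioms, and finally that $[-]$ preserves all of this structure. The organising principle throughout is the slogan that every map in $\McW_\ETT$ becomes an identity: concretely $w \approx \id$ for $w \in \McW_\ETT$, and any two choices of auxiliary pullback data differ by a comparison map of the form $\gamma(w_\Gamma, w_{\ft{\Gamma}}, w_\Delta; f_1, f_2)$, which by \Cref{lem:glue} is an equivalence and by the closure clause of \Cref{def:w-ett} lies in $\McW_\ETT$, hence is itself $\approx \id$. Thus each apparent dependence on a choice of representative or connecting equivalence collapses upon passing to $\approx$-classes.

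First I would check that composition is well-defined and that $\vbr{\bC}_\ETT$ is a category. The class $[fwg]$ is independent of the connecting equivalence $w$, since any two such are homotopic by \Cref{lem:grading-proj}(\ref{itm:uip}), and independent of the representatives $f,g$ once one knows that $\approx$ is a congruence compatible with composition, which follows from \Cref{lem:approx-char} together with the wideness of $\McW_\ETT$; associativity and the unit laws then reduce to the corresponding identities in $\bC$ modulo $\approx$. The grading is well-defined by \Cref{lem:grading-proj}(\ref{itm:grading}), the father operation $\ft[\Gamma] = [\ft\Gamma]$ by \Cref{lem:grading-proj}(\ref{itm:proj}), and the projections $\pi_{[\Gamma]} = [\pi_\Gamma]$ by the observation following \Cref{def:w-ett-maps} that $\Gamma_1 \sim \Gamma_2$ forces $\pi_{\Gamma_1} \approx \pi_{\Gamma_2}$. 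That $[1]$ is terminal and the unique object of length $0$ is immediate from the same properties in $\bC$.

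The crux is substitution. Here I would show that $[f]^*[\Gamma]$ and $[f].[\Gamma]$ do not depend on the chosen representative $\Delta_0$ of $[\Delta]$, the representative $f$, the representative $\Theta$ of $[\ft\Gamma]$, or the connecting equivalences $w_\Gamma, w_\Delta \in \McW_\ETT$. Given two such choices, the associated genuine pullbacks in $\bC$ sit in a diagram of the shape treated in \Cref{def:glue}, with the two representatives of $[f]$ forming a map of arrows up to homotopy; the induced comparison map is then exactly a $\gamma$, so that by \Cref{lem:glue} and \Cref{def:w-ett} it lies in $\McW_\ETT$ and the two pullbacks have the same $\approx$-class. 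Strict functoriality, namely $\id^*[\Gamma] = [\Gamma]$, $\id.[\Gamma] = \id$, and the composition laws for $(-)^*$ and $(-).(-)$, then descends from strict functoriality in $\bC$, using that identities and composites of $\McW_\ETT$-maps remain in $\McW_\ETT$ so the correcting equivalences can be absorbed.

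The hard part will be confirming that the substitution square is genuinely a pullback in the quotient $\vbr{\bC}_\ETT$, since quotienting morphisms by $\approx$ can in principle destroy universal properties. I would establish this by lifting: given a cone $[g],[h]$ into $[\Gamma]$ and $[\Delta]$ that commutes up to $\approx$ with the projection and $[f]$, I choose representatives and use \Cref{lem:approx-char} to realise the commutativity as a homotopy-commuting square in $\bC$ after absorbing the $\McW_\ETT$-corrections; the strict pullback in $\bC$ then supplies a factoring map, whose class is the required mediating morphism, and uniqueness follows from \Cref{lem:approx-char} and \Cref{lem:grading-proj}(\ref{itm:uip}), which force any two factorisations to be $\approx$-equal. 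Finally, that $[-] \colon \bC \to \vbr{\bC}_\ETT$ is a contextual functor is read off directly from \Cref{def:free-ett}: it preserves the grading, terminal object, and projections by construction, and it preserves substitution pullbacks because taking $\Delta_0 = \Delta$, $\Theta = \ft\Gamma$, and $w_\Gamma = w_\Delta = \id$ recovers $[f^*\Gamma]$ and $[f.\Gamma]$ on the nose via $\id^*\Gamma = \Gamma$ in $\bC$.
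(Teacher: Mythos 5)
There is a genuine gap, and it sits exactly where the paper has to work hardest. Your organising principle---that any two choices of pullback data differ by a comparison map of the form $\gamma(w_\Gamma, w_{\ft{\Gamma}}, w_\Delta; f_1, f_2)$---fails for the independence of substitution on the representative of $[f]$. The gluing construction of \Cref{def:glue} is only defined when $(w_\Delta, w_{\ft{\Gamma}}) \colon f_1 \to f_2$ is a map in $\bC^\to$, i.e.\ when $w_{\ft{\Gamma}} f_1 = f_2 w_\Delta$ holds \emph{strictly}; this is needed even to write down $\gamma = (w_\Delta\pi, w_\Gamma \cdot f_1.\Gamma_1)$ as a map into the pullback $f_2^*\Gamma_2$, and the closure clause of \Cref{def:w-ett} inherits the same restriction. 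But two representatives $f, f'$ of the same class $[f]$ are, by the very definition of $\approx$, related by a square that commutes only \emph{up to homotopy}---you say as much yourself (``a map of arrows up to homotopy'')---so the needed comparison map is not ``exactly a $\gamma$'', and \Cref{lem:glue} does not apply. This is why the paper's proof of \Cref{lem:free-ett-subst} splits the argument: independence of $w_\Gamma, w_\Delta$ is handled by \Cref{lem:free-ett-subst-id} (where the relevant squares are strict, so gluing applies), while independence of the representative of $[f]$ is handled by reducing, via \Cref{lem:approx-char} and strict functoriality, to two \emph{homotopic} composites $w_\Gamma f w_\Delta \simeq w_\Gamma' f' w_\Delta' \colon \Delta \to \ft{\Gamma}$, and then invoking \Cref{lem:1}: the transport equivalences $\eta(f_1,f_2,H)$ obtained by $\MsJ$-elimination, which are included among the generators of $\McW_\ETT$ (second clause of \Cref{def:w-ett}) precisely for this purpose. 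Your proposal never uses $\eta$ or \Cref{lem:1}, and without transport along homotopies the well-definedness of $[f]^*[\Gamma]$ and $[f].[\Gamma]$ cannot be established.

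A secondary, smaller gap: in the uniqueness half of the pullback verification you appeal to \Cref{lem:grading-proj}(\ref{itm:uip}), but that item only says that two parallel maps \emph{lying in $\McW_\ETT$} are homotopic, whereas the two mediating maps $\ell, \ell'$ are arbitrary. The paper instead rectifies $\ell'$ by homotopy lifting to a map $\widetilde{\ell'}$ with $\pi\widetilde{\ell'} = w_\Delta g$ strictly, applies $\MsJ$-elimination to the remaining homotopy $f.\Gamma \cdot \widetilde{\ell'} \simeq \widetilde{h}$, and uses uniqueness of strict factorisations through the pullback in $\bC$ to get $\widetilde{\ell'} \simeq \ell$, hence $[\ell'] = [\ell]$. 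The rest of your outline---the category structure via \Cref{lem:grading-proj}(\ref{itm:uip}) and \Cref{lem:approx-char}, grading and projections via \Cref{lem:grading-proj}(\ref{itm:grading}) and (\ref{itm:proj}), the existence half of the universal property by rectifying a homotopy-commuting cone through homotopy lifting, and reading off that $[-]$ is a contextual functor from the construction---does track the paper's \Cref{lem:free-ett-cat,lem:free-ett-basic-tt,lem:free-ett-subst-pullback}.
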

The proof of this result depends on the following series of lemmas.

\begin{lemma}\label{lem:free-ett-cat}
  $\vbr{\bC}_\ETT$ is a category and the assignment
  $[-] \colon \bC \to \vbr{\bC}_\ETT$ of objects and arrows into their
  equivalence classes is a full and surjective-on-objects functor
  $[-] \colon \bC \to \vbr{\bC}_\ETT$ that sends all maps in $\McW_\ETT$ to
  identities.
\end{lemma}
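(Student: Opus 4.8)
The plan is to verify, in order, that the composition of \Cref{def:free-ett} is well-defined, that it is associative and unital, that $[-]$ is a functor, and finally that $[-]$ is full, surjective on objects, and collapses $\McW_\ETT$. First I would record that the composite $[f]\cdot[g]$ is defined at all: if $[g]\colon[\Gamma]\to[\Delta]$ and $[f]\colon[\Delta]\to[\Theta]$, then any representatives satisfy $\cod g \sim \dom f$, so a connecting map $w\colon \cod g \simeq \dom f$ in $\McW_\ETT$ exists by the very definition of $\sim$.

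The main obstacle is well-definedness, which splits into independence of the connecting map and independence of the chosen representatives. For the former, if $w,w'\colon \cod g \simeq \dom f$ both lie in $\McW_\ETT$, then $w\simeq w'$ by \Cref{lem:grading-proj}(\ref{itm:uip}), whence $fwg\simeq fw'g$ and so $[fwg]=[fw'g]$, since homotopic maps are $\approx$-related (take both vertical legs in \Cref{def:w-ett-maps} to be identities). For the latter, suppose $f\approx f'$ and $g\approx g'$, witnessed by squares with legs $w_\Delta,w_\Theta$ and $w_\Gamma,w_{E}$ in $\McW_\ETT$ respectively. Choosing connecting maps $w,w'$ for the two composites, the two equivalences $w_\Delta w$ and $w' w_{E}$ from $\cod g$ to $\dom f'$ both lie in $\McW_\ETT$, hence are homotopic by \Cref{lem:grading-proj}(\ref{itm:uip}); pasting the witnessing homotopies for $f\approx f'$ and $g\approx g'$ around this identification yields $w_\Theta\cdot fwg \simeq f'w'g'\cdot w_\Gamma$, i.e. $fwg\approx f'w'g'$. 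This computation is the crux, and it is exactly where \UIP{} is essential.

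Associativity and the unit laws are then routine: both iterated composites $([f][g])[h]$ and $[f]([g][h])$ unfold to zig-zags $fw_1gw_2h$ whose two choices of connecting maps agree up to homotopy by \Cref{lem:grading-proj}(\ref{itm:uip}), and the unit laws follow since the inserted connecting map is $\approx$-equivalent to an identity. Functoriality of $[-]$ is then immediate: preservation of identities holds by definition, and for composable $f,g$ in $\bC$ one may take the connecting map to be $\id$, giving $[f]\cdot[g]=[f\,\id\,g]=[fg]$.

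Finally, $[-]$ is surjective on objects by construction. For fullness, a morphism $[\Gamma]\to[\Delta]$ is represented by some $f\colon\Gamma'\to\Delta'$ with $\Gamma'\sim\Gamma$ and $\Delta'\sim\Delta$; pre- and post-composing with connecting equivalences $w_\Gamma\colon\Gamma\simeq\Gamma'$ and $w_\Delta\colon\Delta'\simeq\Delta$ in $\McW_\ETT$ produces $\tilde f = w_\Delta f w_\Gamma\colon\Gamma\to\Delta$ with $[\tilde f]=[f]$, the latter equality witnessed by the square with vertical legs the homotopy inverse of $w_\Gamma$ and $w_\Delta$ (cf.~\Cref{lem:approx-char}). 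That $[-]$ sends every $w\in\McW_\ETT$ to an identity is the observation following \Cref{def:w-ett-maps}: such $w$ satisfies $\dom w\sim\cod w$ and $w\approx\id$, so $[w]=\id_{[\dom w]}$.
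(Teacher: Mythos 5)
Your proof is correct and follows essentially the same route as the paper: well-definedness of composition is reduced, via \Cref{lem:grading-proj}(\ref{itm:uip}), to the homotopy-commutativity of the middle square in the pasted comparison diagram, functoriality follows by choosing the identity as the connecting map, and $[w]=\id$ for $w\in\McW_\ETT$ follows from $w\approx\id$. You spell out some steps the paper leaves implicit (independence of the connecting map, associativity/unit laws, fullness and surjectivity on objects), but the underlying argument is the same.
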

\begin{proof}
  Composition is well-defined because for all
  $[\Gamma \xrightarrow{f} \Delta_1] = [\Gamma' \xrightarrow{f'} \Delta_1']$ and
  $[\Delta_2 \xrightarrow{g} Z] = [\Delta_2' \xrightarrow{g'} Z']$ with
  $[\Delta_1] = [\Delta_2]$ we have the following diagram in $\Ho\bC$, where all
  equivalences are in $\McW_\ETT$ and the middle square commutes by
  \Cref{prop:w-ett-thin-subgpd}.
  \begin{equation*}
    \begin{tikzcd}
      \Gamma \ar{r}{f} \ar{d}[swap]{\simeq} & \Delta_1 \ar{r}{\simeq} \ar{d}[description]{\simeq}
      & \Delta_2 \ar{r}{g} \ar{d}[description]{\simeq} & Z \ar{d}{\simeq} \\
      \Gamma' \ar{r}[swap]{f'} & \Delta_1' \ar{r}[swap]{\simeq} & \Delta_2' \ar{r}[swap]{g'} & Z'
    \end{tikzcd}
  \end{equation*}
  Thus associativity and identity follow from those of $\bC$.
  And $[-] \colon \bC \to \vbr{\bC}_\ETT$ is functorial because
  $[\id_\Gamma] = \id_{\Gamma}$ and
  $[fg] = [f \cdot \id \cdot g] = [f] \cdot [g]$ because
  $\id \in \McW_\ETT$.
  Finally, if $w \in \McW_\ETT$ then $w \approx \id$ and so
  $[w] = [\id] = \id$.
\end{proof}

\begin{lemma}\label{lem:free-ett-basic-tt}
  $\vbr{\bC}_\ETT$ inherits the terminal object, the grading of
  objects and the projection maps from $\bC$ by passing into the
  quotient $[-] \colon \bC \to \vbr{\bC}_\ETT$.
\end{lemma}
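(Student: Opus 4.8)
The plan is to verify that each ingredient of the contextual structure claimed here—the grading, the father function together with its projections, and the terminal object—descends along the quotient $[-]$, in every case reducing the required well-definedness to a statement already established in \Cref{lem:grading-proj} or \Cref{lem:approx-char}.

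First I would handle the grading. The assignment sending $\Gamma \in \ob_n\bC$ to its length $n$ is constant on $\sim$-classes by \Cref{lem:grading-proj}(\ref{itm:grading}), which says that $\McW_\ETT$-equivalent contexts share the same length. Hence the decomposition $\ob\vbr{\bC}_\ETT = \coprod_{n} \{ [\Gamma] \mid \Gamma \in \ob_n\bC \}$ is a genuine coproduct, placing each class in a unique grade.

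Next, for the father function and projections, I would check that $\ft[\Gamma] \coloneqq [\ft\Gamma]$ and $\pi_{[\Gamma]} \coloneqq [\pi_\Gamma]$ are independent of the chosen representative. Given a witness $w_\Gamma \colon \Gamma_1 \simeq \Gamma_2 \in \McW_\ETT$, \Cref{lem:grading-proj}(\ref{itm:proj}) supplies $w_{\ft\Gamma} \colon \ft\Gamma_1 \simeq \ft\Gamma_2 \in \McW_\ETT$, so $[\ft\Gamma_1] = [\ft\Gamma_2]$ and $\ft$ descends. The same lemma asserts $(\pi,\pi) \colon w_\Gamma \to w_{\ft\Gamma} \in \bC^\to$, i.e.\ the evident naturality square commutes; combined with \Cref{lem:approx-char} this yields $\pi_{\Gamma_1} \approx \pi_{\Gamma_2}$, so $\pi_{[\Gamma]}$ is well-defined with codomain $\ft[\Gamma]$ as required.

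Finally, for the terminal object, I would first note that $[1]$ is the unique grade-$0$ class, since $1$ is the unique object of $\ob_0\bC$ and $\sim$ preserves grade. Existence of a map $[\Gamma] \to [1]$ is immediate from terminality of $1$ in $\bC$. For uniqueness, given $[f],[g] \colon [\Gamma] \to [1]$ represented by $f \colon \Gamma_1 \to 1$ and $g \colon \Gamma_2 \to 1$ with a chosen $w_\Gamma \colon \Gamma_1 \simeq \Gamma_2 \in \McW_\ETT$, the square with verticals $w_\Gamma$ and $\id_1 \in \McW_\ETT$ commutes strictly because $1$ is terminal in $\bC$, so \Cref{lem:approx-char} delivers $f \approx g$, whence $[f] = [g]$. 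I do not anticipate any genuine obstacle: every clause is bookkeeping over \Cref{lem:grading-proj}, and the only point meriting care is that terminality of $[1]$ must be read through the homotopy-commuting characterisation of $\approx$ in \Cref{lem:approx-char} rather than through strict equality of morphisms—although in this particular instance strict commutativity happens to hold regardless.
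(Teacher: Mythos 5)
Your proof is correct and follows essentially the same route as the paper's: the grading descends via \Cref{lem:grading-proj}(\ref{itm:grading}), the father function and projections via \Cref{lem:grading-proj}(\ref{itm:proj}), and terminality of $[1]$ via the uniqueness of the grade-$0$ object together with \Cref{lem:approx-char}. Your terminality argument is simply a spelled-out version of the paper's one-line appeal to \Cref{lem:approx-char}, with the strict commutativity of the comparison square (forced by terminality of $1$ in $\bC$) doing the work.
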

\begin{proof}
  By \Cref{lem:w-ett-grading-proj}\ref{itm:grading}, we have
  $\ob{\vbr{\bC}_\ETT} = (\ob\bC)/{\sim} = (\coprod_n \ob_n\bC)/{\sim}
  = \coprod_n \ob_n\bC/{\sim}$, so the grading of objects passes into
  the quotient.
  Because the terminal object $1 \in \bC$ is the unique object in
  $\ob_0\bC$, it follows that $[1]$ is the unique object in
  $\ob_0{\vbr{\bC}_\ETT}$.
  By \Cref{lem:w-ett-grading-proj}\ref{itm:proj}, putting
  $\ft[\Gamma] \coloneqq [\ft\Gamma]$ and
  $\pi_{[\Gamma]} \coloneqq [\pi_\Gamma]$ is well-defined.
  Terminality of $[1]$ follows from \Cref{lem:approx-char} and the
  fact that all objects in $\bC$ are fibrant.
\end{proof}

\begin{lemma}\label{lem:free-ett-subst}
  For each $[\Gamma]$ and $[f] \colon [\Delta] \to \ft[\Gamma]$,
  the definitions of $[f]^*[\Gamma]$ and
  $[f].[\Gamma]$ from \Cref{def:free-ett} is well-defined.
  Moreover, we have in $\vbr{\bC}_\ETT$ that
  \begin{equation*}\label{diag:subst}\tag{\textsc{subst}}
    \begin{tikzcd}[]
      {[f]^*[\Gamma]} \ar{d}[swap]{\pi} \ar{r}{{[f].[\Gamma]}}
      & {[\Gamma]} \ar{d}{\pi} \\
      {[\Delta]} \ar{r}[swap]{{[f]}} & {\ft[\Gamma]}
    \end{tikzcd}
  \end{equation*}
\end{lemma}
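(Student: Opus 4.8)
The plan is to establish two things: first, that the definitions of $[f]^*[\Gamma]$ and $[f].[\Gamma]$ do not depend on the choices of representatives $\Delta_0$, $f$, $w_\Gamma$, $w_\Delta$ involved in \Cref{def:free-ett}; and second, that the resulting square (\ref{diag:subst}) commutes in $\vbr{\bC}_\ETT$ and behaves correctly with respect to the projections. The central observation powering both is \Cref{lem:approx-char}: two maps in $\bC$ have equal image under $[-]$ precisely when they fit into a square whose vertical edges lie in $\McW_\ETT$ and which commutes up to homotopy, and \Cref{lem:grading-proj}(\ref{itm:uip}) guarantees that any two such filling homotopies agree, so no coherence data needs to be tracked.

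First I would address well-definedness. Given two sets of choices producing composite maps $w_\Gamma f w_\Delta$ and $w_\Gamma' f' w_\Delta'$ (each a map from some representative of $[\Delta]$ to some representative of $\ft\Gamma = \ft{\Gamma'}$ where $[\Gamma]=[\Gamma']$), the hypotheses $[f]=[f']$ and $[\Delta_0]=[\Delta_0']$ together with \Cref{lem:grading-proj}(\ref{itm:proj}) produce maps in $\McW_\ETT$ connecting the sources and targets, and \Cref{lem:approx-char} shows the two composites are $\approx$-related. I would then invoke \Cref{lem:glue}: since the connecting map $\gamma$ of \Cref{def:glue} is built from $\McW_\ETT$-equivalences and is itself forced into $\McW_\ETT$ by \Cref{def:w-ett}, pulling $[\Gamma]$ back along $\approx$-equivalent maps yields objects related by an $\McW_\ETT$-equivalence, hence equal in $(\ob\bC)/{\sim}$; the induced map on connecting maps is correspondingly $\approx$-related. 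This shows $[w_\Delta^* f^* w_\Gamma^*\Gamma]$ and $[(w_\Delta f w_\Gamma).\Gamma]$ are independent of all choices.

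With well-definedness in hand, commutativity of (\ref{diag:subst}) and the identity $\ft{[f]^*[\Gamma]} = [\Delta]$ follow by applying $[-]$ to the corresponding substitution pullback square in $\bC$ (using that $[-]$ is a functor sending $\McW_\ETT$ to identities, by \Cref{lem:free-ett-cat}) and compatibility of projections, which was recorded after \Cref{def:w-ett-maps}. The main obstacle I anticipate is the bookkeeping in the well-definedness step: one must carefully assemble, from the separate $\McW_\ETT$-equivalences witnessing $[\Delta_0]=[\Delta_0']$, $[f]=[f']$, and $[\Gamma]=[\Gamma']$, a single commuting-up-to-homotopy square to which \Cref{lem:approx-char} applies, and verify that the equivalence $\gamma$ relating the two pullback objects genuinely lands in $\McW_\ETT$ rather than merely in $\McW$. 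Everything downstream is then formal consequence of functoriality of $[-]$ and the gluing lemma.
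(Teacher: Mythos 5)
There is a genuine gap in the central well-definedness step: you try to handle independence of the choice of representative $f$ with the gluing construction, but that tool does not apply there. The map $\gamma(w_\Gamma, w_{\ft{\Gamma}}, w_\Delta; f_1, f_2)$ of \Cref{def:glue} is only defined when $(w_\Delta, w_{\ft{\Gamma}}) \colon f_1 \to f_2$ is a map in the arrow category $\bC^\to$, i.e.\ when the square $w_{\ft{\Gamma}} \cdot f_1 = f_2 \cdot w_\Delta$ commutes \emph{strictly}. However, after reducing the two sets of choices to the composites $w_\Gamma f w_\Delta$ and $w_\Gamma' f' w_\Delta' \colon \Delta \to \ft{\Gamma}$, all that \Cref{lem:approx-char} gives you is that these two maps are \emph{homotopic}; they share source and target, so the only candidate legs for $\gamma$ are identities, which would require the two composites to be equal on the nose. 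Neither \Cref{def:glue} nor \Cref{lem:glue} produces an equivalence between pullbacks along two merely homotopic maps. The missing ingredient is precisely \Cref{def:1} and \Cref{lem:1}: the maps $\eta(f_1,f_2,H)$ obtained by $\MsJ$-elimination (transport along the homotopy $H$), which relate $\Delta.f_1^*A$ and $\Delta.f_2^*A$ for homotopic $f_1 \simeq f_2$, and which are included as generators of $\McW_\ETT$ in \Cref{def:w-ett} exactly for this purpose. This is how the paper concludes $\eta \colon (w_\Gamma f w_\Delta)^*\Gamma \simeq (w_\Gamma' f' w_\Delta')^*\Gamma \in \McW_\ETT$ together with $(\eta,\id)$ relating the connecting maps; transport is a type-theoretic operation and cannot be recovered from the purely categorical gluing lemma.

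Where gluing \emph{does} enter is the part your proposal passes over quickly: independence of the choices of $w_\Gamma$ and $w_\Delta$. There the relevant squares (the canonical substitution pullbacks) commute strictly, so the gluing lemma applies; this is packaged in the paper as \Cref{lem:free-ett-subst-id}, showing that $w.\Gamma \in \McW_\ETT$ whenever $w \in \McW_\ETT$, whence the pullback stages along $w_\Delta$ and $w_\Gamma$ collapse to identities in the quotient. Your final step --- obtaining the square (\textsc{subst}) and $\ft{[f]^*[\Gamma]} = [\Delta]$ by applying the functor $[-]$ to the pullback square in $\bC$ --- agrees with the paper and is fine. So the overall architecture of your argument is right, but without the $\eta$-maps the proof of well-definedness in the representative $f$ does not go through.
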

The proof will be preceded by the following lemma.
  \begin{lemma}\label{lem:free-ett-subst-id}
    If $w \colon \Theta \tosimeq \ft{\Gamma} \in \McW_\ETT$ then
    $w.\Gamma \colon w^*\Gamma \tosimeq \Gamma \in \McW_\ETT$ and so
    $[w^*\Gamma] = [\Gamma] \in \vbr{\bC}_\ETT$.
  \end{lemma}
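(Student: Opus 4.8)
The plan is to recognise the connecting map $w.\Gamma \colon w^*\Gamma \to \Gamma$ as a degenerate instance of the gluing map $\gamma$ from \Cref{def:glue}, in which all but one of the equivalence inputs is an identity, and then to invoke the corresponding closure clause of \Cref{def:w-ett} directly.

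First I would set up the relevant instance of \Cref{def:glue}. Take $\Gamma_1 = \Gamma_2 = \Gamma$ with $w_\Gamma = \id_\Gamma$, take $\ft{\Gamma_1} = \ft{\Gamma_2} = \ft{\Gamma}$ with $w_{\ft{\Gamma}} = \id_{\ft{\Gamma}}$, and take $\Delta_1 = \Theta$, $\Delta_2 = \ft{\Gamma}$ with $w_\Delta = w$. For the two base maps I would set $f_1 = w \colon \Theta \to \ft{\Gamma}$ and $f_2 = \id_{\ft{\Gamma}}$. The required compatibility $(w_\Delta, w_{\ft{\Gamma}}) \colon f_1 \to f_2 \in \bC^\to$ amounts to $w_{\ft{\Gamma}} \cdot f_1 = f_2 \cdot w_\Delta$, which here reads $\id \cdot w = \id \cdot w$ and so holds trivially.

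With these choices $f_1^*\Gamma_1 = w^*\Gamma$ while $f_2^*\Gamma_2 = \id^*\Gamma = \Gamma$, the latter pullback being $\Gamma$ itself with connecting map $\id.\Gamma = \id_\Gamma$. Unwinding the formula $\gamma = (w_\Delta\pi, w_\Gamma \cdot f_1.\Gamma_1)$ and using that the target pullback is trivial, so that the map into it is determined by its component into $\Gamma$, I would check that $\gamma(\id_\Gamma, \id_{\ft{\Gamma}}, w; w, \id_{\ft{\Gamma}})$ is precisely $w.\Gamma$: indeed $f_2.\Gamma_2 = \id_\Gamma$ forces $\gamma = w_\Gamma \cdot f_1.\Gamma_1 = w.\Gamma$. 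Since $\id_\Gamma$ and $\id_{\ft{\Gamma}}$ lie in $\McW_\ETT$ because it is wide, and $w \in \McW_\ETT$ by hypothesis, the gluing clause of \Cref{def:w-ett} yields $w.\Gamma \in \McW_\ETT$.

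Finally, $w.\Gamma \colon w^*\Gamma \simeq \Gamma$ being a member of $\McW_\ETT$ exhibits $w^*\Gamma \sim \Gamma$ under the equivalence relation on objects induced by \Cref{lem:grading-proj}(\ref{itm:grading}), whence $[w^*\Gamma] = [\Gamma]$ in $\vbr{\bC}_\ETT$. The only point requiring care, and the one I expect to be the main obstacle, is the identification of $\gamma$ with $w.\Gamma$: one must confirm that the universal property defining $\gamma$ collapses, when $f_2 = \id$ and $w_\Gamma = \id$, to exactly the connecting map. Beyond this bookkeeping no substantive difficulty arises, the argument being a single application of the explicit generator of $\McW_\ETT$.
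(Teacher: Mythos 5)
Your proposal is correct and is essentially identical to the paper's proof, which likewise exhibits $w.\Gamma$ as the gluing map of \Cref{def:glue} for the cube with $w_\Gamma = \id_\Gamma$, $w_{\ft\Gamma} = \id_{\ft\Gamma}$, $w_\Delta = w$, $f_1 = w$, $f_2 = \id$, so that membership in $\McW_\ETT$ follows from the gluing closure clause of \Cref{def:w-ett}. The only difference is presentational: the paper gives the cube diagram and cites \Cref{lem:glue} without comment, whereas you spell out the bookkeeping, including the key collapse $f_2^*\Gamma_2 = \id^*\Gamma = \Gamma$ and $\gamma = w.\Gamma$, which the paper leaves implicit.
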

  \begin{proof}
    Use \Cref{lem:glue} on
    \begin{equation*}
      \begin{tikzcd}[cramped, sep=small]
        & \Gamma \ar[equal]{rr} \ar[two heads]{dd}
        \ar[phantom]{rrdd}[pos=0em]{\lrcorner}
        & & \Gamma \\
        w^*\Gamma
        \ar[dashed]{ur}{w.\Gamma}[description, inner sep=1]{\simeq}
        \ar[crossing over]{rr}
        \ar[phantom]{rrdd}[pos=0em]{\lrcorner}
        \ar[two heads]{dd}
        & & \Gamma \ar[equal]{ur}[]{}  \\
        & \ft{\Gamma} \ar[equal]{rr}[]{}
        & & \ft{\Gamma} \ar[crossing over, twoheadleftarrow]{uu} \\
        \Theta \ar{rr}[swap]{w} \ar[]{ur}[description, inner sep=1]{w}
        & & \ft{\Gamma} \ar[crossing over, twoheadleftarrow]{uu} \ar[equal]{ur}[swap]{}
      \end{tikzcd}
    \end{equation*}
  \end{proof}
  \begin{proof}[Proof of \Cref{lem:free-ett-subst}]
  We see, by \Cref{lem:free-ett-subst-id}, that the canonical substitution pullbacks along any
  maps in $\McW_\ETT$ are always in $\McW_\ETT$.

  In particular, the above definition of $[f]^*[\Gamma]$ and
  $[f].[\Gamma]$ are independent of the choice of $w_\Gamma$ and
  $w_\Delta$ because the left and right squares below become the
  identities in $\vbr{\bC}_\ETT$:
  \begin{equation*}
    \begin{tikzcd}
      w_\Delta^*f^*w_\Gamma^*\Gamma \ar[two heads]{d}
      \ar{r}{w_\Delta.f^*w_\Gamma^*\Gamma}
      \ar[phantom]{rd}[pos=0]{\lrcorner}
      & f^*w_\Gamma^*\Gamma \ar[two heads]{d}
      \ar{r}{f.w_\Gamma^*\Gamma}
      \ar[phantom]{rd}[pos=0]{\lrcorner}
      & w_\Gamma^*\Gamma \ar[two heads]{d}
      \ar{r}{w_\Gamma.\Gamma}
      \ar[phantom]{rd}[pos=0]{\lrcorner}
      & \Gamma \ar[two heads]{d} \\
      \Delta \ar{r}{\simeq}[swap]{w_\Delta} & \Delta_0 \ar{r}[swap]{f} & \Theta \ar{r}{\simeq}[swap]{w_\Gamma} & \ft\Gamma
    \end{tikzcd}
  \end{equation*}

  We next check the construction is independent of the choice of $f$.
  Let $f' \colon \Delta_0' \to \Theta'$ be another representative of
  $[f] \colon [\Delta] \to \ft[\Gamma]$.
  Take $w_\Gamma' \colon \Theta' \tosimeq \ft{\Gamma}$ and
  $w_\Delta' \colon \Delta \tosimeq \Delta_0'$ any two maps in $\McW_\ETT$.
  By \Cref{lem:approx-char}, we have that
  $w_\Gamma f w_\Delta \simeq w_\Gamma' f' w_\Delta' \colon \Delta \to
  \ft{\Gamma}$.
  \Cref{def:glue} gives
  $\eta \colon (w_\Gamma f w_\Delta)^*\Gamma \tosimeq (w_\Gamma' f'
  w_\Delta')^*\Gamma \in \McW_\ETT$ such that
  $(\eta,\id) \colon (w_\Gamma f w_\Delta).\Gamma \to (w_\Gamma' f'
  w_\Delta').\Gamma \in \bC^\to$.
  This shows that the construction is independent of the choice of $f$
  as well.

  Finally, \Cref{diag:subst} is easily verified by direct computation
  using the property of the connecting map in $\bC$ and functoriality
  of $[-]$.
\end{proof}

\begin{lemma}\label{lem:free-ett-subst-pullback}
  The square \Cref{diag:subst} of \Cref{lem:free-ett-subst} is in fact a
  pullback.
  Hence, $\vbr{\bC}_\ETT$ inherits strictly functorial substitution
  from $\bC$.
\end{lemma}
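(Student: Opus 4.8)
The plan is to reduce the statement to the assertion that the quotient functor $[-] \colon \bC \to \vbr{\bC}_\ETT$ carries the canonical substitution pullback of $\bC$ to a pullback in $\vbr{\bC}_\ETT$. By the independence-of-choices established in \Cref{lem:free-ett-subst} (together with \Cref{lem:free-ett-subst-id} and \Cref{lem:1}), I may represent $[f]$ by a genuine map $f \colon \Delta \to \ft\Gamma$ of $\bC$, with $\Delta$ and $\ft\Gamma$ chosen representatives, so that $[f]^*[\Gamma] = [f^*\Gamma]$ and $[f].[\Gamma] = [f.\Gamma]$. Under this choice the square \Cref{diag:subst} is exactly the image under $[-]$ of the canonical pullback square of $\bC$ witnessing $f^*\Gamma$, and it remains to verify its universal property in $\vbr{\bC}_\ETT$.

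For \emph{existence}, suppose given a cone $[g] \colon [X] \to [\Delta]$ and $[h] \colon [X] \to [\Gamma]$ with $[\pi]\,[h] = [f]\,[g]$. Fixing a representative $X$ and composing on either side with maps in $\McW_\ETT$ (which become identities in $\vbr{\bC}_\ETT$), I may take $g \colon X \to \Delta$ and $h \colon X \to \Gamma$ to be honest maps of $\bC$ out of the common vertex $X$. The cone condition then reads $\pi h \approx f g$, so \Cref{lem:approx-char}, applied with identity legs, yields a homotopy $\pi h \simeq f g$ in $\bC$. Since $\pi \colon \Gamma \twoheadrightarrow \ft\Gamma$ is a fibration, I lift this homotopy (equivalently, transport $h$ as in \Cref{lem:1}) to obtain $h'' \colon X \to \Gamma$ with $\pi h'' = f g$ strictly and $h'' \simeq h$. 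The strict pullback property of $f^*\Gamma$ in $\bC$ now produces $k \colon X \to f^*\Gamma$ with $\pi k = g$ and $(f.\Gamma)\,k = h''$; its class $[k]$ is the required mediating map, since $[\pi]\,[k] = [g]$ on the nose and $[f.\Gamma]\,[k] = [h''] = [h]$ because $h'' \simeq h$.

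For \emph{uniqueness}, let $[k']$ be any mediator, represented by a genuine $k' \colon X \to f^*\Gamma$. The two triangle identities give $\pi k' \approx g \approx \pi k$ and $(f.\Gamma)\,k' \approx h \approx (f.\Gamma)\,k$, hence by \Cref{lem:approx-char} both $\pi k' \simeq \pi k$ and $(f.\Gamma)\,k' \simeq (f.\Gamma)\,k$. Because $f^*\Gamma$ is a strict pullback along the fibration $\pi$, a map into it is determined up to homotopy by compatible homotopies of its two components; the requisite compatibility of these two homotopies is automatic here, as homotopies are themselves unique up to homotopy by $\UIP$, via \Cref{lem:grading-proj}(\ref{itm:uip}). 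I therefore conclude $k' \simeq k$, whence $[k'] = [k]$. This coherence step is the main obstacle, and it is precisely where the $\UIP$-structure on $\bC$ is indispensable.

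Finally, strict functoriality of substitution descends directly. With the clean representatives above, $[f]^*[\Gamma] = [f^*\Gamma]$ and $[f].[\Gamma] = [f.\Gamma]$ are literally the images under $[-]$ of the strictly functorial substitution data of $\bC$; and the independence-of-choices from \Cref{lem:free-ett-subst} lets me use, for a composite, representatives that compose on the nose. Thus the identities $\id^*[\Gamma] = [\Gamma]$, $\id.[\Gamma] = \id$, and $([f]\,[g])^*[\Gamma] = [g]^*[f]^*[\Gamma]$, together with the corresponding equations for the connecting maps, follow at once from their strict counterparts in $\bC$ by applying the functor $[-]$.
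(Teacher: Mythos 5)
Your proposal is correct and follows essentially the same route as the paper's proof: normalize representatives using maps of $\McW_\ETT$, convert the cone condition in the quotient into a genuine homotopy via \Cref{lem:approx-char}, strictify by homotopy lifting along the fibration $\pi$, and then invoke the strict pullback property in $\bC$; for uniqueness, both arguments ultimately reduce to $\UIP$, the paper via $\MsJ$-elimination on the homotopy $f.\Gamma \cdot \widetilde{\ell'} \simeq \widetilde{h}$ followed by strict uniqueness of the mediator, and you via the (equivalent) homotopy-pullback property of strict pullbacks of fibrations with $\UIP$ supplying the coherence of the two component homotopies. The only differences are cosmetic: you package the final coherence step as a standard fibration-category fact rather than the paper's explicit $\MsJ$-elimination, and you spell out the descent of strict functoriality, which the paper leaves implicit.
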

\begin{proof}
  We check the universal property.
  Pick a representative $f \colon \Delta \to \ft\Gamma$.
  Let there be $\Theta_1 \xrightarrow{g} \Delta_1$ and
  $\Theta_2 \xrightarrow{h} \Gamma_2$ such that there exists
  $w_\Theta \colon \Theta_1 \tosimeq \Theta_2 \in \McW_\ETT$ and
  $w_\Gamma \colon \Gamma_2 \tosimeq \Gamma \in \McW_\ETT$ and
  $w_\Delta \colon \Delta_1 \tosimeq \Delta \in \McW_\ETT$.
  Further assume that
  \begin{equation*}
    \begin{tikzcd}[]
      {[\Theta_1]} \ar{d}[swap]{{[g]}} \ar{r}{{[h]}} & {[\Gamma]} \ar{d}{{[\pi]}} \\
      {[\Delta]} \ar{r}[swap]{{[f]}} & {\ft[\Gamma]}
    \end{tikzcd}
    \in \vbr{\bC}_\ETT
  \end{equation*}
  Then, by definition,
  $[f] [g] = [\Theta_1 \xrightarrow{g} \Delta_1 \xrightarrow{w_\Delta}
  \Delta \xrightarrow{f} \ft\Gamma]$ and
  $[\pi] [h] = [\Theta_2 \xrightarrow{h} \Gamma_2 \xrightarrow{w_\Gamma}
  \Gamma \xrightarrow{\pi} \ft\Gamma]$.
  By the diagram above and \Cref{lem:approx-char},
  $\pi w_\Gamma h w_\Theta \simeq f w_\Delta g$.
  Using the homotopy lifting property, we can find
  $\widetilde{h} \simeq w_\Gamma h w_\Theta \colon \Theta_1 \to \Gamma$ such that
  $\pi \widetilde{h} = f w_\Delta g$.
  Hence, in $\bC$, there is a unique factorisation
  $\Theta_1 \xrightarrow{\ell} f^*\Gamma$ such that
  \begin{equation*}
    \begin{tikzcd}
      \Theta_1 \ar[bend right=10]{dd}[swap]{g} \ar{r}{w_\Theta}
      \ar[bend left=10]{rrdd}[description]{\widetilde{h}}
      \ar[phantom, bend left=35]{rrdd}{\simeq}
      \ar[dashed]{rdd}[description]{\exists!\ell}
      & \Theta_2 \ar[bend left]{rd}{h} \\
              &           & \Gamma_2 \ar{d}{w_\Gamma} \\
     \Delta_1 \ar[bend right]{rd}[swap]{w_\Delta}
     & f^*\Gamma \ar{r}[description]{f.\Gamma} \ar[two heads]{d}
     \ar[phantom]{rd}[pos=0]{\lrcorner}
     & \Gamma \ar[two heads]{d} \\
              & \Delta \ar{r}[swap]{f}    & \ft\Gamma
    \end{tikzcd}
    \in \bC
  \end{equation*}
  Note that we have $[\pi] [\ell] = [\pi \ell] = [w_\Delta g] = [g]$ and
  $[f].[\Gamma] \cdot [\ell] = [f.\Gamma][\ell] = [f.\Gamma \cdot
  \ell] = [\widetilde{h}] = [w_\Gamma h w_\Theta] = [h]$.
  Thus, $[\ell]$ does provide a required factorisation in
  $\vbr{\bC}_\ETT$.

  It remains to check uniqueness.
  Suppose there is another map $\Theta' \xrightarrow{\ell'} Y$ where there
  is $w_\Theta' \colon \Theta_1 \tosimeq \Theta' \in \McW_\ETT$ and
  $w_Y \colon Y \tosimeq f^*\Gamma \in \McW_\ETT$ such that $[\ell']$ is also
  a factorisation where $[\pi] [\ell'] = [g]$ and
  $[f].[\Gamma] \cdot [\ell'] = [h]$.
  Because
  $[\Theta_1 \xrightarrow{g} \Delta_1] = [\pi][\ell'] = [\Theta'
  \xrightarrow{\ell'} Y \xrightarrow{w_Y} f^*\Gamma \xrightarrow{\pi}
  \Delta]$, it follows again by \Cref{lem:approx-char} that
  $\pi w_Y \ell'w_\Theta' \simeq w_\Delta g$.
  Let $\widetilde{\ell'}$ be homotopic to
  $w_Y \ell' w_\Theta'$ such that $\pi \widetilde{\ell'} = w_\Delta g$.
  Then, $[\widetilde{\ell'}] = [\ell']$ .
  Therefore,
  $[\Theta_2 \xrightarrow{h} \Gamma_2] = [f].[\Gamma] \cdot [\ell'] =
  [f.\Gamma][\widetilde{\ell'}] = [\Theta_1 \xrightarrow{\widetilde{\ell'}}
  f^*\Gamma \xrightarrow{f.\Gamma} \Gamma]$.
  Again by \Cref{lem:approx-char}, we see that
  $\widetilde{h} \simeq w_\Gamma h w_\Theta \simeq f.\Gamma \cdot
  \widetilde{\ell'}$.
  Hence, the situation is now:
  \begin{equation*}
    \begin{tikzcd}
      \Theta_1 \ar[bend right=10]{d}[swap]{g} \ar{rd}[description]{\widetilde{\ell'}}
      \ar[phantom, bend left=5]{rrd}{\simeq}
      \ar[bend left]{rrd}{\widetilde{h}} \\
      \Delta_1 \ar[bend right]{rd}[swap]{w_\Delta}
      & f^*\Gamma \ar[two heads]{d} \ar{r}[description]{f.\Gamma}
      \ar[phantom]{rd}[pos=0]{\lrcorner}
      & \Gamma \ar[two heads]{d} \\
               & \Delta \ar{r}[swap]{f}    & \ft\Gamma
    \end{tikzcd}
    \in \bC
  \end{equation*}
  By $\MsJ$-elimination on
  $f.\Gamma \cdot \widetilde{\ell'} \simeq \widetilde{h}$ and then
  using the uniqueness of $\ell$, it follows that
  $\widetilde{\ell'} \simeq \ell$, from which
  $[\ell'] = [\widetilde{\ell'}] = [\ell]$ follows.
\end{proof}

\begin{proof}[Proof of \Cref{prop:free-ett-cxl-cat}]
    Combine \Cref{lem:free-ett-basic-tt,lem:free-ett-subst-pullback}. From their proofs, we see $[-]$ is a map of contextual categories.
\end{proof}

\begin{lemma}\label{lem:strict-lifting}
  The map $\bC \xrightarrow{[-]} \vbr{\bC}_\ETT$ is a trivial fibration as in
  \Cref{def:ctx-triv-fib} for each $\McI^\CxlCat_{\ITT+\UIP}$-cellular
  $\bC \in \CxlCat_{\ITT+\UIP}$.
\end{lemma}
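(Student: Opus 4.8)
The plan is to verify directly the two conditions of \Cref{def:ctx-triv-fib} for the functor $[-] \colon \bC \to \vbr{\bC}_\ETT$, namely strict type lifting and strict term lifting. Throughout I would lean on the two facts that make the quotient tractable: that pulling back along a map of $\McW_\ETT$ stays within a single $\sim$-class (\Cref{lem:free-ett-subst-id}), and that $\approx$ may be tested against \emph{any} prescribed pair of equivalences in $\McW_\ETT$, in particular the identities (\Cref{lem:approx-char}).

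Strict type lifting is essentially immediate. Suppose $\Gamma \in \bC$ and $\Delta \in \vbr{\bC}_\ETT$ with $\ft{\Delta} = [\Gamma]$, and write $\Delta = [D]$ for a representative $D \in \bC$. Since $[\ft{D}] = \ft{\Delta} = [\Gamma]$, we have $\ft{D} \sim \Gamma$, hence a map $v \colon \Gamma \simeq \ft{D} \in \McW_\ETT$. I would then set $A \coloneqq v^*D$, a genuine extension of $\Gamma$ since $\ft{(v^*D)} = \Gamma$, and invoke \Cref{lem:free-ett-subst-id} (with its $\Gamma,\Theta,w$ instantiated to $D,\Gamma,v$) to conclude $[\Gamma.A] = [v^*D] = [D] = \Delta$, as required.

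Strict term lifting is the substantive part. Given $\Gamma$, a type $A \in \Ty\Gamma$, and a section $a \colon [\Gamma] \to [\Gamma.A]$ of $[\pi_A]$, fullness of $[-]$ (\Cref{lem:free-ett-cat}) lets me choose a representative $a_0 \colon \Gamma \to \Gamma.A$ in $\bC$ with $[a_0] = a$. Unwinding the section condition $[\pi_A]\cdot a = \id_{[\Gamma]}$ gives $\pi_A a_0 \approx \id_\Gamma$, and applying \Cref{lem:approx-char} with the identity equivalences on both copies of $\Gamma$ upgrades this to an honest homotopy $H \colon \pi_A a_0 \simeq \id_\Gamma$ in $\bC$. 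Thus $a_0$ is only a homotopy section of $\pi_A$, and what remains is to rectify it to a strict one without changing its class.

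For the rectification I would write $f \coloneqq \pi_A a_0$ and let $t$ be the term of $f^*A$ corresponding to $a_0$ under the pullback square defining $\Gamma.f^*A$, so that $a_0 = (f.A)\cdot t$ with $t$ a strict section of $\pi_{f^*A}$. The transport equivalence $\eta(f,\id,H)\colon \Gamma.f^*A \simeq \Gamma.A$ of \Cref{def:1} lies over $\id_\Gamma$, so $\overline{a} \coloneqq \eta(f,\id,H)\cdot t \colon \Gamma \to \Gamma.A$ is a \emph{strict} section of $\pi_A$. To see $[\overline{a}] = a$, I would argue by based $\MsJ$-elimination on $H$, reducing to the case $f = \id$, $H = \refl$, where both $\eta(f,\id,H)$ and the connecting map $f.A$ compute to the identity; this yields $\eta(f,\id,H) \simeq f.A$, whence $\overline{a} \simeq a_0$ and therefore $\overline{a} \approx a_0$ via identity vertical equivalences, giving $[\overline{a}] = [a_0] = a$. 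The main obstacle is precisely this last homotopy: one must know that the $\MsJ$-defined transport agrees up to homotopy with the substitution connecting map. It is convenient here that $\eta(f,\id,H)$ is itself a generator of $\McW_\ETT$ (\Cref{def:w-ett}), so that however one organizes the bookkeeping the correcting equivalence is collapsed by $[-]$.
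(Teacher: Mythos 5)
Your proof is correct, and its skeleton matches the paper's: strict type lifting is handled identically (choose a representative $D$, pull back along some $v \colon \Gamma \simeq \ft{D}$ in $\McW_\ETT$, and apply \Cref{lem:free-ett-subst-id}), and strict term lifting likewise begins by choosing a representative and using \Cref{lem:approx-char} to upgrade $[\pi][a]=[\id]$ to an honest homotopy $\pi a_0 \simeq \id$. Where you diverge is the rectification step. The paper disposes of it in one line by invoking the homotopy lifting property of the fibration $\pi$ in $\bC$: there is $\widehat{a}$ homotopic to the (corrected) representative with $\pi\widehat{a} = \id$, and $[\widehat{a}]=[a]$ is then immediate since homotopic maps are identified in the quotient. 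You instead construct the lift explicitly: factor $a_0$ through the pullback as $a_0 = (f.A)\cdot t$ with $t$ a strict section of $\pi_{f^*A}$, and compose $t$ with the transport equivalence $\eta(f,\id,H)$ of \Cref{def:1}. This amounts to proving, by hand, exactly the instance of homotopy lifting that the paper cites, so your argument is more self-contained but carries an extra obligation the paper's does not: the homotopy $\eta(f,\id,H) \simeq f.A$, which you need in order to conclude $[\overline{a}]=[a_0]$. That fact is true, but your phrasing ``based $\MsJ$-elimination on $H$, reducing to the case $H=\refl$'' should be understood as constructing a section of a family over the identity context (Garner's $\Gamma.\Gamma.\Id_\Gamma$, extended by $A$), checking it at reflexivity, and then instantiating at the specific $H$ --- one cannot literally induct on a particular homotopy. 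Since this is the same level of rigor the paper itself employs (e.g., in \Cref{lem:1} and \Cref{lem:free-ett-subst-pullback}), it is a stylistic looseness rather than a gap. Your use of fullness of $[-]$ (\Cref{lem:free-ett-cat}) to obtain a representative $\Gamma \to \Gamma.A$ directly, instead of the paper's arbitrary representative corrected by $w_1, w_2 \in \McW_\ETT$, is an inessential but harmless simplification.
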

\begin{proof}
  We first show strict type lifting.
  Let $\Gamma \in \bC$ and $[\Delta] \in \vbr{\bC}_\ETT$ be such that
  $[\Gamma] = \ft{[\Delta]} = [\ft \Delta]$.
  Then, there is
  $w_\Gamma \colon \Gamma \tosimeq \ft \Delta \in \McW_\ETT$.
  Hence, by \Cref{lem:free-ett-subst-id}, it follows that
  $w_\Gamma.\Delta \colon w_\Gamma^*\Delta \tosimeq \Delta \in \McW_\ETT$ with
  $\ft{w_\Gamma^*\Delta} = \Gamma$.
  So $w_\Gamma^*\Delta$ is a type over $\Gamma$ such that
  $[w_\Gamma^*\Delta] = [\Delta]$, as required by strict type lifting.

  For strict term lifting, let there be $\Gamma \in \bC$ and a type
  $\Gamma.A \in \bC$ over $\Gamma$.
  Take any section
  $[a] \colon [\Gamma] \to [\Gamma.A] \in \vbr{\bC}_\ETT$ so that
  there is a representative $a \colon \Delta_1 \to \Delta_2$ with
  $w_1 \colon \Delta_1 \tosimeq \Gamma \in \McW_\ETT$ and
  $w_2 \colon \Delta_2 \tosimeq \Gamma.A \in \McW_\ETT$.
  Then, by assumption, $[\pi][a] = [\id]$ and so we have
  $\pi \cdot w_2 \cdot a \cdot w_1 \tosimeq \id$.
  By homotopy lifting, we are done by taking a map $\widehat{a}$
  homotopic to $w_2 \cdot a \cdot w_1$ so that
  $\pi \widehat{a} = \id$ (i.e. a section).
\end{proof}


\section{Logical Structure on the Freely Generated Extensional Type
  Theory}\label{sec:c-ett-logic}
We next check that $\vbr{\bC}_\ETT$ inherits the respective logical
structures of $\bC$ by passing into the quotient.
They all proceed in the same manner.
Results in this section correspond to \cite[Proposition
3.2.17]{hofmann:extensional-constructs}.

\begin{definition}\label{def:c-ett-id}
  For each $[\Gamma] \in \vbr{\bC}_\ETT$ and $[A] \in \Ty{[\Gamma]}$,
  choose a representative $\Gamma$ of $[\Gamma]$ and, by
  \Cref{lem:strict-lifting}, a representative $A \in \Ty\Gamma$ of
  $[A]$.
  Define the following:
  \begin{itemize}
    \item \emph{$\Id$-type.} $\Id_{[A]} \coloneqq [\Id_A]$.
    \item \emph{$\refl$-constructor.}
    $\refl_{[A]} \coloneqq [\refl_A]$.
    \item \emph{$\MsJ$-eliminator.} For each
    $[C] \in \Ty{[\Gamma].[A].[A].\Id_{[A]}}$,
    choose, by \Cref{lem:strict-lifting}, a representative
    $C \in \Ty{\Gamma.A.A.\Id_{A}}$.
    Then, for each
    $[d] \colon [\Gamma].[A] \to [\Gamma].[A].[A].\Id_{[A]}.[C]$ where
    $[\pi][d] = [\refl_A]$, choose a representative
    $d \colon \Gamma.A \to \Gamma.A.A.\Id_A.C$ such that
    $\pi \cdot d = \refl_A$.
    Put $\MsJ_{[d]} \coloneqq [\MsJ_{d}]$.
  \end{itemize}
\end{definition}

\begin{lemma}\label{lem:c-ett-id}
  The structure given in \Cref{def:c-ett-id} is well-defined and gives rise to $\Id$-type
  structures in $\vbr{\bC}_\ETT$.
\end{lemma}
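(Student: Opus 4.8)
The plan is to verify, in turn, that each component of \Cref{def:c-ett-id} is independent of the chosen representatives, and then that the resulting data satisfies the formation, introduction, elimination, computation, and substitution-stability rules for $\Id$-types. The guiding principle throughout is that changing a representative amounts to composing with a map of $\McW_\ETT$, which $[-]$ sends to an identity; the stability lemmas of \Cref{sec:w-ett} supply exactly the $\McW_\ETT$-maps needed.

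First I would treat well-definedness of $\Id_{[A]}$ and $\refl_{[A]}$. Given two representatives $A_1, A_2 \in \Ty\Gamma$ of $[A]$ over a fixed representative $\Gamma$, the equality $[A_1]=[A_2]$ yields some $w_A \colon \Gamma.A_1 \simeq \Gamma.A_2 \in \McW_\ETT$, and \Cref{lem:id-id} provides $w_\Id(w_A) \colon \Gamma.A_1.A_1.\Id_{A_1} \simeq \Gamma.A_2.A_2.\Id_{A_2}$, which lies in $\McW_\ETT$ by \Cref{def:w-ett}. Hence $[\Id_{A_1}]=[\Id_{A_2}]$, and the commuting left square of \Cref{lem:id-id} (constructors to constructors) gives $[\refl_{A_1}]=[\refl_{A_2}]$. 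To pass from a fixed base $\Gamma$ to a different representative of $[\Gamma]$, I would first reduce along the witnessing $w_\Gamma \in \McW_\ETT$ using \Cref{lem:free-ett-subst-id}, which identifies $[w_\Gamma^* A']$ with $[A']$ in the quotient, thereby reducing the general case to the fixed-base case just handled.

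Next comes the eliminator $\MsJ_{[d]}$, which is the main obstacle, since both the motive $C$ and the method $d$ are chosen. For two representatives $d, d'$ of $[d]$ over a fixed motive, the equality $[d]=[d']$ together with \Cref{lem:grading-proj}(\ref{itm:uip}) forces $d \simeq d'$, and $\MsJ$-elimination on this homotopy yields $\MsJ_d \simeq \MsJ_{d'}$, whence $[\MsJ_d]=[\MsJ_{d'}]$ since $\McW_\ETT$ is closed under homotopy. For two representatives $C, C'$ of $[C]$, and for changes of the base representative, the substitution maps coincide only up to homotopy, and here I would invoke \Cref{lem:1}: the induced equivalence $\eta$ between pullbacks along homotopic maps lies in $\McW_\ETT$ and is compatible with the eliminators, so the resulting $[\MsJ_d]$ agree. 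The interplay of these choices, tracked coherently through the $\McW_\ETT$-maps supplied by \Cref{lem:1,lem:id-id} and collapsed by \Cref{lem:grading-proj}(\ref{itm:uip}), is where the bookkeeping is most delicate.

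Finally, the structural rules themselves are routine once well-definedness holds. Formation and introduction hold because $[-]$ preserves the contextual structure; the computation rule follows by applying $[-]$ to the equality $\MsJ_d \cdot \refl_A = d$ in $\bC$ and noting that composition of $[-]$-images along strictly matching objects is $[-]$ of the composite, giving $[\MsJ_d]\cdot[\refl_A]=[d]$; and stability under substitution follows from \Cref{lem:free-ett-subst-pullback}, namely that $[-]$ is a map of contextual categories preserving the canonical pullbacks, combined with the strict stability of the $\Id$-structure already present in $\bC$.
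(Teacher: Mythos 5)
Your proposal is correct and follows essentially the same route as the paper's proof: descend $\Id$ and $\refl$ by reducing to a common base via \Cref{lem:free-ett-subst-id} and then applying \Cref{lem:id-id}, get well-definedness of $\MsJ$ from uniqueness of representatives up to homotopy (the paper channels this through \Cref{lem:approx-char}, you cite its underlying ingredient \Cref{lem:grading-proj}(\ref{itm:uip})) followed by $\MsJ$-elimination on the resulting homotopy, and obtain computation and substitutivity from functoriality of the quotient map. The only step you leave implicit that the paper makes explicit is the production, via homotopy lifting, of a strict representative $\widetilde{d}$ with $\pi\widetilde{d} = \refl$ before forming $\MsJ_{\widetilde{d}}$, which is needed for \Cref{def:c-ett-id} to be non-vacuous.
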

\begin{proof}
  Assume $w_{A} \colon \Gamma_1.A_1 \tosimeq \Gamma_2.A_2 \in \McW_\ETT$ so that
  by \Cref{lem:w-ett-grading-proj}\ref{itm:proj}, there exists
  $w_\Gamma \colon \Gamma_1 \tosimeq \Gamma_2 \in \McW_\ETT$ such that
  $w_A \xrightarrow{(\pi,\pi)} w_\Gamma \in \bC^\to$.
  By \Cref{lem:free-ett-subst-id} and stability under substitution, putting
  $A_0 \coloneqq w_\Gamma^*A_2$, we have
  \begin{align*}
    w.A_2.A_2.\Id_{A_2} \colon \Gamma_1.A_0.A_0.\Id_{A_0} \tosimeq
    \Gamma_2.A_2.A_2.\Id_{A_2} \in \McW_\ETT
  \end{align*}
  Also by stability under pullbacks, for the constructor,
  $w_\Gamma^*\refl_2 = \refl_0$.
  So,
  $(w_\Gamma.A_2, w_\Gamma.A_2.A_2.\Id_{A_2}) \colon \refl_0 \to
  \refl_2 \in \bC^\to$ and hence $[\refl_0] = [\refl_2]$.

  From \Cref{lem:free-ett-subst-id}, it also follows that
  $w_\Gamma.A_2 \colon \Gamma_1.A_0 \tosimeq \Gamma_2.A_2 \in \McW_\ETT$ and
  so in combination with
  $w_{AB} \colon \Gamma_1.A_1 \tosimeq \Gamma_2.A_2 \in \McW_\ETT$
  there is
  $w_0 \colon \Gamma_1.A_0 \tosimeq \Gamma_1.A_1 \in \McW_\ETT$.
  This allows for \Cref{lem:id-id} to apply and gives
  \begin{align*}
    w_\Id \colon \Gamma_1.A_1.A_1.\Id_{A_1} \tosimeq
    \Gamma_1.A_0.A_0.\Id_{A_0} \in \McW_\ETT
  \end{align*}
  such that $\refl_0 \xrightarrow{(w_0,w_\Id)} \refl_1$.
  Hence,
  $\Gamma_1.\Id_{A_1} \tosimeq \Gamma_1.\Id_{A_0} \tosimeq
  \Gamma_2.\Id_{A_2} \in \McW_\ETT$ and
  $[\refl_1] = [\refl_0] = [\refl_2]$.
  This shows that $\Id$-types and constructors descend into the
  quotient.

  For the $\MsJ$-eliminator, assume there is a map
  $\Delta_1 \xrightarrow{d} \Delta_2$ such that there is
  $w_{1} \colon \Gamma_1.A_0 \tosimeq \Delta_1 \in \McW_\ETT$ and
  $w_{2} \colon \Delta_2 \tosimeq \Gamma_1.A_0.A_0.\Id_{A_0}.C \in
  \McW_\ETT$.
  Further assume $[\pi] [d] = [\refl_0]$ so that
  $\pi \cdot w_2 \cdot d \cdot w_1 \simeq \refl_0$ by
  \Cref{lem:approx-char}.
  Replace $w_2 \cdot d \cdot w_1$ with homotopic $\widetilde{d}$ so
  that $\pi \widetilde{d} = \refl_0$.
  Then by \Cref{def:c-ett-id}, we have
  $\MsJ_{[d]} \coloneqq [\MsJ_{\widetilde{d}}]$.
  Any other
  $\widetilde{d}' \colon \Gamma_1.A_0 \to
  \Gamma_1.A_0.A_0.\Id_{A_0}.C$ obtained in the above manner by making
  different choices must be also such that $[d] = [\widetilde{d}']$.
  Hence, by \Cref{lem:approx-char}, $\widetilde{d}' \simeq \widetilde{d}$,
  from which $\MsJ_{\widetilde{d}'} \simeq \MsJ_{\widetilde{d}}$
  follows by $\MsJ$-elimination.
  This means
  $[\MsJ_{\widetilde{d}'}] \simeq [\MsJ_{\widetilde{d}}]$,
  so the recursor is well-defined.

  By functoriality of the quotient map, the necessary computation
  rules and substitutivity follows.
  For example, we have
  $\MsJ_{[d]} \cdot \refl_{[A]} = [\MsJ_{\widetilde{d}}] \cdot
  [\refl_0] = [\MsJ_{\widetilde{d}} \cdot \refl_0] =
  [\widetilde{d}] = [d]$.
  This completes the verification that $\vbr{\bC}_\ETT$ inherits
  $\Id$-type structures from $\bC$.
\end{proof}

\begin{definition}\label{def:c-ett-pi}
  For each $[\Gamma] \in \vbr{\bC}_\ETT$ and
  $[A] \in \Ty{[\Gamma].[A]}$ and $[B] \in \Ty{[\Gamma].[A]}$, choose
  a representative of $\Gamma$ of $[\Gamma]$ and, by
  \Cref{lem:strict-lifting}, representatives $A \in \Ty\Gamma$ and
  $B \in \Ty{\Gamma.A}$ of $[A]$ and $[B]$ respectively.
  Define the following:
  \begin{itemize}
    \item \emph{$\Pi$-type.} $\Pi([A],[B]) \coloneqq [\Pi(A,B)]$.
    \item \emph{$\lambda$-abstraction.} For each section
    $[b] \colon [\Gamma].[A] \to [\Gamma].[A].[B]$, by
    \Cref{lem:strict-lifting}, a section
    $b \colon \Gamma.A \to \Gamma.A.B$ that represents $[b]$.
    Then, define $\lambda([b]) \coloneqq [\lambda(b)]$.
    \item \emph{Application.} For pairs of sections
    $[k] \colon [\Gamma] \to [\Gamma].\Pi([A],[B])$ and
    $[a] \colon [\Gamma] \to [\Gamma].[A]$, choose, by
    \Cref{lem:strict-lifting}, representatives
    $k \colon \gamma \to \Gamma.\Pi(A,B)$ and
    $a \colon \gamma \to \Gamma.A$ that are also sections.
    Then, define $\app([k],[a]) \coloneqq [\app(k,a)]$.
    \item \emph{Functional extensionality.}
    $\ext_{[A],[B]} \coloneqq [\ext_{A,B}]$.
  \end{itemize}
\end{definition}

\begin{lemma}\label{lem:c-ett-pi}
  The structure given in \Cref{def:c-ett-pi} is well-defined and gives rise to
  $\Pi_\Ext$-structures in $\vbr{\bC}_\ETT$.
\end{lemma}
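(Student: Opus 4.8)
The plan is to follow the template established in the proof of \Cref{lem:c-ett-id}, replacing the appeal to \Cref{lem:id-id} by one to \Cref{lem:id-pi}; indeed, as the preamble to this section notes, all the logical structures descend ``in the same manner.'' There are two obligations: that each piece of data in \Cref{def:c-ett-pi} is independent of the chosen representatives, and that the resulting operations satisfy the $\Pi_\Ext$-axioms. I treat the type former first, then the term-level operations, then the equations.

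For well-definedness of the $\Pi$-type, suppose $(A_1,B_1)$ over $\Gamma_1$ and $(A_2,B_2)$ over $\Gamma_2$ both represent $([A],[B])$, so that there is $w_{AB} \colon \Gamma_1.A_1.B_1 \simeq \Gamma_2.A_2.B_2 \in \McW_\ETT$. By \Cref{lem:grading-proj}(\ref{itm:proj}), applied twice, this equivalence sits in a tower over $w_A \colon \Gamma_1.A_1 \simeq \Gamma_2.A_2$ and then over $w_\Gamma \colon \Gamma_1 \simeq \Gamma_2$, all in $\McW_\ETT$. Using \Cref{lem:free-ett-subst-id} and stability under substitution, I reduce to a common base context by pulling $A_2, B_2$ back along $w_\Gamma$ (exactly as $A_0 \coloneqq w_\Gamma^* A_2$ is formed in the $\Id$-case), so that $w_{AB}$ now lies over $w_A$ over the \emph{identity} on $\Gamma$ in the precise sense demanded by \Cref{def:id-pi}. \Cref{lem:id-pi} then supplies $w_\Pi(w_{AB},w_A) \colon \Gamma.\Pi(A_1,B_1) \simeq \Gamma.\Pi(A_2,B_2)$, which lies in $\McW_\ETT$ by \Cref{def:w-ett}; hence $[\Pi(A_1,B_1)] = [\Pi(A_2,B_2)]$.

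For the term-level operations $\lambda$, $\app$, and $\ext$, well-definedness follows from \Cref{lem:approx-char} together with the strict term lifting of \Cref{lem:strict-lifting}. Given two representative sections $b, b'$ of a single $[b]$, one has $b \approx b'$, witnessed by equivalences in $\McW_\ETT$; I then check that $\lambda$-abstraction carries this to $\lambda(b) \approx \lambda(b')$, the relevant conjugating equivalence being precisely the $w_\Pi$ assembled in \Cref{def:id-pi} from $w_A$ and $w_{AB}$. The same bookkeeping, reading off the explicit formula for $w_\Pi$, shows that $\app$ respects $\approx$ (here one must coordinate the equivalence $w_\Pi$ on $\Gamma.\Pi(A,B)$ with the equivalence $w_A$ on $\Gamma.A$ so that $\app(k,a) \approx \app(k',a')$), and likewise that $\ext$ descends. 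Finally, the $\beta$-computation rule, stability under substitution, and the functional-extensionality axiom all pass to the quotient by functoriality of $[-]$: e.g.\ $\app([\lambda(b)],[a]) = [\app(\lambda(b),a)] = [b[a]]$ in $\vbr{\bC}_\ETT$, since each equality already holds in $\bC$ and $[-]$ preserves the relevant structure, exactly as the $\MsJ$-computation rule was verified in \Cref{lem:c-ett-id}.

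I expect the main obstacle to be the coherence underlying the first two steps. Unlike the $\Id$-case, the $\Pi$-former depends on a type $B$ living in the extended context $\Gamma.A$, so one must verify that the common-base reduction is compatible with the hypothesis of \Cref{def:id-pi} that $w_{AB}$ genuinely lies \emph{over} $w_A$, and that the resulting $w_\Pi$ intertwines $\lambda$-abstraction and $\app$ on the nose (up to homotopy) so that the term-level operations respect $\approx$ rather than merely the objects doing so. Establishing this compatibility is where the real content of the argument lies; once it is in place, the equational axioms are a routine consequence of functoriality.
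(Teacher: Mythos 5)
Your first half---well-definedness of the type former via the common-base reduction $A_0 \coloneqq w_\Gamma^*A_2$, $B_0 \coloneqq (w_\Gamma.A_2)^*B_2$, followed by \Cref{lem:id-pi} and composition with the equivalence $\Gamma_1.\Pi(A_0,B_0) \simeq \Gamma_2.\Pi(A_2,B_2)$ coming from \Cref{lem:free-ett-subst-id} and strict substitution-stability of $\Pi$---is essentially the paper's argument. The gap is in the term-level part. You pose well-definedness of $\lambda$, $\app$, and $\ext$ as the statement that these operations respect $\approx$ across \emph{different} representative contexts, which forces you to show that the conjugating equivalence $w_\Pi$ intertwines $\lambda$-abstraction and application up to homotopy; you then declare this compatibility to be ``where the real content of the argument lies'' and never establish it. As written this is a genuine hole: unlike \Cref{lem:id-id} and \Cref{lem:id-sigma}, which explicitly assert that $w_\Id$ and $w_\Sigma$ carry constructors to constructors, \Cref{lem:id-pi} makes no constructor-preservation claim, so there is nothing in the paper you can cite for this step, and proving it would be a separate computation in the style of the proof of \Cref{lem:id-pi}.

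Moreover, the obligation is self-imposed, and the paper's proof avoids it entirely. In \Cref{def:c-ett-pi} the representatives $\Gamma$, $A$, $B$ are fixed once, and the representative of a section $[b]$ is produced by strict term lifting (\Cref{lem:strict-lifting}) as an honest section $b \colon \Gamma.A \to \Gamma.A.B$ of the \emph{same} projection determined by those fixed representatives. Hence any two admissible lifts $b, b'$ are sections of one and the same map; $[b] = [b']$ together with \Cref{lem:approx-char} (taking the conjugating equivalences to be identities) gives $b \simeq b'$, and $\MsJ$-elimination (congruence of homotopy under the constructors) gives $\lambda(b) \simeq \lambda(b')$, whence $[\lambda(b)] = [\lambda(b')]$. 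The identical argument handles $\app$ and $\ext$, and the computation and substitution rules then pass to the quotient by functoriality of $[-]$, as you indicate. So the cross-context coherence you flag as the main obstacle is not needed: replace it with this same-context reduction and your proof closes.
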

\begin{proof}
  Assume
  $w_{AB} \colon \Gamma_1.A_1.B_1 \tosimeq \Gamma_2.A_2.B_2 \in \McW_\ETT$ so
  that by \Cref{lem:w-ett-grading-proj}\ref{itm:proj}, there exist
  $w_A \colon \Gamma_1.A_1 \tosimeq \Gamma_2.A_2 \in \McW_\ETT$ and
  $w_\Gamma \colon \Gamma_1 \tosimeq \Gamma_2 \in \McW_\ETT$ such that
  $w_{AB} \xrightarrow{(\pi,\pi)} w_A \xrightarrow{(\pi,\pi)} w_\Gamma
  \in \bC^\to$.
  By stability under pullback and \Cref{lem:free-ett-subst-id}, by putting
  $A_0 \coloneqq w_\Gamma^*A_2$ and
  $B_0 \coloneqq (w_\Gamma.A_2)^*B_2$, we have
  $w_\Gamma.\Pi(A_2,B_2) \colon w_\Gamma^*(\Gamma_2.\Pi(A_2,B_2)) =
  \Gamma_1.\Pi(A_0, B_0) \tosimeq \Gamma_2.\Pi(A_2,B_2) \in
  \McW_\ETT$.

  From \Cref{lem:free-ett-subst-id}, it also follows that
  $w_\Gamma.A_2.B_2 \colon \Gamma_1.A_0.B_0 \tosimeq \Gamma_2.A_2.B_2 \in
  \McW_\ETT$ and so in combination with
  $w_{AB} \colon \Gamma_1.A_1.B_1 \tosimeq \Gamma_2.A_2.B_2 \in \McW_\ETT$
  there is
  $w_0 \colon \Gamma_1.A_0.B_0 \tosimeq \Gamma_1.A_1.B_1 \in \McW_\ETT$.
  This allows for \Cref{lem:id-pi} to apply and gives
  $w_\Pi \colon \Gamma_1.\Pi(A_1,B_1) \tosimeq \Gamma_1.\Pi(A_0,B_0)
  \in \McW_\ETT$.
  Hence,
  $\Gamma_1.\Pi(A_1,B_1) \tosimeq \Gamma_1.\Pi(A_0,B_0) \tosimeq
  \Gamma_2.\Pi(A_2,B_2) \in \McW_\ETT$.
  This shows that $\Pi$-types descend into the quotient.

  For $\lambda$-abstraction, assume there is a section
  $[b] \colon [\Gamma_1.A_0] \to [\Gamma_1.A_0.B_0]$.  and there are
  two lifts $b,b' \colon \Gamma_1.A_0 \to \Gamma_1.A_0.B_0$.
  Then, $[b] = [b']$, so \Cref{lem:approx-char} states $b \simeq b'$.
  Hence, by $\MsJ$-elimination, $\lambda(b) \simeq \lambda(b')$.
  This shows that $\lambda$ is well-defined.
  The same argument applies for $\app$ and $\ext$.

  It is also routine to check the necessary substitution and
  computation rules.
\end{proof}

\begin{definition}\label{def:c-ett-sigma}
  For each $[\Gamma] \in \vbr{\bC}_\ETT$ and
  $[A] \in \Ty{[\Gamma].[A]}$ and $[B] \in \Ty{[\Gamma].[A]}$, choose
  a representative of $\Gamma$ of $[\Gamma]$ and, by
  \Cref{lem:strict-lifting}, representatives $A \in \Ty\Gamma$ and
  $B \in \Ty{\Gamma.A}$ of $[A]$ and $[B]$ respectively.
  Define the following:
  \begin{itemize}
    \item \emph{$\Sigma$-type.}
    $\Sigma([A],[B]) \coloneqq [\Sigma(A,B)]$.
    \item \emph{$\pair$-constructor.}
    $\pair_{[A],[B]} \coloneqq [\pair_{A,B}]$.
    \item \emph{$\sigmarec$-eliminator.} For each
    $[C] \in \Ty{[\Gamma].[A].[B].\Sigma_{[A],[B]}}$ choose a
    representative $C \in \Ty{\Gamma.A.B.\Sigma_{A,B}}$.
    Then, for each
    $[d] \colon [\Gamma].[A] \to
    [\Gamma].[A].[B].\Sigma_{[A],[B]}.[C]$ where
    $[\pi][d] = \pair_{[A],[B]}$, choose a representative
    $d \colon \Gamma.A \to \Gamma.A.B.\Sigma_{A,B}.C$ such that
    $\pi \cdot d = \pair_{A,B}$.
    Put $\sigmarec_{[d]} \coloneqq [\sigmarec_{d}]$.
  \end{itemize}
\end{definition}

\begin{lemma}\label{lem:c-ett-sigma}
  The structure given in \Cref{def:c-ett-sigma}
  is well-defined and gives rise to $\Sigma$-type structures in
  $\vbr{\bC}_\ETT$.
\end{lemma}
\begin{proof}
  Assume
  $w_{AB} \colon \Gamma_1.A_1.B_1 \tosimeq \Gamma_2.A_2.B_2 \in \McW_\ETT$ so
  that by \Cref{lem:w-ett-grading-proj}\ref{itm:proj}, there exists
  $w_A \colon \Gamma_1.A_1 \tosimeq \Gamma_2.A_2 \in \McW_\ETT$ and
  $w_\Gamma \colon \Gamma_1 \tosimeq \Gamma_2 \in \McW_\ETT$ such that
  $w_{AB} \xrightarrow{(\pi,\pi)} w_A \xrightarrow{(\pi,\pi)} w_\Gamma
  \in \bC^\to$.
  By stability under pullback and \Cref{lem:free-ett-subst-id}, by putting
  $A_0 \coloneqq w_\Gamma^*A_2$ and
  $B_0 \coloneqq (w_\Gamma.A_2)^*B_2$, we have
  $w_\Gamma.\Sigma(A_2,B_2) \colon w_\Gamma^*(\Gamma_2.\Sigma(A_2,B_2)) =
  \Gamma_1.\Sigma(A_0, B_0) \tosimeq \Gamma_2.\Sigma(A_2,B_2) \in
  \McW_\ETT$.
  Also by stability under pullback, for the constructor,
  $w_\Gamma^*\pair_2 = \pair_0$ for
  $\pair_0$ and $\pair_2$ maps in the slices
  over $\Gamma_1$ and $\Gamma_2$ respectively.
  In particular, this means
  $(w_\Gamma.A_2.B_2, w_\Gamma.\Sigma(A_2,B_2)) \colon \pair_0 \to
  \pair_2 \in \bC^\to$ and hence
  $[\pair_0] = [\pair_2]$.

  From \Cref{lem:free-ett-subst-id}, it also follows that
  $w_\Gamma.A_2.B_2 \colon \Gamma_1.A_0.B_0 \tosimeq \Gamma_2.A_2.B_2 \in
  \McW_\ETT$ and so in combination with
  $w_{AB} \colon \Gamma_1.A_1.B_1 \tosimeq \Gamma_2.A_2.B_2 \in \McW_\ETT$
  there is
  $w_0 \colon \Gamma_1.A_0.B_0 \tosimeq \Gamma_1.A_1.B_1 \in \McW_\ETT$.
  This allows for \Cref{lem:id-sigma} to apply and gives
  $w_\Sigma \colon \Gamma_1.\Sigma(A_1,B_1) \tosimeq \Gamma_1.\Sigma(A_0,B_0)
  \in \McW_\ETT$ such that
  $\pair_1 \xrightarrow{(w_0,w_\Sigma)} \pair_0$.
  Hence,
  $\Gamma_1.\Sigma(A_1,B_1) \tosimeq \Gamma_1.\Sigma(A_0,B_0) \tosimeq
  \Gamma_2.\Sigma(A_2,B_2) \in \McW_\ETT$ and
  $[\pair_1] = [\pair_0] = [\pair_2]$.
  This shows that $\Sigma$-types and constructors descend into the
  quotient.

  For the recursor, assume there is a map
  $\Delta_1.X.Y \xrightarrow{d} \Delta_2.S.D$ such that there is
  $w_{XY} \colon \Gamma_1.A_0.B_0 \tosimeq \Delta_1.X.Y \in \McW_\ETT$ and
  $w_{SD} \colon \Delta_2.S.D \tosimeq \Gamma_1.\Sigma(A_0,B_0).C \in
  \McW_\ETT$.
  Further assume $[\pi] \cdot [d] = [\pair_0]$.
  Then, by \Cref{lem:approx-char}, we have
  $\pi \cdot w_{SD} \cdot d \cdot w_{XY} \simeq \pair_0$.
  Adjust $w_{SD} \cdot d \cdot w_{XY}$ to a homotopic $\widetilde{d}$
  so that $\pi \widetilde{d} = \pair_0$.
  Note that in particular
  $[d] = [d_0] = [\widetilde{d}]$.
  Put
  $\sigmarec_{[d]} \coloneqq [\sigmarec_{\widetilde{d}}]$.
  This is well-defined because any other
  $\widetilde{d}' \colon \Gamma_1.A_0.B_0 \to \Gamma_1.\Sigma(A_0,B_0).C$
  obtained in the above manner by making different choices must be
  also such that $[d] = [\widetilde{d}']$.
  Hence, by \Cref{lem:approx-char}, $\widetilde{d}' \simeq \widetilde{d}$,
  from which $\sigmarec_{\widetilde{d}'} \simeq \sigmarec_{\widetilde{d}}$
  follows by $\MsJ$-elimination.
  This means
  $[\sigmarec_{\widetilde{d}'}] \simeq [\sigmarec_{\widetilde{d}}]$,
  so the recursor is well-defined.

  By functoriality of the quotient map, the necessary computation
  rules and substitutivity follows.
  For example, we have
  $\sigmarec_{[d]} \cdot \pair([A],[B]) = [\sigmarec_{\widetilde{d}}]
  \cdot [\pair_0] = [\sigmarec_{\widetilde{d}} \cdot
  \pair_0] = [\widetilde{d}] = [d]$.
  This completes the verification that $\vbr{\bC}_\ETT$ inherits
  $\Sigma$-type structures from $\bC$.
\end{proof}

\begin{definition}\label{def:c-ett-unit}
  Assume $\bC$ admits $\Unit$-type structures.
  For each $[\Gamma] \in \vbr{\bC}_\ETT$, choose a representative of
  $\Gamma$ of $[\Gamma]$ and define the following:
  \begin{itemize}
    \item \emph{$\Unit$-type.}
    $\Unit_{[\Gamma]} \coloneqq [\Unit_\Gamma]$.
    \item \emph{$\star$-constructor.}
    $\star_{[\Gamma]} \coloneqq [\star_{\Gamma}]$.
    \item \emph{$\unitrec$-eliminator.} For each
    $[C] \in \Ty{[\Gamma].\Unit_{[\Gamma]}}$ choose a
    representative $C \in \Ty{\Gamma.\Unit_{\Gamma}}$.
    Then, for each
    $[d] \colon [\Gamma] \to
    [\Gamma].\Unit_{[\Gamma]}.[C]$ where
    $[\pi][d] = \star_{[\Gamma]}$, choose a representative
    $d \colon \Gamma \to \Gamma.\Unit_\Gamma.C$ such that
    $\pi \cdot d = \star_\Gamma$.
    Put $\unitrec_{[d]} \coloneqq [\unitrec_{d}]$.
  \end{itemize}
\end{definition}

\begin{lemma}\label{lem:c-ett-unit-ty}
  If $\bC$ admits $\Unit$-type structures then the structure given in \Cref{def:c-ett-unit}
  is well-defined and gives rise to $\Unit$-type structures in
  $\vbr{\bC}_\ETT$.
\end{lemma}
\begin{proof}
  Assume $w_\Gamma \colon \Gamma_1 \tosimeq \Gamma_2 \in \McW_\ETT$.
  Then, by \Cref{lem:free-ett-subst-id} and substitutivity of
  $\Unit$ types,
  $w_\Gamma^*(\Gamma_2.\Unit_2) = \Gamma_1.\Unit_1 \tosimeq
  \Gamma_2.\Unit_2 \in \McW_\ETT$.
  Also by substitutivity, we have
  $(w_\Gamma, w_\Gamma.\Unit_2) \colon \star_1 \to \star_2 \in \bC^\to$.
  So the definitions of $\Unit_{[\Gamma]} \coloneqq [\Unit_\Gamma]$
  and $\star_{[\Gamma]} \coloneqq [\star_\Gamma]$ is well-defined.

  Now, if $d \colon \Delta_1 \to \Delta_2$ where
  $w_1 \colon \Gamma_1 \tosimeq \Delta_1$ and
  $w_2 \colon \Delta_2 \tosimeq \Gamma_1.C$ in $\McW_\ETT$ is such that
  $[\pi][d] = [\star_1]$ then
  $\pi \cdot w_2 \cdot d \cdot w_1 \simeq \star_1$ by
  \Cref{lem:approx-char}.
  Replace $w_2 \cdot d \cdot w_1$ with homotopic $\widetilde{d}$ so
  that $\pi\widetilde{d} = \star_1$.
  Then, by \Cref{def:c-ett-unit},
  $\unitrec_{[d]} \coloneqq [\unitrec_{\widetilde{d}}]$.
  Repeating the same argument as in \Cref{lem:c-ett-sigma} using
  \Cref{lem:approx-char} and $\MsJ$-elimination, this is well-defined.

  It is routine to check the necessary substitution and computation
  rules.
\end{proof}

Note that the proofs of
\Cref{lem:c-ett-id,lem:c-ett-pi,lem:c-ett-sigma,lem:c-ett-unit} also
show that the quotient map $[-]$ preserves all logical structures.


\section{Adjointness and Morita Equivalence}\label{sec:c-ett-lift}
With the explicit description of $\vbr{\bC}_\ETT$ for each cellular
$\bC \in \CxlCat_{\ITT+\UIP}$, we are now ready to prove the main result of this
paper, which is a generalisation of \cite[Theoerm
3.2.5]{hofmann:extensional-constructs}.

\begin{lemma}\label{lem:c-ett-extensional}
  $\vbr{\bC}_\ETT \in \CxlCat_\ETT$.
\end{lemma}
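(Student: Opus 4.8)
The plan is to verify that $\vbr{\bC}_\ETT$ satisfies the equality reflection rule of \Cref{def:eq-reflect}; by \Cref{prop:free-ett-cxl-cat}, \Cref{lem:c-ett-id} and \Cref{lem:c-ett-pi} we already know that $\vbr{\bC}_\ETT$ is a contextual category carrying $\Id$- and $\Pi_\Ext$-type structure, so it lies in $\CxlCat_\ITT$ and equality reflection is all that remains. Concretely, I take two sections $[f], [g] \colon [\Gamma] \to [\Gamma].[A]$ in $\vbr{\bC}_\ETT$ together with a type-theoretic homotopy $[e] \colon [f] \simeq [g]$ as in \Cref{def:kl-2.6}, and aim to conclude $[f] = [g]$.

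The first step is a reduction using the characterisation of $\approx$ from \Cref{lem:approx-char}. Since $\id \in \McW_\ETT$, for any pair of parallel maps $p, q \colon \Gamma \to \Gamma.A$ in $\bC$ we may take both vertical legs of the square in \Cref{def:w-ett-maps} to be identities, so that $[p] = [q]$ holds precisely when $p \simeq q$ in $\bC$. Hence it suffices to exhibit representatives of $[f]$ and $[g]$ that are homotopic already in $\bC$.

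The key step is to descend the witness $[e]$ to an honest homotopy in $\bC$. By \Cref{lem:c-ett-id} the quotient map $[-]$ preserves $\Id$-type structure, and as Garner's identity contexts are generated from identity types, $[-]$ likewise preserves identity contexts; thus the codomain of $[e]$ is $[\Xi]$, where $\Xi$ is the identity context of a chosen representative $\Gamma.A$. Because $[-]$ is a trivial fibration by \Cref{lem:strict-lifting}, its strict term lifting property—extended to context extensions in the usual way—lifts $[e]$ to a section $\widehat{e} \colon \Gamma \to \Xi$ in $\bC$ with $[\widehat{e}] = [e]$. Projecting $\widehat{e}$ onto $(\Gamma.A).(\Gamma.A)$ exhibits a homotopy $\widehat{f} \simeq \widehat{g}$ in $\bC$ between parallel sections $\widehat{f}, \widehat{g} \colon \Gamma \to \Gamma.A$, and applying $[-]$ to this projection identifies $[\widehat{f}] = [f]$ and $[\widehat{g}] = [g]$. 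By the reduction above, $\widehat{f} \simeq \widehat{g}$ forces $[\widehat{f}] = [\widehat{g}]$, and therefore $[f] = [\widehat{f}] = [\widehat{g}] = [g]$, as required.

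The main obstacle is precisely this lifting step: one must ensure that a homotopy witnessed only in the quotient comes from a genuine homotopy in $\bC$ whose endpoints have the prescribed equivalence classes. This is what forces me to invoke both the preservation of the identity-context structure (so that the target of $[e]$ is in the image of $[-]$) and the strict lifting of sections along the resulting context extension; once the homotopy lives in $\bC$, equality in $\vbr{\bC}_\ETT$ follows immediately from the construction of $\approx$.
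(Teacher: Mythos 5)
Your proof is correct and reaches the conclusion by the same overall strategy as the paper---reduce to equality reflection (the logical structure being already in place), descend the homotopy witness from the quotient to $\bC$, and conclude via \Cref{lem:approx-char} with identity vertical legs---but the descent step is packaged differently. The paper first uses strict term lifting to choose \emph{section} representatives $a_1,a_2$ of the two classes, then takes an \emph{arbitrary} representative $e$ of the homotopy class, conjugates it by maps of $\McW_\ETT$, and rectifies the composite using homotopy lifting in the fibration category $\bC$ so that it lies strictly over $(a_1,a_2)$. You instead apply the trivial-fibration property of $[-]$ (\Cref{lem:strict-lifting}) once, directly to $[e]$ regarded as a section of the composite projection $[\Xi]\to[\Gamma]$, and read the endpoints off the lift; this reuses \Cref{lem:strict-lifting} as a black box and avoids redoing the rectification by hand, at the price of the (routine but nowhere proved) extension of strict term lifting from single types to context extensions of length greater than one. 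Two glosses in your write-up are harmless but worth flagging. First, sectionhood of $\widehat{e}$ over the composite projection constrains only the first component, so $\widehat{f}$ is a strict section but $\widehat{g}$ need not be---one only gets $\pi\widehat{g}\simeq\id$ from $[\widehat{g}]=[g]$; since your final step needs only that $\widehat{f},\widehat{g}$ are \emph{parallel maps} to invoke the definition of $\approx$ with identity legs, nothing breaks, but ``parallel sections'' overstates what you have. Second, identifying the identity context of $[\Gamma.A]$ computed in $\vbr{\bC}_\ETT$ with $[\Xi]$ rests on the fact, recorded at the end of \Cref{sec:c-ett-logic}, that the quotient map preserves all logical structure, Garner's identity contexts being built from $\Id$-types and substitution; the paper's own proof relies on the same identification.
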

\begin{proof}
  We have already verified that $\vbr{\bC}_\ETT$ is a model of a type theory
  supporting $\Id$-types and extensional $\Pi$-types so it just remains to check
  it models the reflection rule.

  Let there be two sections $[a_i] \colon [\Gamma] \to [\Gamma.A]$ such
  that $[a_1] \simeq [a_2] \in \vbr{\bC}_\ETT$.
  by definition of substitution (\Cref{def:free-ett}) and the $\Id$-type
  (\Cref{def:c-ett-id}), this means there is a factorisation
  $[e] \colon [\Gamma] \to [\Gamma.A.A.\Id_A]$ such that
  \begin{equation*}
    \begin{tikzcd}
      {[\Gamma]} \ar{rd}[swap]{{[e]}} \ar{rr}{{([a_1],[a_2])}} & & {[\Gamma.A.A]} \\
      & {[\Gamma.A.A.\Id_A]} \ar[two heads]{ur}[swap]{{[\pi]}}
    \end{tikzcd}
  \end{equation*}
  First, we find a representative of $([a_1],[a_2])$.
  By strict term lifting, there is a choice of representatives
  $a_i \colon \Gamma \to \Gamma.A$ that are sections.
  And so by well-definedness of substitution pullbacks in
  $\vbr{\bC}_\ETT$ and functoriality of the quotient map,
  $([a_1],[a_2]) = [(a_1,a_2)]$.

  Now, let $e \colon \Gamma_0 \to \Delta$ be any representative of $[e]$ so that
  there are $w_\Gamma \colon \Gamma \tosimeq \Gamma_0 \in \McW_\ETT$ and
  $w_\Delta \colon \Delta \tosimeq \Gamma.A.A.\Id_A \in \McW_\ETT$.
  Then, $[(a_1,a_2)] = ([a_1],[a_2]) = [\pi][e] = [\pi w_\Delta e]$, it
  follows $(a_1,a_2) \simeq \pi \cdot w_\Delta \cdot e \cdot w_\Gamma$.
  By homotopy lifting, there is
  $\widetilde{e} \simeq w_\Delta \cdot e \cdot w_\Gamma$ such that
  $(a_1,a_2) \simeq \pi \widetilde{e}$.
  In particular, $a_1 \simeq a_2 \in \bC$ and so by defintion
  $[a_1] = [a_2] \in \vbr{\bC}_\ETT$.
\end{proof}

\begin{lemma}\label{lem:c-ett-unit}
  If $\bC$ is $\McI^\CxlCat_{\ITT+\UIP}$-cellular then
  $\bC \xrightarrow{[-]} \vbr{\bC}_\ETT$ is the unit of the adjunction
  $\vbr{-}^\circ_\ETT \dashv \abs{-}$ at $\bC$.
\end{lemma}
\begin{proof}
  We check the universal property of the unit.
  Let $F \colon \bC \to \abs{\bE} \in \CxlCat_{\ITT+\UIP}$ where
  $\bE \in \CxlCat_\ETT$.
  The goal is to show that there is a unique factorisation
  $\abs{\vbr{\bC}_\ETT} \xrightarrow{\widetilde{F}} \abs{\bE}$ through
  $\bC \xrightarrow{[-]} \abs{\vbr{\bC}_\ETT}$.
  \begin{equation*}
    \begin{tikzcd}
      \bC \ar[]{r}{[-]} \ar[]{rd}[swap]{F}
      & \abs{\vbr{\bC}_\ETT} \ar[dashed]{d}{\exists!\widetilde{F}} \\
      & \abs{\bE}
    \end{tikzcd}
  \end{equation*}

  If such $\widetilde{F}$ exists then clearly on objects
  $\widetilde{F}[\Gamma] = F\Gamma$ and on maps $\widetilde{F}[f] = Ff$.
  So uniqueness follows automatically after showing $F$ passes into
  the quotient.

  The fact that $F$ passes into the quotient (so that $\widetilde{F}$ exists)
  follows by \Cref{lem:ext-ker}.
  In particular the assignment of objects objects
  $[\Gamma] \mapsto F\Gamma$ is well-defined, because if
  $[\Gamma_1] = [\Gamma_2]$ then there is
  $w \colon \Gamma_1 \tosimeq \Gamma_2 \in \McW_\ETT$, from which it
  follows by \Cref{lem:ext-ker} that $Fw = \id$, so
  $F\Gamma_1 = F\Gamma_2$.
  Moreover, $[f] \mapsto F[f]$ is also well-defined because if
  $f_1,f_2$ are both representatives of $[f]$ then
  $f_1w_1 \simeq w_2f_2$ for $w_1,w_2 \in \McW_\ETT$ and so
  $Ff_1 = Ff_1 \cdot Fw_1 = F(f_1w_1) = F(f_2w_2) = Ff_2 \cdot Fw_2
  = Ff_2$ using \Cref{lem:ext-ker} and reflection in
  $\bE$.
  By functoriality of $F$ and $[-]$, it follows that this definition
  of $\widetilde{F}$ as above is indeed functorial.

  Next, note that $\widetilde{F} \colon \abs{\vbr{\bC}_\ETT} \to \abs{\bE}$
  is a map of contextual categories because it:
  \begin{itemize}
    \item \emph{Preserves the terminal object and grading.}
    This is because $F$ and $[-]$ do.
    For example,
    $\widetilde{F}1 = \widetilde{F}[1] = F1 = 1$.
    \item \emph{Preserves the father object and projection.}
    This is because $F$ does and context projection passes into the
    quotient as in \Cref{lem:w-ett-grading-proj}\ref{itm:proj}.
    So
    $\widetilde{F}(\ft{[\Gamma]}) = \widetilde{F}[\ft\Gamma] = F(\ft
    \Gamma) = \ft{F\Gamma} = \ft{\widetilde{F}[\Gamma]}$ and
    $\widetilde{F}\pi_{[\Gamma]} = \widetilde{F}[\pi_\Gamma] =
    F\pi_\Gamma = \pi_{F\Gamma} = \pi_{\widetilde{F}[\Gamma]}$.
    \item \emph{Is substitutive.}
    Like in the above cases, this is because $[F]$ and $[-]$ are.
  \end{itemize}

  Finally, we note that $\widetilde{F}$ preserves the logical structures.
  For example,
  $\widetilde{F}\Sigma([A],[B]) = \widetilde{F}[\Sigma(A,B)] =
  F\Sigma(A,B) = \Sigma(FA,FB) =
  \Sigma(\widetilde{F}[A],\widetilde{F}[B])$ and similarly for
  constructors and recursors.
  %

  This shows that the unique factorisation $\widetilde{F}$ as required
  does indeed exist and is given by the above definition.
\end{proof}

\begin{theorem}
  The free-forgetful adjunction
  \begin{tikzcd}[cramped]
    \CxlCat_{\ITT+\UIP}
    \ar[r, {yshift=3}, "\vbr{-}_\ETT^\circ"]
    \ar[r, phantom, "\bot" {font=\tiny}]
    & \CxlCat_{\ETT}
    \ar[l, {yshift=-3}, "\abs{-}"]
  \end{tikzcd}
  satisfies the conditions of \Cref{def:morita} so that $\ITT+\UIP$ and $\ETT$
  are Morita equivalent.
\end{theorem}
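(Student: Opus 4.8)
The plan is to verify the two conditions of \Cref{def:morita} for the free--forgetful adjunction $\vbr{-}_\ETT \dashv \abs{-}$ established above, taking the left adjoint to be $L = \vbr{-}_\ETT$ and the right adjoint to be $R = \abs{-}$. The unit of this adjunction at an object $\bC$ is the quotient map $[-] \colon \bC \to \abs{\vbr{\bC}_\ETT}$.

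For the \emph{weak equivalence} condition, I would simply invoke \Cref{lem:strict-lifting}, which shows that the unit $[-]$ is a trivial fibration; by the characterisation of trivial fibrations \cite[Proposition 3.14]{kapulkin-lumsdaine-18}, it is in particular a weak equivalence. Hence the unit is a weak equivalence at \emph{every} object of $\CxlCat_{\ITT+\UIP}$, and \emph{a fortiori} at every cofibrant one, which is all that \Cref{def:morita} asks for.

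For the \emph{Quillen adjunction} condition, I would show that $\abs{-}$ preserves fibrations and trivial fibrations. The decisive observation is that $\abs{-}$ is the identity on underlying objects and maps, so for a fixed $\bE \in \CxlCat_\ETT$ the contextual category $\abs{\bE}$ carries exactly the same contextual and $\Id$-type structure as $\bE$. Since the type-theoretic homotopies of \Cref{def:kl-2.6}, and therefore the weak equivalences of \Cref{def:ctx-wequiv}, are computed entirely from this structure, a map $F \colon \bE_1 \to \bE_2$ and its image $\abs{F}$ exhibit literally the same strict- and homotopy-lifting behaviour. It follows that $F$ satisfies \Cref{def:ctx-triv-fib} (respectively \Cref{def:ctx-fib}) exactly when $\abs{F}$ does, so $\abs{-}$ preserves trivial fibrations and fibrations.

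The step calling for the most care is the claim that the homotopical notions coincide on $\bE$ and $\abs{\bE}$. For this one checks that \Cref{def:ctx-fib,def:ctx-triv-fib} are formulated solely in terms of the contextual structure and the identity contexts of \cite[Proposition 3.3.1]{garner09}, both of which $\abs{-}$ preserves on the nose. Granting this, neither the strict lifting properties nor the homotopy lifting properties are disturbed by passage through $\abs{-}$; both conditions of \Cref{def:morita} then hold, and the adjunction is a Morita equivalence.
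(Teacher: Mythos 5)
Your proposal is correct and follows essentially the same route as the paper's proof: the weak equivalence condition is obtained from \Cref{lem:strict-lifting} (the unit $[-]$ is a trivial fibration, hence a weak equivalence), and the Quillen adjunction condition holds because the forgetful functor $\abs{-}$, being the identity on objects and maps, preserves fibrations and acyclic fibrations. The only difference is that you spell out why this preservation is automatic, which the paper compresses into the word ``clearly.''
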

\begin{proof}
  By \Cref{lem:c-ett-unit}, we indeed have an adjunction
  $\vbr{-}^\circ_\ETT \dashv \abs{-}$.
  The Quillen adjunction condition of \Cref{def:morita} follows because the
  forgetful functor $\abs{-} \colon \CxlCat_\ETT \to \CxlCat_{\ITT+\UIP}$
  clearly preserves fibrations and acyclic fibrations.
  By \Cref{lem:c-ett-unit} and \Cref{lem:strict-lifting} the weak equivalence
  condition of the unit as per \Cref{def:morita} holds for all
  $\McI^\CxlCat_{\ITT+\UIP}$-cellular models.
  To conclude the unit is a weak equivalence for all cofibrant models, note that
  $\McI^\CxlCat_{\ITT+\UIP}$ generates the cofibrations, so each cofibrant model
  $\bD \in \CxlCat_{\ITT+\UIP}$ has a cellular replacement
  $\bD' \xrightarrow{\simeq} \bD$.
  Because $\vbr{-}^\circ_\ETT \dashv \abs{-}$ is a Quillen adjunction and all
  objects of $\CxlCat_\ETT$ is fibrant by \cite[Proposition
  3.13]{kapulkin-lumsdaine-18}, it follows that
  $\abs{\vbr{\bD'}^\circ_\ETT} \xrightarrow{\simeq} \abs{\vbr{\bD}^\circ_\ETT}$
  is a weak equivalence.
  The required conclusion that the unit $\eta_\bD$ is a weak equivalence then
  follows from 2-out-of-3 and naturality of the unit.
\end{proof}


\section{Conclusion}\label{sec:conclusion}
%
%
In this paper, we generalized the well-known result of Hofmann
\cite{hofmann:extensional-concepts,hofmann:extensional-constructs} that $\ETT$
is a conservative extension of $\ITT+\UIP$ by showing that these theories are in fact Morita equivalent in the sense of \cite{isaev18}.
%
This strengthens Hofmann's result, as instead of working solely with the initial model we show his result to be true in all cofibrant extensions (i.e., extensions that do not impose new definitional equalities) thereof.
Such extensions previously had to be handled one at a time, whereas the framework of Morita equivalence allows us to prove the result once and for all.
Our work further clarifies the relation between the notion of Morita equivalence, introduced in \cite{isaev18}, and the notion of conservative extensions of dependent type theories, studied in \cite{hofmann:extensional-concepts,hofmann:extensional-constructs}.

Based on results developed in this paper, there are three natural topics for further exploration:
\begin{enumerate}
  \item \label{itm:ext-1} \emph{Constructive proof of Morita equivalence.}
  Just like Hofmann's original proof \cite{hofmann:extensional-concepts,hofmann:extensional-constructs} and Bocquet's generalization \cite{bocquet:coherence}, our proof relies on the axiom of choice to work with the constructions of \Cref{sec:c-ett,sec:c-ett-logic}.
  Bocquet speculates \cite[\S2.1.(2)]{bocquet:coherence} that some instances of non-constructive principles could be avoided however further investigation into this question is needed.
  \item \label{itm:ext-2} \emph{Modular approach to the construction.}
  In the present paper, we rely on the notion of the left semi-model structure on the category of contextual categories with appropriate logical constructors \cite{kapulkin-lumsdaine-18} to be able to define Morita equivalence.
  However, our constructions are elementary and do not take advantage of the left semi-model structure, nor do they rely on the fibration category structure that each individual contextual category carries \cite{akl15}.
  A potential line of research is therefore to investigate whether our construction could be refactored to make it depend solely on the homotopical structure of the category of models of type theory and/or each model alone.
  This in particular raises the question of the homotopy-theoretic meaning of the ``extensional collapse.''
  Indeed, our construction of $\vbr{\bC}_\ETT$ shares similarities with the construction of the homotopy category of fibration category but the two differ in several ways.
  A further line of investigation would be to study this behavior.
  \item \label{itm:ext-3} \emph{Examples and further extensions of Morita theory.}
  Roughly speaking, the notions of conservativity and Morita equivalence of two
  theories express the fact that the two theories have the same ``logical power''.
  When defining logical structures in type theory, it is common to use
  different presentations for constructions which one expects to give rise to
  equivalent logical theories.
  Isaev \cite{isaev18} shows for instance the the theory with the unit type is equivalent to that with a contractible type (in the sense of homotopy type theory).
  One can further investigate other examples of Morita equivalences between dependent type theories.
  Somewhat independently, the notion of Morita equivalence could be generalized to other type theories where it could serve as a potential framework for phrasing the conservativity results between different cubical type theories and the conservativity of those over homotopy type theory.
\end{enumerate}

%


\printbibliography

\end{document}